\documentclass[11pt, reqno]{amsart}

\makeatletter

\def\subsection{\@startsection{subsection}{2}%
	{\parindent}{.5\linespacing}{-.5em}%
	{\normalfont\itshape\/}}
\def\subsubsection{\@startsection{subsubsection}{3}%
	{\parindent}{.5\linespacing}{-.5em}%
	{\normalfont\itshape\/}}
\def\appendix{\par\c@section\z@ \c@subsection\z@
	\gdef\theHsection{\Hy@AlphNoErr{section}}%
\let\sectionname\appendixname{}
\def\thesection{{\upshape\@Alph\c@section}}}

\newtheoremstyle{plain}{0.5\linespacing}{0.5\linespacing}{\itshape}%
	{\parindent}{\scshape}{.}{0.5em}%
	{\thmname{#1}\thmnumber{ #2}\thmnote{\normalfont{} (#3)}}
\newtheoremstyle{definition}{0.5\linespacing}{0.5\linespacing}%
	{\upshape}{\parindent}%
	{\itshape}{.}{0.5em} 
	{\thmname{#1}\thmnumber{ #2}\thmnote{\normalfont{} (#3)}}
\newtheoremstyle{remark}{0.5\linespacing}{0.5\linespacing}%
	{\upshape}{\parindent}%
	{\itshape}{.}{0.5em} 
	{\thmname{#1}\thmnumber{ #2}\thmnote{\normalfont{} (#3)}}

\renewenvironment{proof}[1][\proofname]{\par 
	\pushQED{\qed}%
	\normalfont\topsep6\p@\@plus6\p@\relax
	\trivlist%
	\item[\hskip\labelsep\hskip\parindent%
	\itshape%
	#1\@addpunct{.}]\ignorespaces%
	}{ 
	\popQED\endtrivlist\@endpefalse%
}
\makeatother
\allowdisplaybreaks

\usepackage{amsmath, amssymb, amsthm}
\usepackage{enumitem}
\makeatletter
\renewenvironment{abstract}{%
  \global\setbox\abstractbox=\vtop \bgroup
    \normalfont\Small
    \list{}{\labelwidth\z@
      \leftmargin1pc \rightmargin\leftmargin 
      \listparindent\normalparindent \itemindent\z@
      \parsep\z@ \@plus\p@
      
    }%
    \item[\hskip\labelsep\scshape\abstractname.]%
}{%
  \endlist\egroup
  \ifx\@setabstract\relax \@setabstracta \fi
}
\makeatother

\usepackage{xcolor}
\definecolor{cite}{rgb}{0.30,0.60,1.00}
\definecolor{url}{rgb}{0.00,0.00,0.80}
\definecolor{link}{rgb}{0.40,0.10,0.20}

\usepackage[pdfusetitle,colorlinks,linkcolor=link,urlcolor=url,citecolor=cite,breaklinks,bookmarksdepth=3,bookmarksopen=true]{hyperref}
\usepackage[top=1.75in, headsep=0.25in, left=1.65in, right=1.7in,bottom=1.275in]{geometry}
\usepackage{url}
\usepackage{graphicx}
\usepackage{caption}
\usepackage{subcaption}
\usepackage{mathdots}
\usepackage{tikz-cd}
\usepackage{array}

\usepackage{mathtools}
\usepackage{physics}
\usepackage{float}
\usepackage[mathcal]{eucal}
\usepackage{microtype}
\usepackage{pagesel}
\usepackage{quiver}
\usepackage{aliascnt}
\usepackage[capitalise]{cleveref}

\theoremstyle{plain}

\newaliascnt{theorem}{counter}
\newtheorem{theorem}[theorem]{Theorem}
\aliascntresetthe{theorem}

\newaliascnt{claim}{counter}

\aliascntresetthe{claim}

\newaliascnt{proposition}{counter}
\newtheorem{proposition}[proposition]{Proposition}
\aliascntresetthe{proposition}

\newaliascnt{lemma}{counter}
\newtheorem{lemma}[lemma]{Lemma}
\aliascntresetthe{lemma}

\newaliascnt{conjecture}{counter}

\aliascntresetthe{conjecture}

\newaliascnt{corollary}{counter}
\newtheorem{corollary}[corollary]{Corollary}
\aliascntresetthe{corollary}

\theoremstyle{definition}

\newaliascnt{definition}{counter}
\newtheorem{definition}[definition]{Definition}
\aliascntresetthe{definition}

\newaliascnt{notation}{counter}

\aliascntresetthe{notation}

\newaliascnt{example}{counter}
\newtheorem{example}[example]{Example}
\aliascntresetthe{example}

\theoremstyle{remark}

\newaliascnt{remark}{counter}
\newtheorem{remark}[remark]{Remark}
\aliascntresetthe{remark}

\newcommand{\nNatural}{\mathbb{N}}
\newcommand{\zIntegers}{\mathbb{Z}}
\newcommand{\qRationals}{\mathbb{Q}}
\newcommand{\rReal}{\mathbb{R}}
\newcommand{\rRealPos}{\rReal_{+}}
\newcommand{\cComplex}{\mathbb{C}}

\renewcommand{\abs}[1]{\left|#1\right|}

\newcommand{\comp}{\mathbin{\circ}}

\let\setminusaux\setminus{}	
\renewcommand*{\setminus}{\scalemath{\mathrel}{0.7}{\setminusaux}}

\newcommand{\st}{\mid}
\newcommand{\restrict}[2]{{#1}|_{#2}}

\DeclareMathOperator{\graph}{graph}
\newcommand{\sign}{\operatorname{sign}}
\renewcommand{\exp}{\operatorname{exp}}

\renewcommand{\leq}{\leqslant}
\renewcommand{\geq}{\geqslant}

\let\tmp\epsilon{}
\let\epsilon\varepsilon{}
\let\varepsilon\tmp{}

\let\tmp\phi{}
\let\phi\varphi{}
\let\varphi\tmp{}

\let\emptyset\varnothing

\newcommand{\structure}{\mathcal{S}}
\newcommand{\RrPfaff}{\rReal_{\mathrm{rPfaff}}}
\newcommand{\Rexp}{\rReal_{\exp}}
\newcommand{\Rre}{\rReal^{\operatorname{RE}}}
\newcommand{\Rexpsin}{\Rre_{\exp}}
\newcommand{\Ran}{\rReal_{\mathrm{an}}}
\newcommand{\Ranexp}{\rReal_{\mathrm{an},\exp}}
\newcommand{\Ralg}{\rReal_{\mathrm{alg}}}

\newcommand{\s}{\raisebox{.5ex}{\scalebox{0.6}{\#}}}
\newcommand{\so}{\s\kern-.02em{}o}
\NewDocumentCommand{\format}{sO{F}}{
	\IfBooleanTF{#1}
		{#2, \ell}
		{#2}
	}
\newcommand{\degree}[1][D]{#1}
\NewDocumentCommand{\FD}{sooo}{
	\IfBooleanTF{#1}								
		{											
			\IfNoValueTF{#4}
			{\mathcal{S}_{\order[#2]{1},\poly_{#2}\!\qty(#3)}}
			{
				{\mathcal{S}^{#4}_{\order[#2]{1},\poly_{#2}\!\qty(#3)}}
			}
		}
		{											
		\IfNoValueTF{#2}
			{\mathcal{S}}
			{
				\IfNoValueTF{#4}
				{
					{\mathcal{S}_{#2,#3}}
				}
				{\mathcal{S}^{#4}_{#2,#3}}
				}		
		}
}
\NewDocumentCommand{\polyfd}{oo}{
	\IfValueTF{#1}
		{\poly_{\format[#1]}\qty(\degree[#2])}
		{\poly_{\format}\qty(\degree)}
}

\newcommand{\cell}[1][C]{\mathcal{#1}}
\newcommand{\hyperbolicParameter}[1]{\{#1\}}

\newcommand{\disc}[1][\relax]{
	\ifx\relax#1 
		D
	\else
		D\qty(#1)
	\fi
}

\NewDocumentCommand{\puncDisc}{so}{
	\IfBooleanTF{#1}
		{D_{\infty}}
		{D_{\circ}}
	\IfNoValueF{#2}{
		\qty(#2)
		}
}

\NewDocumentCommand{\annulus}{oo}{
	\IfNoValueTF{#1} 	
		{A} 			
		{A\qty(#1,#2)} 	
}
\RenewDocumentCommand{\circle}{o}{
	\IfNoValueTF{#1}{S}{S\qty(#1)}	
}

\NewDocumentCommand{\ext}{smm}{
	\IfBooleanTF{#1}		
		{\qty(#2){}^{#3}} 	
		{#2^{#3}}			
}
\NewDocumentCommand{\hExt}{smm}{
	\IfBooleanTF{#1}			
		{\qty(#2){}^{\hyperbolicParameter{#3}}} 
		{#2^{\hyperbolicParameter{#3}}}			
}
\newcommand{\initial}[3]{#1_{#2..#3}}
\NewDocumentCommand{\nuCover}{mo}{
	\IfNoValueTF{#2}
		{{#1}_{\times\nu}}
		{{#1}_{\times#2}}
}

\newcommand{\poly}{\operatorname{poly}}
\newcommand{\polyl}{\poly_{\ell}}
\newcommand{\varX}{\mathbf{x}}
\newcommand{\varZ}{\mathbf{z}}

\NewDocumentCommand{\polydisc}{soo}{
	\IfBooleanTF{#1}					
		{\Delta_{#2}\times\Delta_{#3}}	
		{\IfNoValueTF{#2}				
			{\Delta}
			{\Delta_{#2}}
		}
}
\RenewDocumentCommand{\order}{som}{
	\IfBooleanTF{#1}
		{
		\IfNoValueTF{#2}
			{\Omega\qty(#3)}
			{\Omega_{#2}\qty(#3)}
		}	
		{
		\IfNoValueTF{#2}
			{O\qty(#3)}
			{O_{#2}\qty(#3)}
		}	
}
\NewDocumentCommand{\ball}{ooo}{
	B
	\IfValueT{#1}
		{
			\qty(\IfValueTF{#2}
					{#1, #2}
					{#1}
				\IfValueT{#3}
					{; #3})
		}
}
\NewDocumentCommand{\diam}{mo}
	{
		\operatorname{diam}\qty(#1
		\IfValueT{#2}
			{; #2})
	}
\NewDocumentCommand{\dist}{mmo}
	{
		\operatorname{dist}\qty(#1,#2
		\IfValueT{#3}
			{\,; #3})
	}
\NewDocumentCommand{\latticeComplement}{o}{
	\IfNoValueTF{#1}
		{\cComplex\setminus\zIntegers^2}
		{\cComplex\setminus{\! #1}\zIntegers^2}
}

\newcommand{\LE}{\textrm{LE}}
\newcommand{\LA}{\textrm{L$\structure$}}

\makeatletter
\newdimen\scalemath@axis{}
\newcommand*{\scalemath}[3]{%
  #1{%
    \mathpalette{\scalemath@aux{#2}}{#3}%
  }%
}
\newcommand*{\scalemath@aux}[3]{%
  \begingroup
    \everyvbox{}%
    \settoheight\scalemath@axis{$#2\vcenter{}$}%
    \raisebox{\scalemath@axis}{%
      \scalebox{#1}{%
        \raisebox{-\scalemath@axis}{%
          $\m@th#2#3$%
        }%
      }%
    }%
  \endgroup
}
\makeatother 

\title[Log--exp preparation in sharply o-minimal structures]{Logarithmic--exponential preparation in sharply o-minimal structures}

\author{Gal Binyamini}
\address{Weizmann Institute of Science, Rehovot, Israel}
\email{gal.binyamini@weizmann.ac.il}

\author{Oded Carmon}
\address{Weizmann Institute of Science, Rehovot, Israel}
\email{oded.carmon@weizmann.ac.il}

\author{Dmitry Novikov}
\address{Weizmann Institute of Science, Rehovot, Israel}
\email{dmitry.novikov@weizmann.ac.il}

\date{\today}

\begin{document}

\begin{abstract}
  We develop a complex-analytic approach to the model theory of the (real) unrestricted exponential.
  As a consequence we derive sharp forms of many of the foundational results for the structure ${\mathbb R}^\text{RE}_\text{exp}$ (and more general structures).
  In particular we establish sharp o-minimality, a sharp form of Wilkie's theorem of the complement, a sharp form of Wilkie's conjecture and a sharp form of piecewise definability by terms.
  Our approach is based on a complexification of the LE-preparation theorem of Lion--Rolin.

  We also develop a parallel complex theory for ${\mathbb R}_\text{an,exp}$, proving for example that the rational points of height $H$ on a nowhere-dense definable set can be interpolated by an algebraic hypersurface of degree $\text{poly}(\log H)$.
  This generalizes a theorem of Cluckers--Pila--Wilkie who proved the same statement for ${\mathbb R}_\text{an}^\text{pow}$.   
\end{abstract}

\maketitle

\addtocontents{toc}{\protect\setcounter{tocdepth}{1}}

{\small \tableofcontents}

\section{Introduction}
\subsection{Background and main results}
\subsubsection{Sharply o-minimal structures}

Sharply o-minimal structures are a framework for arithmetically tame geometry introduced by Binyamini--Novikov--Zak in~\cite{BinyaminiNovikovZak2022}.
This framework refines the notion of an o-minimal structure by replacing its characteristic qualitative finiteness properties by effective quantitative bounds in terms of the logical complexity of definable sets.
These bounds are moreover polynomial with respect to the \emph{degrees} of the relevant sets, which are analogous to the degree of an algebraic variety, and the corresponding polynomials depend only on the \emph{format} of the sets, analogous to the ambient dimension.

It is conjectured that sets and functions appearing naturally in algebraic and arithmetic geometry are definable in sharply o-minimal structures.
Until now, the only known examples of sharply o-minimal structures were the structure $\RrPfaff$, generated by the restriction to bounded boxes of \emph{Pfaffian functions} (see~\cite[Section 1.5.3]{BinyaminiNovikovZak2022}), as well as reducts of this structure.

A corollary of our first main theorem (\Cref{thm: S exp so min}) is the following.
Let $\Rre=\rReal({\restrict{\sin}{[0,1]},\restrict{\exp}{[0,1]}})$ be the expansion of the real field generated by the restrictions of the sine and exponential functions, and let $\Rexpsin$ be its expansion by the graph of the unrestricted real exponential function.

{\renewcommand{\thetheorem}{A}
\begin{theorem}\label{thm:A}
	The structure $\Rexpsin$ is sharply o-minimal.
	Moreover, one has an effective piecewise description of definable functions by terms in a suitable language.
\end{theorem}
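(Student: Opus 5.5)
Theorem~\ref{thm:A} is the special case $\structure=\Rre$ of \Cref{thm: S exp so min}. I would therefore prove a general statement: if $\structure$ is a sharply o-minimal structure with a suitable complex-analytic cell decomposition theory (sharp cellular / cylindrical preparation with complex cells and hyperbolic extensions $\hExt{\cell}{\rho}$), then so is $\structure_{\exp}$. I would then check that $\Rre$ satisfies the hypotheses. For the second step I would use the known sharp o-minimality of $\RrPfaff$ and its complex cellular structure theorem. Restricted $\sin$ and $\exp$ are restricted Pfaffian and extend holomorphically to complex neighbourhoods of bounded boxes, so $\Rre$ is a reduct of $\RrPfaff$ with the needed complex cells.

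The core of the argument is a complexified Lion--Rolin LE-preparation theorem. Consider a function $f$ given by an LE-term built from $\structure$-definable functions, $\exp$ and $\log$. I would show that over a complex cell $\cell$ of format and degree $\poly_{\format}(\degree)$, $f$ can be written as
\[
f(\varX,x_n)=a(\varX)\,\exp\bigl(\phi(\varX,x_n)\bigr)\,\bigl(x_n-\xi(\varX)\bigr)^{\lambda}\,U(\varX,x_n),
\]
where $U$ is a unit bounded and nonvanishing on a hyperbolic extension of the cell. The proof is by induction on term complexity, with the preparation extending holomorphically to $\hExt{\cell}{\rho}$. The key steps, in order, are:
\begin{enumerate}[label=(\roman*)]
\item Prepare the $\structure$-definable atoms using the complex cell theory of $\structure$. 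Monomialization on annuli and punctured discs follows from Laurent expansions plus the hyperbolic extension.
\item Close under $\exp$. Split the exponent into its Laurent principal part $\phi$, which becomes the exponential factor, and a bounded remainder, which is absorbed into $U$.
\item Close under $\log$. On a prepared function one uses $\log(a e^{\phi}(x-\xi)^\lambda U)=\log a+\phi+\lambda\log(x-\xi)+\log U$. Here $\log U$ is holomorphic because $U$ stays in a sector away from $0$ on the extension.
\end{enumerate}
Throughout, every refinement of cells must cost only polynomially in the degree. To keep this under control I would track, with each step, the number of cells and the degrees of the centres $\xi$ and the coefficients.

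From preparation I would derive a sharp theorem of the complement. Fibres of prepared functions in the last variable have a number of zeros bounded polynomially; this follows by counting zeros of $e^{\phi}(x-\xi)^\lambda U$ using the argument principle and the boundedness of $U$ on the extension. This gives sharp cell decompositions compatible with $\exp$-definable sets. One then proceeds by induction on dimension, showing that projections of such sets admit decompositions of polynomial complexity, so the class is closed under complement and projection with sharp bounds; this is the sharp o-minimality. Finally, the effective piecewise description by terms follows directly from the preparation: on each cell, a definable function agrees with the term $a\,e^{\phi}(x-\xi)^\lambda U$, whose pieces are again terms of bounded complexity.

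The main obstacle is step~(ii) with sharp bounds. The exponent $\phi$ may grow on a punctured disc near $\xi$ or near $\infty$, so $\exp(\phi)$ has an essential singularity there. One must then refine the cells so that $\operatorname{Re}\phi$ is controlled, namely confined to sectors, while keeping the number of new cells polynomial. I would first try Laurent-domination lemmas: on a hyperbolic extension, a single dominant monomial of $\phi$ controls the remainder. This would reduce the problem to finitely many sectorial cuts, whose number is bounded by the degree.
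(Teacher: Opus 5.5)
You correctly reduce to the case $\structure=\Rre$ of \Cref{thm: S exp so min}. The core of your plan, however, has a genuine gap. Consider the class of expressions $a(\varX)\,e^{\phi}(x_n-\xi)^{\lambda}U$, with $\phi$ a Laurent principal part and $U$ a holomorphic unit on an extension of a complex cell in the \emph{original} coordinates. This class is not closed under $\log$ or $\exp$, so the induction in (ii)--(iii) does not close.

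In (iii), the term $\lambda\log(x_n-\xi)$ is not of the prepared form. It is not even single-valued on the annulus, punctured-disc or disc-complement fibres your cells must use. Concretely, take $f(x,y)=\log y$ on $\{y>e\}$. For fixed $x$, some piece of any finite decomposition has fibres unbounded above. An expansion $\log y=a\,e^{\phi}(y-\xi)^{\lambda}U$ there, with $\phi$ a polynomial in $y$ (the principal part at $\infty$), would give $\log\log y=\log\abs{a}+\operatorname{Re}\phi(y)+\lambda\log\abs{y-\xi}+O(1)$. The right-hand side is bounded, of order $\log y$, or of polynomial growth, so no such expansion exists.

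In (ii), the exponent is itself a prepared function containing a factor $e^{\phi'}$ (think of $\exp(\exp(y))$), so it has no Laurent principal part. The ``sectorial cuts'' you propose for the essential singularity are not an argument; you identify this as the main obstacle but leave it open.

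Zero counting via the argument principle is also not where sharpness comes from. A fully prepared function simply has the sign of a function of the base. The real difficulty is keeping the number of pieces polynomial in the number $k$ of functions as well as in $D$, because the induction applies the decomposition to many subterms at once.

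The missing idea is the log--exp scale. The paper never complexifies $\exp$ of an unbounded argument, or $\log$ near its singularity.
\begin{itemize}
\item The maximal exponential subtrees $\exp(s_{i,q})$ and the nested logarithms $y_{j+1}=\log\abs{y_j}-\theta_{j+1}$ are promoted to new real coordinates.
\item The $\structure$-preparation theorem, which is where the complex-cell theory of $\structure$ enters, is applied to the $\structure$-definable functions $G_i$ on this auxiliary space.
\item The resulting expansions are pulled back along maps $T_i$, one per function.
\end{itemize}
The key dichotomy is on the centre $\theta_i$:
\begin{itemize}
\item If $\theta_i\not\equiv0$, then $\frac23<\exp(s_{i,q})/\theta_i<2$. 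So $\exp(s_{i,q})=\exp(s_{i,q}-\log\theta_i)\cdot\theta_i$ with a restricted exponential, which lowers the syntactic complexity $(e,\#\mathcal{E},\ell,\#\mathcal{LE})$.
\item If $\theta_i\equiv0$, one factors out $\exp(\lambda_i s_{i,q})$ with $s_{i,q}$ of lower exponential level, and $\log$ of such a product simplifies.
\end{itemize}
The purely logarithmic part is handled by $\LA$-preparation, by induction on logarithmic depth, giving the form $\abs{y_0}^{\lambda_0}\cdots\abs{y_\ell}^{\lambda_\ell}\,a(x)\,u$.

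Polynomial dependence on $k$ is kept by working with $\LE$-cylinders, since the intersection of two is again a single cylinder. Common refinements are formed by applying \Cref{thm: LE cylinder decomposition} inductively to pairwise differences of walls. The FD-filtration is then defined as unions of polynomially many cylinders of bounded tree-complexity. Weak-sharp o-minimality together with sharp cell decomposition then give \Cref{thm:A} via \cite[Proposition 1.18]{BinyaminiNovikovZak2022}.
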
}
More precisely, we construct in \Cref{sec: so min} a \emph{sharply o-minimal FD-filtration} for $\Rexpsin$.
This is an assignment of a format $\format$ and degree $\degree$ to each definable set in $\Rexpsin$, compatible in a suitable sense with the structure operations, such that definable subsets of $\rReal$ of format $\format$ and degree $\degree$ have at most $\polyfd$ connected components.
Every definable function $f$ in $\Rexpsin$ is piecewise given by \emph{LE-functions} (see \Cref{def: LE function}), with the number of pieces, their complexities, and the complexity of the corresponding LE-functions effectively determined in terms of the format and degree of $f$.

The description of definable functions by terms yields, in particular, an effective theorem of the complement (model completeness) in the spirit of Wilkie's theorem~\cite{Wilkie1996} as explained in \Cref{sec: effective model completeness}. Theorem~\ref{thm:A} thus establishes a sharp form of much of the classical theory of $\Ranexp$.

We remark that the theorem of the complement, unlike o-minimality, is a result specific to the unrestricted exponential: it does not seem to be true for o-minimal structures generated by arbitrary chains of Pfaffian functions. All proofs of this result therefore involve features specific to the exponential map, and to our knowledge no effective version was known even setting aside ``sharp'' bounds. 

We refer the reader to~\cite[Section 2.1]{BinyaminiCarmonNovikovComplexCells} for a review of the relevant definitions and notation we use regarding sharply o-minimal structures (including the notion of sharp cell decomposition).

\subsubsection{Interpolation of rational points and Wilkie's conjecture}

One of the most successful applications of o-minimality to number theory is the Pila--Wilkie counting theorem~\cite{PilaWilkie2006}, giving subpolynomial asymptotic upper bounds for the number of rational points of bounded (multiplicative) height outside of the \emph{algebraic part} of any set which is definable in an o-minimal structure.
While these asymptotics cannot be much improved in the generality of o-minimal structures, it was also conjectured by Wilkie in~\cite{PilaWilkie2006} that for specific structures one may obtain more precise control.
Specifically, Wilkie's conjecture states that for sets definable in $\Rexp$, one may replace the subpolynomial bounds of the Pila--Wilkie theorem by polylogarithmic bounds.
Problems of this type are one of the main motivations for sharp o-minimality, and indeed using this framework this conjecture was recently resolved by Binyamini--Novikov--Zak in~\cite{BinyaminiNovikovZak2024}.

The proof of Wilkie's conjecture in~\cite{BinyaminiNovikovZak2024} consists of two stages.
First, the desired polylogarithmic bounds are established in the structure $\RrPfaff$.
This step is formulated more generally for sharply o-minimal structures satisfying an additional condition, called \emph{sharp derivatives}, which regulates the complexity of iterated derivatives of definable functions.
This condition holds for $\RrPfaff$, but is not known to hold in general.
Then, the result is extended from $\RrPfaff$ to $\Rexp$ by an application of Wilkie's theorem of the complement.
This step is not effective, and yields polynomial bounds which are not uniform over all sets of the same complexity.

We prove Wilkie's conjecture for $\Rexp$ with these improved uniform bounds.
For a set $X\subset\rReal^n$, we write $X(g,H)$ for the set of algebraic points in $X$ of degree at most $g$ (that is, points $x\in X$ such that $[\qRationals(x_1,\dots,x_n):\qRationals]\leq g$) and multiplicative Weil height at most $H$.
We write $X^{\operatorname{alg}}$ for the union of all connected, positive dimensional, semialgebraic subsets of $X$, and set $X^{\operatorname{trans}}=X\setminus X^{\operatorname{alg}}$.
A corollary of our second main theorem (\Cref{thm: wilkie intro}) is the following.

{\renewcommand{\thetheorem}{B}
\begin{theorem}\label{thm:B}
	Let $X\subset \rReal^n$ in $\Rexpsin$ be of \emph{format} $\format$ and \emph{degree} $\degree$ (in the filtration of \Cref{thm:A}).
	Then 
	\begin{equation}
		\# X^{\operatorname{trans}}(g,H)\leq\poly_{F}(D,g,\log H).
	\end{equation}
\end{theorem}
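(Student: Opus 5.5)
The plan is to combine the counting method of Binyamini--Novikov--Zak~\cite{BinyaminiNovikovZak2024} with the complexification of LE-preparation announced in the abstract. We avoid the non-effective step of passing from $\RrPfaff$ to $\Rexp$ through Wilkie's theorem of the complement. The BNZ argument needs two inputs. The first is a sharp cell decomposition. The second is a way to control high-order derivatives, or equivalently a complex-analytic extension of the cells with holomorphic functions of controlled size; this is what the \emph{sharp derivatives} condition supplies. Since Theorem~\ref{thm:A} gives the sharply o-minimal filtration, the remaining task is to replace sharp derivatives by sharp \emph{complex cellular} parametrizations, as in~\cite{BinyaminiCarmonNovikovComplexCells}, adapted to $\Rexpsin$.

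Concretely, I will proceed in four steps. Step 1: apply the effective LE-preparation theorem (the complexified Lion--Rolin theorem) to decompose $X$ of format $\format$ and degree $\degree$ into $\polyfd$ cells. On each cell, the defining LE-functions are prepared, meaning they are units times monomials in $\exp$ and $\log$ of the preparing centers. Step 2: show that each such cell, after a coordinate change that is itself an LE-map, admits a complex extension $\cell^{\delta}$ with hyperbolic margin. Take this extension in the sense of the $\hExt{\cell}{\rho}$ cells of~\cite{BinyaminiCarmonNovikovComplexCells}, fibered over discs, punctured discs and annuli. The prepared functions should extend holomorphically to $\cell^{\delta}$ and be bounded there. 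Here the exponential and logarithm become maps between punctured discs and half-strips, and the restricted $\sin$ and $\exp$ are entire. Step 3: refine these complex cells, via the cellular Yomdin--Gromov algorithm of~\cite{BinyaminiCarmonNovikovComplexCells}, into $\poly_F(D,\log H)$ cellular maps whose images cover $X$ and which extend to larger hyperbolic neighbourhoods. By Cauchy estimates, these give $r$-parametrizations with $r\sim\log H$ using only $\poly_F(D,\log H)$ charts. Step 4: run the standard Bombieri--Pila determinant and interpolation argument, treating algebraic points of degree $\le g$ as in BNZ. This yields hypersurfaces of degree $\poly(g,\log H)$ that capture $X(g,H)$ on each chart. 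Then induct on dimension, intersecting $X$ with these hypersurfaces while staying inside the filtration with polynomially controlled degree. Points on the algebraic part are discarded by the usual block argument of Pila, in the form used in~\cite{BinyaminiNovikovZak2024}: blocks are semialgebraic families defined sharply.

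The main obstacle is Step 2: producing complex cells with holomorphic extensions of the prepared LE-functions. The bounds must be polynomial in $\degree$, and the number of nested $\exp/\log$ layers must be bounded by the format. In particular, iterated logarithms near a boundary point and the unrestricted exponential near infinity require passing to hyperbolic neighbourhoods whose size does not degrade exponentially in the nesting depth. My first attempt would be to build these extensions by induction on the fiber dimension, using the complex cell framework. At each fiber, I would change variables $t\mapsto\log t$ or $t\mapsto 1/t$ to turn neighbourhoods of $0$ and $\infty$ into punctured discs, where exponentials of LE-terms are holomorphic and bounded by the preparation. I would then use the fact that a cell of length $\ell$ has at most $\ell$ such compositions, so that the loss accumulates only polynomially.
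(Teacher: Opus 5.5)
\textbf{There is a genuine gap, and it is in Step~2.} You rightly flag Step~2 as the crux, but you do not resolve it, and the mechanism you suggest fails.

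Preparation controls the unit only on the real cell and says nothing off the real locus. After the substitutions $t\mapsto 1/t$ or $t\mapsto\log t$, exponential terms acquire essential singularities at the puncture. For instance, $e^{-1/z}$ is unbounded on every punctured disc $D_{\circ}(r)$. On an annulus $A(r_1,r_2)$ around $0$ its supremum is $e^{1/r_1}$, which destroys the Cauchy estimates you need in Step~3. So the claim that exponentials of $\LE$-terms become holomorphic and bounded on punctured discs is false. Nothing in your sketch produces $(\dim X)$-dimensional complex cells carrying bounded holomorphic extensions of the $\LE$-functions with only $\poly_F(D,\log H)$ charts.

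Step~3 cannot supply this either. The complex-cell preparation the paper can invoke is for the analytically generated structure $\structure$ (a reduct of $\Ran$). The structure $\structure(\exp)$ is not of this kind, so you cannot simply run cellular Yomdin--Gromov on $\Rexpsin$-definable cells. Producing such cells is exactly the content of your unresolved Step~2. This is why the paper treats the log--exp part as the piece that ``does not interact well with the complex theory''.

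\textbf{The missing idea is not to complexify the log--exp terms at all.} By \Cref{thm: single cell}, on each admissible $\LE$-cell $X$ one has $x_i=f_i\comp\phi^{-1}\comp S$. Here $S$ is a recursive log--exp scale of length $\ell=O_F(1)$, $\phi$ is a power map, and the $f_i$ are holomorphic on a real complex cell $\ext{\cell}{\boldsymbol{\delta}}\subset\cComplex^{\ell}$ definable in $\structure$. The scale terms become independent complex variables, so $\dim\cell=\ell$ can be much larger than $\dim X$. By itself this would ruin any Siegel or Bombieri--Pila count: roughly $t^{\ell}$ Taylor conditions against $d^{m+1}$ unknowns.

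The paper repairs this in two steps.
\begin{enumerate}
\item Centers at non-maximal indices of $S$ are eliminated by lowering its recursive syntactic complexity. This is done by rewriting $\exp(s)$ through a restricted exponential, or by shortening the log-scale via \Cref{lem: LS complexity drop}. As a result, only the at most $\dim X$ maximal (most logarithmic) indices carry disc fibers.
\item \Cref{lem: thin annuli} gives every non-disc fiber extension parameter $\delta^{t}$, at a cost of only $\poly_{\ell}(t,\delta^{-1})$ cells.
\end{enumerate}

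Then every normalized Laurent coefficient of $f^{\alpha}$ that involves a non-disc variable, and the whole tail $\abs{\beta}>t$, is bounded by $2^{d}\delta^{t}$ up to harmless factors. So Siegel's lemma only has to make small the roughly $t^{m}$ coefficients in the $m\le\dim X$ disc variables. Taking $t=d(\log N)^{1/(m+1)}$ and $N=H^{d}$, and comparing with Liouville's bound, yields interpolating hypersurfaces of degree $O_F(1)\,g^{\dim X+1}(\log H)^{\dim X}$ (\Cref{thm: interpolation}).

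The count then follows by intersecting these hypersurfaces and inducting on dimension via sharp cell decomposition. Top-dimensional cells of $X\cap Y$ are discarded as lying in $X^{\operatorname{alg}}$. No $r$-parametrization of $X$ and no block argument are needed.
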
}

A main step in the proof of such point counting theorems is an interpolation result for rational points of bounded height using a small number of algebraic hypersurfaces of low degree.
We prove the following for sets in~$\Ranexp$.

{\renewcommand{\thetheorem}{C}
\begin{theorem}\label{thm:C}
	Let $X\subset T\cross [0,1]^n$ be a definable family in $\Ranexp$ of sets $X_t\subset[0,1]^n$, $t\in T$.
	Assume that $\max_t \dim X_t = m < n$.
	Then, for any $t\in T$, we have that $X_t(g,H)$ is contained in the union of at most $\poly_{X}(g,\log H)$ algebraic hypersurfaces of degree at most $\order[X]{1}\cdot g^{m+1} (\log H)^m$.
\end{theorem}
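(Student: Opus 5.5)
The plan follows the Cluckers--Pila--Wilkie strategy for $\Ran^{\mathrm{pow}}$. The ingredient we must supply is a complex-analytic parametrization of definable sets in $\Ranexp$ with uniform control. Concretely, we first establish a \emph{complex cellular preparation} for $\Ranexp$, via the complexified LE-preparation theorem of Lion--Rolin developed later in the paper. The target statement is as follows. For each $r$, every fiber $X_t$ is covered by the images of $\poly_X(r)$ maps $\phi:(0,1)^m\to X_t$. Each such map extends holomorphically to a complex cell $\cell^{\hyperbolicParameter{\rho}}$ with hyperbolic extension parameter $\rho\sim 1/r$, and satisfies $\|\phi\|\le 1$ there. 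The number of maps depends polynomially on $r$, not exponentially. The essential point is that the logarithmic and exponential factors in LE-preparation admit holomorphic extensions to $\{\cdot\}$-extensions of cells, with sizes controlled in the same way as in the subanalytic case. The key input is the cellular parametrization theorem of~\cite{BinyaminiCarmonNovikovComplexCells}, now applied to cells whose fibers are annuli or punctured discs arising from the log/exp preparation.

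Next comes the interpolation step. Given such a $\phi$ on a complex cell with extension parameter $\rho$, Cauchy estimates on the extension give Taylor coefficient bounds of order $e^{-k\cdot\Omega(\rho)}$ after rescaling. We fix the degree $d$ and consider the interpolation determinant $\det\big(P_j(\phi(x_i))\big)$, where $P_j$ ranges over the monomials of degree at most $d$ in $n$ variables and $x_i$ are preimages of points in $X_t(g,H)$. We compare two bounds on it.
\begin{itemize}
  \item The \emph{upper bound} comes from a Bombieri--Pila/Binyamini--Novikov style estimate on the complex cell: the determinant is at most $e^{-c\,N^{1+1/m}\cdot \rho}$, up to lower order terms, where $N\sim d^n$ is the number of monomials.
  \item The \emph{lower bound} is Liouville-type: a nonzero determinant is at least $H^{-O(g\, d N)}$.
\end{itemize}
Following the computations of CPW, choosing $d\sim g^{m+1}(\log H)^m$ makes the upper bound beat the lower bound. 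Hence all points in a single chart lie on one hypersurface of degree $\order[X]{1}\cdot g^{m+1}(\log H)^m$. Taking $r\sim\poly(g,\log H)$ then gives $\poly_X(g,\log H)$ charts, and therefore the stated number of hypersurfaces.

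The main obstacle is the first step. We must show that $\Ranexp$ families admit preparation and cellular parametrization with complex extensions whose number is polynomial in $r$, uniformly in $t$. Unlike the power-subanalytic case, exponentials of unbounded arguments appear, and a naive complexification has no bounded holomorphic extension. I would first use LE-preparation to reduce to cells on which every exp is applied to bounded quantities after a logarithmic change of coordinates. I would then verify that $\log$ maps the hyperbolic extension $\puncDisc^{\hyperbolicParameter{\rho}}$ to a region of bounded hyperbolic size, so that the extension parameters degrade only by $\order[X]{1}$ factors per step.
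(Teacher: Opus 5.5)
Your second step is reasonable in spirit, but the first step is false as stated, and this is not a matter of effectivity. $\Ranexp$-definable sets do not in general admit $m$-dimensional charts with bounded holomorphic extensions.

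Take $X=\{(x,e^{-1/x}):0<x\leq 1\}\subset[0,1]^2$, so $m=1$. Suppose $\phi=(\phi_1,\phi_2)$ maps the real part of a one-dimensional complex cell into $X$ and is bounded and holomorphic on the extension $\mathcal{C}^{\{\rho\}}$. Then $\phi$ is holomorphic on a complex neighborhood of the closure of that real part, using removability at $0$ or $\infty$. Suppose the image accumulated at the origin. Then $\phi_1(c)=0$ at an endpoint $c$, and by the identity theorem $\phi_2=e^{-1/\phi_1}$ on a punctured neighborhood of $c$. This is unbounded there, since $1/\phi_1$ covers a neighborhood of $\infty$, which contradicts boundedness.

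So every chart stays a positive distance from the origin, and no finite family of charts covers $X$, let alone $\poly(r)$ of them. Your suggested fix does not address this. The function $e^{-1/x}$ is flat and genuinely involves the unrestricted exponential. No logarithmic change of coordinates turns it into a restricted exponential of a bounded argument on a cell that reaches $x=0$.

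The missing idea is to give up $m$-dimensional charts. The paper writes the coordinate functions on each LE-cell of $X_t$ as $f_i\circ\phi^{-1}\circ S$. Here $S$ is a recursive log--exp scale of length $\ell=O_X(1)$, and $\phi$ is a power map on a real complex cell $\mathcal{C}^{\boldsymbol{\delta}}\subset\cComplex^{\ell}$. Each logarithmic or exponential term of $S$, such as $e^{-1/x}$, becomes an extra coordinate, so $\ell$ may be much larger than $m$.

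The number of disc fibers is then cut down to $m$, in four moves:
\begin{enumerate}
\item Centers at non-maximal indices of $S$ are removed by induction on recursive syntactic complexity. Hence $\mathcal{C}$ has disc fibers only at the at most $m$ maximal indices.
\item By the thin-annuli lemma, the non-disc fibers can be given extension parameter $\delta^t$ while the disc fibers keep $\delta$. The cost is only $\poly(t,\delta^{-1})$ cells.
\item Expand $f^\alpha$ in normalized monomials. Every monomial that involves a non-disc variable, or has $|\beta|>t$, then has coefficient at most $\delta^t 2^d$.
\item The Siegel lemma is therefore needed only for the roughly $t^m$ monomials in the $m$ disc variables. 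These are matched against roughly $d^{m+1}$ unknown coefficients of a polynomial in $m+1$ of the coordinates.
\end{enumerate}
With $\delta=O(1)$, $t=d(\log N)^{1/(m+1)}$ and $N=H^d$, comparison with the Liouville bound gives one hypersurface of degree $O(g^{m+1}(\log H)^m)$ per cell. There are $\poly_X(g,\log H)$ cells in total.

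An interpolation-determinant version could replace the Siegel lemma. It would also have to use this thinness, since otherwise its exponent is governed by $\ell$ rather than $m$.
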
}
This generalizes a similar result by Cluckers--Pila--Wilkie \cite[Theorem 2.3.1]{CluckersPilaWilkie2020} for sets definable in $\Ran^{\operatorname{pow}}$.
We note that in \cite{CluckersPilaWilkie2020}, the degree of the relevant hypersurfaces is bounded by~$(\log H)^{m/(n-m)}$.
The proofs of \Cref{thm:B,thm:C} are given in \Cref{sec: point counting}.

\subsubsection{Preparation theorems and complex cells}
\label{sec: prep thms and complex cells intro}

O-minimality of $\Ranexp$, the structure generated by globally subanalytic functions and the unrestricted exponential, as well as a more explicit description of its definable sets, was established using model theoretic methods by van den Dries and Miller and by van den Dries, Macintyre, and Marker in~\cite{vdDriesMiller1994,vdDriesMacintyreMarker1994}.
Geometric proofs were later given by Lion and Rolin in \cite{LionRolin1997}.
Lion--Rolin's constructions were adapted by van den Dries and Speissegger~\cite{vdDriesSpeissegger2002} to the more general setting of a structure $\structure(\exp)$, generated by the unrestricted exponential and a polynomially bounded o-minimal structure $\structure$ containing the restricted exponential.

This geometric approach consists of establishing \emph{preparation theorems}, expressing definable functions in these structures (piecewise) as a monomial times a unit in suitable coordinates.
Such preparation theorems for subanalytic functions were also given by Parusinski~\cite{Parusinski1994}.

The main technical tools we develop are effective versions of the Lion--Rolin logarithmic--exponential preparation theorems, depending polynomially on the degrees of the relevant functions.
Following Lion--Rolin, we will refer to these as $\LE$-preparation and $\LA$-preparation theorems (see \Cref{thm: exp split,thm: LS preparation}).
Roughly, these allow us to piecewise express definable functions (in, say, $\Rexp$) as the product of a unit, an exponential, and rational powers of a sequence of (non-vanishing) nested logarithms and translations.
For the sake of \Cref{thm:A}, the new part of this is the control on the number of pieces used and on the complexity of these expansions.
However, our methods yield additional geometric information, as we explain below, which is crucial for the proofs of \Cref{thm:B,thm:C}.

We follow the treatment in~\cite{vdDriesSpeissegger2002}, which relies on a preparation theorem for functions definable in a polynomially bounded o-minimal structure, similar to the subanalytic preparation needed in~\cite{LionRolin1997}.
We replace this by an effective version (see \Cref{thm: S-preparation}) in the setting of an \emph{analytically generated} sharply o-minimal structure $\{\FD[\format][\degree]\}$ (see~\cite{CarmonAnalyticallyGenerated}).
These are structures (equipped with FD-filtrations) generated by their collection of definable \emph{complex cells} --- holomorphic analogs of the classical cells of o-minimality.
Complex cells were introduced by Binyamini--Novikov in~\cite{BinyaminiNovikov2019} for $\Ralg$ and $\Ran$ and extended to sharply o-minimal structures by the authors in~\cite{BinyaminiCarmonNovikovComplexCells}.

The analytically generated setting is expected to include certain (conjectural) sharply o-minimal structures which are relevant for number theory but might not admit sharp derivatives in the sense of~\cite{BinyaminiNovikovZak2024}.
For example, the structure generated by restrictions of the Gamma function to compact discs.

We give here the following intuitive picture for complex cells.
A classical o-minimal cell is, roughly, a ``product'' $I_1\odot\cdots\odot I_n$ of intervals $\{I_j\}$, where the fibers are translated and scaled by continuous functions on their respective bases.
A complex cell $\cell[F]_1\odot\cdots\odot\cell[F]_n$ is composed in a similar manner from basic fibers $\{\cell[F]_i\}$ such as discs and annuli in the complex plane, where the radii of the fibers are determined by holomorphic functions of the relevant bases.

The interplay of Lion--Rolin's real methods and the theory of complex cells is essential for our diophantine applications, as we now explain (see \Cref{fig: factor through scale}).
\begin{figure}[H]
\[\scalebox{1.3}{
\begin{tikzcd}[ampersand replacement=\&]
	{\mathbb{R}^n\supset\hspace{-3em}} \& X \&\&\& {\phi(\mathbb{R}_+\mathcal{C})} \& {\hspace{-3em}\subset\mathbb{R}^N} \\
	\&\&\&\& {\mathbb{R}_+\mathcal{C}} \\
	\&\&\&\& {\mathcal{C}^{\delta}} \& {\hspace{-5em}\subset\mathbb{C}^N} \\
	\& {\mathbb{R}} \&\&\& {\mathbb{C}}
	\arrow["{\text{\raisebox{\widthof{\,}}{$S$}}}", from=1-2, to=1-5]
	\arrow["{\text{\raisebox{0mm-\widthof{T}}{LE-scale}}}"', draw=none, from=1-2, to=1-5]
	\arrow["{f\,\in\,\mathcal{S}(\exp)}"', from=1-2, to=4-2]
	\arrow["{\phi^{-1}}", from=1-5, to=2-5]
	\arrow[hook, from=2-5, to=3-5]
	\arrow["{F\text{ hol.}}", from=3-5, to=4-5]
	\arrow[hook, from=4-2, to=4-5]
\end{tikzcd}
}\]
\caption{Factoring a function $f$ in $\structure(\exp)$ through a logarithmic--exponential scale $S$, a power map $\phi$, and a holomorphic function $F$ on a complex cell $\ext{\cell}{\delta}$.}
\label{fig: factor through scale}
\end{figure}
Let $f:\rReal^n\to\rReal$ be a function definable in $\structure(\exp)$.
We partition its domain into simple pieces (e.g.\ o-minimal cells in $\structure(\exp)$), such that on each piece $X$ the function $f$ factors through a \emph{(recursive) logarithmic--exponential scale} $S$ (see \Cref{def: log scale,def: log exp scale,def: recursive scale}).
This scale maps $X$ into what is essentially, up to extracting roots, the positive real part $\rRealPos\cell$ of a complex cell $\cell$.
This complex cell supports a holomorphic function $F$, which in fact extends to a larger cell $\ext{\cell}{\delta}$, such that $f$ is given by the composition $F\comp \phi^{-1}\comp S$,
where the components of $\phi$ are given by $\phi_j(\initial{\varZ}{1}{N})=\pm z_j ^{q_j}$ for some $0\ne q_j \in \zIntegers$.
The function $F$ may be taken to be either identically vanishing or else nowhere vanishing on $\ext{\cell}{\delta}$.

This process separates the ``globally subanalytic part'' of $f$, which is amenable to treatment by complex methods, from the ``logarithmic--exponential part'', which is encapsulated by the scale $S$ and which does not interact well with the complex theory.
The terms of the scale $S$ act as additional auxiliary variables for the holomorphic function $F$.
This large increase in the number of variables would normally leave us unable to proceed with the usual arguments for constructing interpolating polynomials, which depend delicately on the dimensions of the relevant sets.
However, one may construct the scale $S$ and the cell $\cell$ such that at most $\dim X$ of the coordinates of $\cell$ are given by discs, while the rest are given by, for example, annuli.
It is a crucial feature of the hyperbolic geometry of a complex cell $\cell$ inside its extension $\ext{\cell}{\delta}$ that one may then choose the extension parameters $\delta$ for the fibers of $\cell$ to be such that all non-disc coordinates are exponentially ``thinner'' than the disc coordinates, and so will not affect the required estimates.
In this way, one may treat the dimension of $\cell$ as if it were equal to $\dim X$ for the sake of constructing interpolation polynomials, which yields \Cref{thm:C}.
In the sharp setting, polynomially effective control throughout the above process yields in the same way the analogous interpolation result for $\Rre_{\exp}$ definable sets, which implies \Cref{thm:B}.

\subsection{Full statements}
We now state our main results in their general form.
Let $\structure$ be an analytically generated sharply o-minimal structure with sharp cell decomposition (see \cite{CarmonAnalyticallyGenerated}), which contains the graph of $\restrict{\exp}{[0,1]}$.
For example, one may take $\structure=\Rre=\rReal({\restrict{\sin}{[0,1]},\restrict{\exp}{[0,1]}})$.
Our first main theorem is the following.

\begin{theorem}
	\label{thm: S exp so min}
	The structure $\structure(\exp)$ generated by $\structure$ and the graph of $\exp$ admits a sharply o-minimal FD-filtration with sharp cell decomposition.
	Moreover, one has an effective piecewise description of definable functions in $\structure(\exp)$ by terms in a suitable language.
\end{theorem}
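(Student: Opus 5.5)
The plan is to follow the van den Dries--Speissegger route to o-minimality of $\structure(\exp)$, making every step polynomially effective. First I define the candidate filtration. A set $X\subset\rReal^n$ gets format $\format$ and degree $\degree$ if it is a Boolean combination, followed by a projection, of sets of the form $\{x: \tau(x)=0\}$ or $\{x:\tau(x)>0\}$. Here $\tau$ ranges over LE-terms (\Cref{def: LE function}) built from $\structure$-definable functions of format $\le\format$ and degree $\le\degree$, together with $\exp$ and $\log$. The number of such terms, their nesting depth and the ambient dimension are bounded by $\format$, and the sum of the degrees of the ingredients is bounded by $\degree$. With this definition, the axioms of an FD-filtration are immediate: compatibility with Boolean operations, products, projections, and the inclusion of the degree of polynomials. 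So the content of the theorem is twofold. One must show the following.
\begin{enumerate}[label=(\roman*)]
\item Sets of this form in $\rReal$ have $\poly_{\format}(\degree)$ components.
\item Projections can be eliminated effectively: a projection of such a set is again quantifier-free in LE-terms, with polynomially controlled complexity. This is the effective theorem of the complement and the description by terms.
\end{enumerate}

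The main tool for both is the effective $\LE$-preparation theorem (\Cref{thm: exp split,thm: LS preparation}). This theorem is in turn built on the effective $\structure$-preparation (\Cref{thm: S-preparation}), which is valid because $\structure$ is analytically generated with sharp cell decomposition. I would prove the following by induction on the nesting depth of the terms. For each LE-term $\tau$ in variables $(x,y)$ with $y$ the last coordinate, there is a sharp cell decomposition of $\rReal^{n+1}$ into $\poly_{\format}(\degree)$ cells with the following property. On each cell, $\tau$ is prepared with respect to $y$: it equals
\[
u\cdot \exp(g)\cdot\prod_j \abs{\ell_j}^{q_j},
\]
where the $\ell_j$ are nested logarithms of translates $y-\theta(x)$, the $\theta,g$ are functions of $x$ (or $g$ is a prepared sum in the scale), $u$ is a unit of the form $F\comp\phi^{-1}\comp S$ with $F$ holomorphic and nonvanishing on an extended complex cell $\ext{\cell}{\delta}$, and all centres and coefficients are again LE-terms in $x$ of controlled format and degree. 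The inductive step for $\exp(\tau)$ follows Lion--Rolin. One splits the exponent into its ``infinitesimal'' part, which is absorbed into the unit via the holomorphic extension, and its ``dominant'' part, which is placed in the new exponential factor. Effectivity of the number of cells comes from the sharp cell decomposition in $\structure$, applied to the finitely many auxiliary functions $F$.

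Given preparation, (i) follows at once. A prepared function on a cell fibre is a unit times a monomial in the scale, so its zero set in $y$ is either the whole fibre or empty, and its sign is constant. Hence the zero/sign conditions are constant on each of the $\poly_{\format}(\degree)$ cells. For (ii), I would argue that the cells are themselves cut out by LE-terms in $x$ of controlled complexity. The projection of a union of cells is the union of their bases, and the bases are handled by induction on $n$. The result describes definable functions piecewise by LE-terms, with the number of pieces and the term complexity polynomial in $\degree$, which gives the second statement of \Cref{thm: S exp so min}. Sharp cell decomposition for $\structure(\exp)$ then follows by the standard inductive construction from this piecewise description.

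I expect the main obstacle to be the inductive exponential step of the preparation, specifically controlling the degrees when $\log$ and $\exp$ of prepared expressions are repeatedly rewritten. Each step introduces new centres $\theta$ and new scale elements, and a naive count would give degree bounds exponential in the nesting depth. I would first try to keep the depth of the scale bounded by the format alone and let only the holomorphic unit $F$ carry the degree. Degree bounds for $F$ and its zero loci would then come from the sharp complex-cell theory of \cite{BinyaminiCarmonNovikovComplexCells}, in particular from the fact that restrictions and compositions of holomorphic functions on extended cells stay in $\structure$ with degree polynomial in the input degrees.
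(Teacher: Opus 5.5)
Your outline has the right coarse architecture: effective $\structure$-preparation, then effective preparation of $\LE$-functions, then a decomposition on whose pieces all relevant functions have constant sign, from which component bounds, elimination of projections and cell decomposition follow. However, it is missing the ingredient that makes the induction close with polynomial bounds, namely polynomial dependence on the \emph{number} of functions treated simultaneously.

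Your inductive statement is about a single term $\tau$, and you attribute the cell count to sharp cell decomposition in $\structure$. But the $\poly_F(D)$ cells produced by $\structure$-preparation live in the auxiliary space into which the maps $T_i=(r_i,\exp(s_i))$ land. Pulling them back to $\rReal^{n+1}$ requires the signs of $\poly_F(D)$-many auxiliary $\LE$-functions at once:
the base indicators $\chi_{D'}\comp T'_i$;
the walls $\alpha\comp T'_i$;
the functions $s_{i,q}-\log(\alpha\comp T'_i)$, which is how the sign of $\exp(s_{i,q})-\alpha\comp T'_i$ is reduced to lower syntactic complexity;
and later the centres, coefficients and all their pairwise differences.
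Packing these into one term makes the tree-format $\poly(D)$, which destroys the $\poly_F(D)$ bounds. Preparing them one at a time and refining the resulting decompositions against each other as in \Cref{rem: refine two covers} gives $\poly(D)^{\poly(D)}$ pieces.

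The paper gets around this by proving \Cref{thm: LE cylinder decomposition,thm: exp split,thm: LS preparation} for $k$ functions with bounds $\poly_F(D,k)$. Each $f_i$ gets its own map $T_i$, so no common tree is ever formed. The pieces are $\LE$-cylinders rather than cells: an intersection of cylinders is a single cylinder by \Cref{rem: cell vs cylinder}, whereas controlling the components of intersections of cells is exactly what is being proved. The induction hypothesis is then invoked once, simultaneously, for all auxiliary functions. The same defect is already visible in your filtration. Bounding the number of terms by $F$ violates the axiom $\bigcup_i A_i\in\structure(\exp)_{F,\sum D_i}$. Letting the number of pieces grow with $D$, as the paper does by allowing unions of $\poly_F(D)$ cylinders, requires exactly this $\poly(k)$ control.

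Your prepared form also makes the unit a holomorphic function on one extended complex cell through a common scale. This is precisely the form in which the paper \emph{loses} polynomial dependence on $k$ (\Cref{thm: single cell} and the remark following it). So the paper first proves sharp o-minimality and sharp cell decomposition from the real $\LE$- and $\LA$-preparation theorems. There, being $\LE$-prepared only means $g\cdot\exp(h)\cdot u$ with $g\in\LA_{n+1}$ and $h$ of lower exponential level. Complex cells come in only afterwards, with \Cref{thm: sharp cd} available as a black box to recombine per-function preparations. This form also avoids splitting exponents into dominant and infinitesimal parts. If your dominant part is meant to be a finite sum of scale monomials with coefficients depending only on $x$, that fails in general: $y/(1-1/\log y)=\sum_{j\geq 0}y(\log y)^{-j}$ as $y\to+\infty$, and every term is unbounded. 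Finally, the obstacle you single out (degrees exponential in the nesting depth) is harmless, since the depth is bounded by the format. The real difficulty is the dependence on the number of functions.
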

In particular, this implies \Cref{thm:A}.
The proof of this theorem is given in \Cref{sec: so min}.
We use this FD-filtration in the statements of the subsequent theorems of this section.

Our second main result is Wilkie's conjecture for $\structure(\exp)$, with $\structure$ as above and using the FD-filtration of \Cref{thm: S exp so min}.
\begin{theorem}
	\label{thm: wilkie intro}
	Let $X\subset \rReal^n$ in $\structure(\exp)$ be of format $\format$ and degree $\degree$.
	Then 
	\begin{equation}
		\# X^{\operatorname{trans}}(g,H)\leq\poly_{F}(D,g,\log H).
	\end{equation}
\end{theorem}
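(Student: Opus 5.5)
The proof follows the scheme of Binyamini--Novikov--Zak: an interpolation step, then an induction on dimension that strips off the algebraic part. Throughout we work with the sharply o-minimal FD-filtration of \Cref{thm: S exp so min}, with sharp cell decomposition. First reduce to $X\subset[0,1]^n$. Cover $\rReal^n$ by the $2^n$ charts $x_i\mapsto x_i$ or $x_i\mapsto 1/x_i$ on $\abs{x_i}\le 1$ or $\abs{x_i}\ge 1$, composed with affine maps. These preserve the degree of algebraic points and change heights only polynomially. They also preserve the format up to $O_F(1)$ and the degree polynomially.

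\textbf{Step 1 (interpolation).} This is the sharp analogue of \Cref{thm:C}. Let $X\subset[0,1]^n$ in $\structure(\exp)$ have format $F$, degree $D$ and $\dim X=m<n$. Then $X(g,H)$ is contained in at most $\poly_F(D,g,\log H)$ hypersurfaces of degree $O_F(1)\,g^{m+1}(\log H)^m$.

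\begin{itemize}
\item Decompose $X$ into $\poly_F(D)$ pieces via the effective $\LE$/$\LA$-preparation (\Cref{thm: exp split,thm: LS preparation}).
\item On each piece, write the coordinate functions as $F\comp\phi^{-1}\comp S$, following \Cref{fig: factor through scale}. Here $S$ is a recursive LE-scale, $\phi$ is a power map, and $F$ is holomorphic on $\ext{\cell}{\delta}$, with at most $m$ disc fibres and the remaining fibres annular.
\item Choose $\delta$ so that the annular fibres are exponentially thin relative to the disc fibres. Cauchy estimates then give a Bombieri--Pila / Binyamini--Novikov type interpolation determinant bound. In that bound only $m$ effective variables contribute, with constants polynomial in $D$.
\item Standard counting of monomials against the height lower bound for a nonvanishing determinant of algebraic points of degree $\le g$ then gives a polynomial vanishing on $X(g,H)\cap(\text{piece})$. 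More precisely, we get one polynomial per subcell, after subdividing each cell into $\poly(\log H)$ subcells of suitably small hyperbolic diameter. This yields the stated degree and count.
\end{itemize}

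\textbf{Step 2 (induction).} Induct on $m=\dim X$; for $m=0$ the bound $\#X\le\poly_F(D)$ is just sharp o-minimality.

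\begin{itemize}
\item For $\dim X=m$, apply Step 1 to obtain hypersurfaces $Z_1,\dots,Z_N$ with $N=\poly_F(D,g,\log H)$ and $\deg Z_j=\poly_F(g,\log H)$.
\item Each $X\cap Z_j$ is definable of format $O_F(1)$ and degree $\poly_F(D,g,\log H)$. This is the point where we use that in the filtration the degree of an algebraic hypersurface enters polynomially.
\item Split $X\cap Z_j$ as $Y_j\cup W_j$ with $\dim W_j<m$, and $Y_j$ the part of $X\cap Z_j$ of dimension $m$, which is open in $X$ and contained in $Z_j$.
\item The points of $X^{\operatorname{trans}}$ in $W_j$ lie in $W_j^{\operatorname{trans}}$, since $W_j\subset X$ implies $W_j^{\operatorname{alg}}\subset X^{\operatorname{alg}}$. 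These are handled by induction.
\end{itemize}

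The standard BNZ device treats $Y_j$: replace $X$ by the family of fibres of $X$ over algebraic subvarieties, or pass to a definable family parametrized by the coefficients of $Z_j$. Then one repeats Step 1 for the ambient variety $Z_j$ rather than $\rReal^n$, obtaining a Pila--Wilkie ``blocks'' formulation. Concretely, the aim is to prove the stronger statement that $X(g,H)$ is covered by $\poly_F(D,g,\log H)$ \emph{blocks}, namely connected semialgebraic sets of controlled complexity contained in $X$. Positive-dimensional blocks lie in $X^{\operatorname{alg}}$, so the theorem follows by counting zero-dimensional blocks. Because all quantities are polynomial in the degree, iterating $n$ times keeps the bounds of the form $\poly_F(D,g,\log H)$.

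\textbf{Main obstacle.} The hard step is Step 1: keeping the interpolation uniform and polynomial in $D$. This requires effective control over the complex cells produced by the $\LA$-preparation, in particular the polynomial bounds on the number of cells, on the sup-norms of $F$ on $\ext{\cell}{\delta}$, and on the thinness of the non-disc fibres. The blocks iteration is routine given sharp cell decomposition.
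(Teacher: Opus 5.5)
Your Step~1 is exactly the paper's \Cref{thm: interpolation}, and the overall scheme (interpolate, intersect, cell-decompose, induct on dimension) is right. Step~2, however, has a genuine gap. Intersecting $X$ with the interpolating hypersurfaces $Z_j$ one at a time does not lower the dimension once $m\le n-2$. Nothing prevents $Z_j$ from containing an $m$-dimensional transcendental piece of $X$: if $X\subset\{x_n=0\}$, then $\{x_n=0\}$ is a legitimate output of Step~1, and $Y_j=X$. An $m$-dimensional set inside an $(n-1)$-dimensional hypersurface need not lie in $X^{\operatorname{alg}}$. So all the content is pushed into your treatment of $Y_j$, and that treatment is a list of devices rather than an argument: families over the coefficients of $Z_j$, ``Step~1 relative to $Z_j$'', a blocks statement. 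Step~1 produces hypersurfaces of $\rReal^n$, not of $Z_j$. The blocks statement is stronger than the theorem and is not proved.

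The missing idea, which is what the paper uses, is to interpolate in $(m+1)$-tuples of coordinates. The polynomial in \Cref{lem: upper bound} involves only an $(m+1)$-tuple of coordinate functions. Equivalently, apply \Cref{thm: interpolation} to each projection $\pi_I(X)\subset[0,1]^{m+1}$ with $\#I=m+1$; this set has dimension $\le m$, and $\pi_I$ does not increase degrees or heights of algebraic points. Multiplying the polynomials obtained for a fixed $I$ gives a single $P_I(x_I)$ of degree $\poly_F(D,g,\log H)$ vanishing on $X(g,H)$. Then $Y=\bigcap_I\{P_I=0\}$ is algebraic with $\dim Y\le m$, since every $(m+1)$-coordinate projection of $Y$ lies in a hypersurface. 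By \Cref{thm: sharp cd}, $X\cap Y$ splits into $\poly_F(D,g,\log H)$ admissible $\LE$-cells of format $O_F(1)$ and degree $\poly_F(D,g,\log H)$. The $m$-dimensional cells lie in $X^{\operatorname{alg}}$ because $\dim Y\le m$. The remaining cells have dimension $<m$ and are handled by induction; the depth is at most $n$, so the bounds stay polynomial. No families or blocks are needed.

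Your ``ambient $Z_j$'' idea could be repaired differently. Decompose $Z_j$ into semialgebraic cells and project each homeomorphically onto an open cell in some $\rReal^k$ with $k<n$; this preserves degree and height bounds and transports $(\cdot)^{\operatorname{alg}}$. Then induct on $n$. But this has to be stated, and it is more roundabout.

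Finally, if you re-prove Step~1 with interpolation determinants, note that a determinant built on $\mu$ points of degree $\le g$ lies in a field of degree up to $g^{\mu}$. Liouville then does not give bounds polynomial in $g$. The paper avoids this by using the Thue--Siegel lemma (\Cref{lem: siegel}) to produce one $P$ that is uniformly small on the whole cell. It then applies \Cref{lem: Liouville bound} to the single number $P(\boldsymbol{\xi})$, which has degree $\le g$.
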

In particular, this implies \Cref{thm:B}.
The proof of this theorem is given in \Cref{sec: point counting}.

These two results follow from our main preparation theorems, \Cref{thm: single cell,cor: different scales poly k 2}.
As the statement is technical, we give here a simplified informal statement, omitting the precise forms of the effective bounds.
\begin{theorem}
	\label{thm: complex cell preparation intro}
	Let $f:\rReal^n\to\rReal$ be definable in $\structure(\exp)$.
	Then there is a decomposition of $\rReal^{n}$ into finitely many o-minimal cells of $\structure(exp)$, and for each cell $X$ there is a recursive log--exp scale $S$ of length $N$, a real complex cell $\ext{\cell}{{\delta}}\subset\cComplex^{N}$, a power map $\phi:\ext{\cell}{{\delta}}\to\cComplex^{N}$, and a holomorphic function $g:\ext{\cell}{{\delta}}\to\cComplex$, such that $S(X)\subset\phi(\rRealPos\cell)$ and such that ${f=g\comp\phi^{-1}\comp S}$ on $X$.
	The function $g$ may be taken to either vanish identically or else vanish nowhere on $\ext{\cell}{\delta}$.
	The complex cell $\cell$ may be taken to have at most $\dim X$-many disc fibers, corresponding to the most logarithmically nested components of $S$ with respect to each variable.
\end{theorem}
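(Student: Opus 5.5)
We prove the statement by induction on the ambient dimension $n$, following the Lion--Rolin / van den Dries--Speissegger scheme, but recording at each stage the complex-cell data. First, by the effective $\LE$-preparation (\Cref{thm: exp split}) and $\LA$-preparation (\Cref{thm: LS preparation}), after a cell decomposition of $\rReal^n$ into cells of $\structure(\exp)$, on each cell $X$ with base $X'\subset\rReal^{n-1}$ the function $f$ takes the form
\[
	f(x',x_n)=a(x')\,\prod_j L_j(x)^{r_j}\,\exp(\psi(x))\,U(x',L_1,\dots,L_k),
\]
where the $L_j$ are nested logarithms of translates $x_n-\theta(x')$, the exponent $\psi$ is itself prepared, and $U$ is a unit. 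The functions $a,\theta,\psi$ and the coefficients of $U$ are definable on $X'$, so by induction on $n$ each of them factors through a recursive log--exp scale on $X'$ and a holomorphic function on a complex cell. We concatenate these base scales with the fiber terms $L_1,\dots,L_k$ (and the exponential term) to form the recursive scale $S$ on $X$.

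Next we realize the unit. The key input is the preparation theorem in the polynomially bounded structure $\structure$ (\Cref{thm: S-preparation}) in its complex-cell form. Applied to the $\structure$-definable function $U$ in the variables given by the scale terms, it produces a complex cell $\cell$ with an extension $\ext{\cell}{\delta}$ on which $U$ becomes holomorphic and nonvanishing after passing through a power map $\phi$, $\phi_j(z)=\pm z_j^{q_j}$, which extracts the roots needed to make the real positive parts match. Monomials $L_j^{r_j}$ and the prefactor $a$ are absorbed into $g$ by the same power map. Since $\exp$ of a holomorphic function is holomorphic and nonvanishing, the exponential factor is handled by composing it with the holomorphic extension of $\psi$. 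The dichotomy "$g\equiv0$ or nowhere zero" follows because on each piece $f$ is either identically zero or equals a product of nonvanishing factors. We then fibre the cells of the base over each other in the order dictated by the scale, using the fact that complex cells are closed under taking fibred products over holomorphic radii. This yields $S(X)\subset\phi(\rRealPos\cell)$ and $f=g\comp\phi^{-1}\comp S$.

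The delicate point, which I expect to be the main obstacle, is the count of disc fibers. If $\dim X=d$, then $X$ has $d$ "open" coordinate directions. For each such variable $x_i$, only the most deeply nested logarithm $L_{k_i}$ should carry a disc fiber. All other scale terms are, on the prepared cell, bounded away from $0$ and $\infty$ in a ratio sense relative to the deeper one, so they should be representable by annulus (or point) fibers. To arrange this, I will first try to refine the cell decomposition so that each shallower term $L_j$ ($j<k_i$) ranges, as a function of $L_{k_i}$, in a region of the form $L_j=\exp(L_{j+1})$. Then, after the power map and a rescaling, its fiber over $L_{k_i}$ is an annulus whose radii are holomorphic in the previous coordinates. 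Points of $X$ in non-open directions contribute zero-dimensional (point) fibers. Sub-additivity of the $\delta$-extensions must be checked so that $\ext{\cell}{\delta}$ still contains the images of all the needed continuations. Effectivity throughout comes from the polynomial bounds in \Cref{thm: exp split,thm: LS preparation,thm: S-preparation}, though the informal statement above does not require tracking them.
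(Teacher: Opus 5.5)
Your proposal has a genuine gap at its central step. You assert that the complex-cell form of $\structure$-preparation yields a power map $\phi_j=\pm z_j^{q_j}$. It does not. The real cellular maps coming from \Cref{thm: S-preparation} (via the CPrT) have coordinates $\phi_j=\pm z_j^{q_j}+\varphi_j(z_1,\dots,z_{j-1})$, with holomorphic centers $\varphi_j$ that need not vanish, and these centers are exactly where disc fibers come from. If the coordinate functions of the scale space are prepared together with $F$, then a zero center plus compatibility with $\{w_j=0\}$ forces $z_j\neq0$ on the fiber, so that fiber is not a disc; a nonzero center, by contrast, permits a disc around $z_j=0$. So both ``$\phi$ is a power map'' and ``discs only at maximal indices'' come down to eliminating the centers at non-maximal indices, and you have no mechanism for this.

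The substitute you sketch does not work. Shallower terms are not ratio-bounded relative to deeper ones (compare $L_0$ with $L_1=\log\abs{L_0}-\theta_1$). The relation $\abs{L_j}=\exp(L_{j+1}+\theta_{j+1})$ involves the unrestricted exponential, so it cannot be built into an $\structure$-definable cell. In the scale space the terms must be treated as independent variables, and the fiber types are whatever the preparation of $F$ produces. Your argument also ignores the exponential terms of the scale, which are non-maximal as well.

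The missing idea, which is the core of the proof of \Cref{thm: single cell}, is an induction on the recursive syntactic complexity of the scale. First write $f=F\comp S$ with $F\in\structure$ for a single recursive scale $S$ (\Cref{thm: recursive preparation}). Then prepare $F$ and the coordinate functions once on all of $\rReal^\ell$. Whenever a non-maximal index $j$ has $\varphi_j\not\equiv0$, the term $S_j$ is comparable to a function $\sigma$ of earlier terms, so it can be removed from the scale and the construction restarted. An exponential term is removed via $\exp(s)=\exp(s-\log\sigma)\cdot\sigma$ with a restricted exponential; a logarithmic term is removed via \Cref{lem: LS complexity drop}. Centers at maximal indices are instead absorbed by replacing $S_j$ with $S_j-\varphi_j(z_1,\dots,z_{j-1})$. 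This is legitimate because the earlier terms depend only on earlier variables and the log-scale center condition is preserved.

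Several other steps also need repair.
\begin{enumerate}
\item Your starting normal form is not what \Cref{thm: exp split,thm: LS preparation} give. The unit there is an $\LE$-function that still depends on $x_n$ through exponentials, and the exponent $\psi$ depends on $x_n$, so induction on $n$ does not reach either. You need the construction of \Cref{thm: recursive preparation}, which puts each $\exp(s_q)$ into the scale and inducts on syntactic complexity with respect to $x_n$.
\item Placing $\exp\comp\tilde\psi$ inside $g$ takes $g$ out of $\structure$ and typically makes it unbounded on the extension. Then neither the CPrT nor the bounds used in \Cref{sec: point counting} apply, and the intended separation of the subanalytic part from the log--exp part is lost.
\item The dichotomy for $g$ must hold on all of $\ext{\cell}{\delta}$; it cannot be read off from nonvanishing of the real factors on $X$. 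In your scheme it actually fails: a monomial $L^{r}$ in a disc coordinate with $\phi_j=\pm z_j^{q_j}$ becomes a multiple of $z_j^{q_jr}$, which vanishes or has a pole at $z_j=0$. In the paper, $g=F\comp\phi$ for the original centered cellular map, which is nonvanishing by the CPrT.
\item Gluing base cells obtained from separate inductive applications to fiber cells for $U$ is not justified. $U$ depends on the base through $\varphi(x')$, and preparing it would re-cut the base. The paper avoids this with a single preparation on the full scale space, and its induction hypothesis covers several functions sharing one scale and one cell.
\end{enumerate}
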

The proof of \Cref{thm: complex cell preparation intro} is given in \Cref{sec: complex cells}.
\Cref{thm:C} follows from an analogue of \Cref{thm: complex cell preparation intro} for $\Ranexp$-definable sets, given in \Cref{sec: Ranexp preparation}.

\subsection{Structure of the paper}
In order to make the proofs more accessible for the reader interested in only a subset of the results, we have tried to organize the sections in a roughly increasing order of prerequisites.

In \Cref{sec: cylindrical decomposition}, we recall some of the notions used in~\cite{vdDriesSpeissegger2002} and then construct a sharply o-minimal FD-filtration on $\structure(\exp)$ using effective cylindrical decomposition theorems for the class of \emph{$\LE$-functions} in this structure.
We then prove sharp cellular decomposition in terms of \emph{$\LE$-cells}, which gives an effective description by terms for definable functions in $\structure(\exp)$ and, additionally, an effective model completeness result in the case $\structure=\Rre$.
We also introduce the notions of complexity for $\LE$-functions which we use throughout the paper.

In \Cref{sec: prepartion theorems}, we state and prove effective preparation theorems for $\LE$-functions which, in particular, imply the cylindrical decomposition theorems used in \Cref{sec: cylindrical decomposition}.
All statements and proofs until the end of this section are formulated such that no familiarity with complex cells is needed.

In \Cref{sec: complex cells}, we prove our main preparation theorem, \Cref{thm: complex cell preparation intro}, factoring definable functions in $\structure(\exp)$ through a recursive log--exp scale and holomorphic functions on a complex cell.

Finally, in \Cref{sec: point counting}, we prove Wilkie's conjecture for~$\structure(\exp)$ (\Cref{thm: wilkie intro}) and the interpolation theorem for $\Ranexp$-definable sets (\Cref{thm:C}).
Here we crucially use the full strength of the complex cellular formulation of the preparation theorems in \Cref{sec: complex cells}.

\subsection{Notation}
Each appearance of an expression of the form $c=\poly_{a}(b)$ means that $c$ is bounded from above by the value at $x=b$ of some polynomial $p_a(x)\in\rReal_{> 0}[x]$ depending only on $a$.
This polynomial may be different at each occurrence of this notation, which only serves as shorthand.
Similarly, an expression of the form $c=\order[a]{b}$ (respectively, $c=\order*[a]{b}$) means $c\leq C_a\cdot b$ (respectively, $c\geq C_a\cdot b$), where $C_a$ is some positive constant depending only on $a$, which may be different at each occurrence of this notation.

We consider $\log$ as a function defined on all of $\rReal$ by setting its value to be~$0$ for non-positive inputs.
The size of a finite set $X$ will be denoted by $\# X$.
Finally, we will often abbreviate a tuple $(x_1,\dots,x_n)$ as $\initial{x}{1}{n}$.

\subsection*{Acknowledgements}

This work was partially done while the authors were at the Institute for Advanced Study in Princeton and they would like to thank the institute for its hospitality and for providing excellent working conditions. 
G.B.\ was supported by the Marvin V.\ and Beverly J.\ Mielke Endowed Fund and the Infosys Member Fund and D.N.\ was supported by the Kovner Member Fund.
G.B.\ and O.C.\ were also supported by the European Union (ERC, SharpOS, 101087910) and by the Israel Science Foundation (grant No.\ 2067/23).
D.N.\ was also supported by the Israel Science Foundation grant 1167/17 and by Minerva grant 714141.

An earlier version of this paper contained several minor mathematical typos that were identified by the use of an LLM.

\section{Cylindrical decomposition}
\label{sec: cylindrical decomposition}
In this section, we state a cylindrical decomposition theorem (\Cref{thm: LE cylinder decomposition}) which follows from the preparation theorems of \Cref{sec: prepartion theorems} and use it to prove our first main result, \Cref{thm: S exp so min}.
We fix from now on a sharply o-minimal structure $\{\FD[\format][\degree]\}$ which we assume to have sharp cell decomposition, to contain the graph of $\restrict{\exp}{[0,1]}$, and to be analytically generated in the sense of~\cite{CarmonAnalyticallyGenerated} (in particular, $\structure$ is a reduct of $\Ran$).

\subsection{Previous work}
We begin by recalling some of the notions used in~\cite{vdDriesSpeissegger2002}.

\begin{definition}
	\label{def: LE function}
	An \emph{$\LE$-function} is a function $f:\rReal^n\to\rReal$, for some $n\in\nNatural$, obtained by iterated composition of functions in $\structure$, of $\exp$, and of $\log$.
	We write $\LE_n$ for the collection of $\LE$-functions whose domain is $\rReal^n$.
\end{definition}
\begin{definition}
	An \emph{$\LE$-set} $S\subset\rReal^n$ is a finite union of sets of the form
	\begin{equation}
		\{\sign f_1 = \sigma_1,\dots,\sign f_k=\sigma_k\},
	\end{equation}
	where $f_1,\dots,f_k\in\LE_n$ and $\sigma_1,\dots,\sigma_k\in\{+,0,-\}$.
\end{definition}

The structure $\structure(\exp)$ is generated by $\LE$-sets.
It is clear that this collection of sets is closed under boolean operations and under taking complements.
In~\cite{vdDriesSpeissegger2002} it is shown that the projection of an $\LE$-set is again an $\LE$-set, and thus that $\LE$-sets are precisely the definable sets in $\structure(\exp)$.
In particular, all definable functions are piecewise given by terms in an appropriate language.
The proof also shows that an $\LE$-set in $\rReal$ has finitely many connected components, and so $\structure(\exp)$ is o-minimal.

One considers the following kind of $\LE$-sets, whose projection to the first coordinates is, by construction, an $\LE$-set.
\begin{definition}
	Let $C'\subset\rReal^n$ be an $\LE$-set.
	An \emph{$\LE$-cylinder} $C\subset\rReal^{n+1}$ with \emph{base} $C'$ is either the graph of an $\LE$-function $f$ over $C'$ or the region, which we call an \emph{interval}, between two such graphs for functions $f_1,f_2$.
	We also include in this second case the possibility of a region between the graph of an $\LE$-function and $\pm\infty$.
	We write $C=\graph_{C'}(f)$ or $C=(f_1,f_2)_{C'}$, accordingly.

	We call the functions $f,f_1,f_2$ the \emph{walls} of $C$ and, in the case of an interval, require that $f_1$ is strictly smaller than $f_2$ uniformly over $C'$.
\end{definition}
\begin{remark}
	\label{rem: cell vs cylinder}
	We note the following differences between an $\LE$-cylinder and the similar notion of an o-minimal cell.
	First, the base $C'$ of a cylinder $C$ may be disconnected.
	A priori, it might have infinitely many connected components, though this is eventually ruled out by the o-minimality of $\structure(\exp)$.
	Second, the walls of an $\LE$-cylinder are not required to be continuous.
	
	On the other hand, the intersection of two $\LE$-cylinders is a single $\LE$-cylinder --- for example, the intersection of two cylinders of the form $(f_1,g_1)_{C'_1}$ and $(f_2,g_2)_{C'_2}$ has as a base the $\LE$-set given by the intersection of the sets $C'_1$, $C'_2$, $\{f_1<g_2\}$, and $\{f_2<g_1\}$.
	The walls of the resulting cylinder are given by $\max\{f_1,f_2\}$ and $\min\{g_1,g_2\}$.
	We rely on this fact in our constructions since, a priori, the number of connected components in such an intersection might be large with respect to the complexities of the given cylinders (though this is eventually ruled out by our results).
	We also have that the fiber over every base point of $C'$ is a connected subset of $\rReal$.
\end{remark}

\begin{remark}
	\label{rem: refine two covers}
	Since the intersection of two $\LE$-cylinders is a single $\LE$-cylinder, one may take a common refinement (of controlled size) of two coverings of $\rReal^n$ by $\LE$-cylinders by considering all pairwise intersections between elements of the first cover and elements of the second cover.
	In the same way, one may refine a given $\LE$-cylinder in $\rReal^n$ by the elements of a covering of $\rReal^n$ by $\LE$-cylinders.
\end{remark}

A corollary of the main results of~\cite{vdDriesSpeissegger2002} is the following, from which \mbox{o-minimality} of $\structure(\exp)$ and the description of definable sets as $\LE$-sets follow.
\begin{theorem}[{\cite[Theorem 3.1]{vdDriesSpeissegger2002}}]
	\label{thm: LR qualitative}
	Every $\LE$-set is a finite union of $\LE$-cylinders.
\end{theorem}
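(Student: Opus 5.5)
The plan is to prove \Cref{thm: LR qualitative} by induction on the ambient dimension $n$, following the preparation-theorem strategy of Lion--Rolin and van den Dries--Speissegger. For $n=0$ there is nothing to prove. It suffices to handle a single basic set $\{\sign f_1=\sigma_1,\dots,\sign f_k=\sigma_k\}$ with $f_j\in\LE_{n+1}$. A finite union of finite unions of cylinders is again such a union, and by \Cref{rem: refine two covers} pairwise intersections of cylinders are cylinders. So it is enough to produce, for each $f\in\LE_{n+1}$ separately, a finite covering of $\rReal^{n+1}$ by $\LE$-cylinders on each of which $\sign f$ is constant. Intersecting these coverings over $j=1,\dots,k$ then gives a covering on whose pieces all the signs are constant, and the basic set is the union of some of the pieces.

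For a single $f(x,y)$, with $x\in\rReal^n$ and $y$ the last variable, I would argue by induction on the term complexity of $f$, that is, the number of nested applications of $\exp$, $\log$ and $\structure$-functions. The target is an $\LE$-preparation statement. One covers $\rReal^{n+1}$ by finitely many $\LE$-cylinders $C$ over bases $C'\subset\rReal^n$ such that on each $C$
\[
f(x,y)=a(x)\,\abs{y-\theta(x)}^{r}\exp(g(x,y))\,u(x,y),
\]
or more generally the same form with $\abs{y-\theta}$ replaced by a rational monomial in a log-scale of iterated logarithms of $\abs{y-\theta}$. Here $a,\theta$ are $\LE$-functions of $x$, $r\in\qRationals$, $g$ is an $\LE$-function of simpler type, and $u$ is a unit. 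Once $f$ is prepared like this, $\sign f=\sign a(x)$ on $C$, because the other factors are positive. We then split the base $C'$ into the three $\LE$-sets $\{\sign a=\sigma\}$, $\sigma\in\{+,0,-\}$, which refines $C$ into finitely many cylinders on which $\sign f$ is constant. Note that we never need the base to be decomposed by the inductive hypothesis, since bases are only required to be $\LE$-sets. The induction on $n$ is still used inside the preparation step, when cylinders in $\rReal^n$ are needed to control the centers $\theta$.

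The preparation itself proceeds by the composition steps. For $\structure$-functions of prepared arguments one invokes the subanalytic/polynomially bounded preparation theorem \Cref{thm: S-preparation} together with the fact that a unit substituted into an analytic function remains prepared. For $\log$ of a prepared function $a\,\abs{y-\theta}^r e^g u$, we get $\log a+r\log\abs{y-\theta}+g+\log u$. This adds a new logarithmic scale term, and one must re-prepare the sum with respect to $y$. For $\exp$ of a prepared function, one splits the exponent into a part tending to $\pm\infty$ and a bounded part: the former is absorbed into the $\exp(g)$ factor and the latter into the unit. All of these constructions refine cylinders by intersecting with new $\LE$-conditions, which \Cref{rem: cell vs cylinder} guarantees stay single cylinders.

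I expect the main obstacle to be the $\log$/sum step. There one must prepare a finite sum of differently prepared terms, which requires comparing them on subcylinders and handling cancellation. The standard way is a Lion--Rolin style argument on a log--exp scale: near a center $\theta$, order the terms by their asymptotic size in the scale, factor out the dominant term, and show the remainder is a unit after shrinking the cylinder. The first thing I would try is to keep everything expressed in a fixed finite scale $\log_k\abs{y-\theta}$ and argue by induction on the scale length.
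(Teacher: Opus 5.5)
There is a genuine gap. Your outer reduction is correct and agrees with the paper, which cites this theorem and proves its effective form, \Cref{thm: LE cylinder decomposition}. It suffices to cover $\rReal^{n+1}$ by finitely many $\LE$-cylinders on which each $f_i$ has constant sign. A preparation $f=a(x)\cdot(\text{positive factors})$ gives this after intersecting the base with $\{\sign a=\sigma\}$; the paper reaches your target form via \Cref{thm: exp split} followed by \Cref{thm: LS preparation}.

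The problem is that this preparation is the whole content of the theorem, and the scheme you sketch for it does not go through. Your $\structure$-composition step rests on a false principle. Substituting units into an $\structure$-function does not keep things prepared: $F(z)=z-1$ applied to a unit taking values near $1$ changes sign. What is actually needed is to apply \Cref{thm: S-preparation} in an auxiliary space to the $\structure$-node $G(r,\exp(s))$ directly above a chosen maximal exponential subterm, with $w=\exp(s)$ as the \emph{last} variable. Two things then have to be handled, and your plan addresses neither.
\begin{enumerate}
\item The cells must be pulled back, i.e.\ one needs cylinders on which $\exp(s)-\alpha(r,\exp(s'))$ has constant sign. This is tractable only because, for $\alpha>0$, its sign is that of $s-\log\alpha(r,\exp(s'))$, which has one fewer maximal exponential subterm.
\item The factor $\abs{w-\theta}^{\lambda}$ must be dealt with, by a center dichotomy. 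If $\theta\not\equiv 0$, the center condition gives $\exp(s)=\theta\cdot\exp(s-\log\theta)$ with a \emph{restricted} exponential, which eliminates the node. If $\theta\equiv 0$, the factor is just $\exp(\lambda s)$.
\end{enumerate}

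The second missing ingredient is a complexity measure that actually drops. With induction on the number of nested operations, your $\log$ step asks you to prepare $\log a+r\log\abs{y-\theta}+g+\log u$, where $g$ is an arbitrary $\LE$-function. That is no easier than the original problem. Moreover, ``factor out the dominant term'' fails exactly when two terms are comparable and cancel.

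The paper never prepares such sums. In the zero-center case under a $\log$, it simply rewrites $\log G=\lambda s+\log a+\log u$ in the parse tree. Every step lowers the lexicographic syntactic complexity $(e,\#\mathcal{E},\ell,\#\mathcal{LE})$ of \Cref{def: syntactic complexity}, and induction is then applied to the whole function.

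In the purely logarithmic part, comparability is converted into a drop in logarithmic depth. If a scale term stays within bounded ratio of a function of $x$, one logarithm can be replaced by a restricted one (\Cref{lem: LS complexity drop}). Different centers are merged by splitting according to the size of the ratios $\rho_{i,j}$.

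Your $\exp$ step is also ill-posed, since a prepared exponent is a product rather than a sum. It is unnecessary anyway, because $\exp(h)$ is already of the required form with a positive factor.
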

To establish this, it suffices to show that for any finite collection $f_1,\dots,f_k\in\LE_n$, there is a finite cover of $\rReal^n$ by $\LE$-cylinders on which the functions $f_i$ have constant sign.
We do the same, but with effective control on the number and complexity of $\LE$-cylinders in this cover, and with polynomial dependence on $k$ and on the degrees of the functions $f_1,\dots,f_k$.
Our constructions are similar to those of~\cite{vdDriesSpeissegger2002}, however additional care is needed to ensure polynomial dependence on the number of functions $k$, which is crucial in order for the inductive argument to go through.

\subsection{Complexity of LE-functions}
We wish to obtain an effective version of \Cref{thm: LR qualitative} and hence need to introduce appropriate notions of complexity for $\LE$-functions and for $\LE$-cylinders.

We will measure the complexity of an $\LE$-function $f$ by considering the \emph{parse trees} associated to $f$, which are defined as follows.
	
\begin{definition}
	\label{def: parse tree}
	A \emph{parse tree} for a function $f\in\LE_{n}$ is a (rooted) tree whose structure records the iterated composition of functions in $\structure$, of $\exp$, and of $\log$ which yield $f$.

	An application of a function in $\structure$ corresponds to a node labeled ``$\structure$'', whose child nodes correspond to the inputs of this function.
	Similarly, applications of $\log$ or $\exp$ correspond to a node labeled ``$\log$'' or ``$\exp$'', respectively, whose child node is the input of this function.
	Leaves (i.e.\ childless nodes) are labeled by one of the variables $x_1,\dots,x_n$, which may appear multiple times.

	We will refer to the inner (non-leaf) nodes of this tree as ``$\structure$-nodes'', ``$\exp$-nodes'', or ``$\log$-nodes'', according to their label.
	We will similarly refer to leaves with a given label, e.g.\ ``$x_1$-leaves''.
\end{definition}

\begin{remark}
	\label{rem: contracting edges}
	Since the collection of functions in $\structure$ is closed under composition, we may contract the edge between any two $\structure$-nodes in a parse tree and obtain a tree describing the same $\LE$-function.
	Similarly, we may contract an edge between an $\exp$-node and $\log$-node, replacing in both cases the two nodes by a single $\structure$-node.

	We will always assume that we work with parse trees which are maximally contracted in this sense.
\end{remark}
	
We note that a single $\LE$-function may be described by different parse trees and, abusing notation, we identify from now on between an $\LE$-function and any corresponding (fixed)  parse tree.

\begin{definition}
	\label{def: LE complexity}
	We say a function $f\in\LE_{n}$ has \emph{complexity} $(\format,\degree)$ if its parse tree consists of at most $\format$ nodes, and its $\structure$-nodes correspond to functions in~$\FD[\format][\degree_i]$, where $\sum\degree_i\leq\degree$.
	We require $F\geq n$.
\end{definition}

\begin{definition}
	\label{def: cylinder complexity}
	We say that an $\LE$-cylinder has \emph{complexity} $(\format,\degree)$ if its walls, as well as the indicator function of its base, have complexity $(\format,\degree)$.
	
	In particular, the graph of an $\LE$-function $f:\rReal^n\to\rReal$ of complexity $(\format,\degree)$, considered as an $\LE$-cylinder with base $\rReal^n$, has complexity $(\format,\degree)$.
\end{definition}

\begin{lemma}
	\label{lem: complexity to indicator}
	Let $C\subset\rReal^{n+1}$ be an $\LE$-cylinder of complexity $(F,D)$.
	Then the indicator function of $C$ is an element of $\LE_{n+1}$ which may be taken to be of complexity $(\order[\format]{1},\polyfd)$.
\end{lemma}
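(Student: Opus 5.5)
We build the indicator function of $C$ explicitly, as a small parse tree whose inner nodes are the parse trees of the walls and of the base indicator $\chi_{C'}$, composed with a few fixed semialgebraic (hence $\structure$-definable) functions of low complexity. Write $x=\initial{x}{1}{n}$ and let $y=x_{n+1}$. The key observation is that the sign conditions defining $C$ can be converted into $\{0,1\}$-valued functions by composing with fixed semialgebraic functions. Such functions lie in $\FD[\order{1}][\order{1}]$, because $\structure$ contains the real field, and the filtration is monotone and compatible with the FD-structure operations.

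Let $h_{=}(t)=1$ if $t=0$ and $0$ otherwise, and let $h_{>}(t)=1$ if $t>0$ and $0$ otherwise. Both are semialgebraic functions $\rReal\to\rReal$ of format and degree $\order{1}$. Let $m(a,b)=ab$, which is semialgebraic of bounded complexity.
\begin{itemize}
\item In the graph case $C=\graph_{C'}(f)$ we set $\chi_C(x,y)=m\bigl(\chi_{C'}(x),h_{=}(y-f(x))\bigr)$.
\item In the interval case $C=(f_1,f_2)_{C'}$ we set $\chi_C=m\bigl(\chi_{C'},m(h_{>}(y-f_1(x)),h_{>}(f_2(x)-y))\bigr)$.
\item When one wall is $\pm\infty$, we simply omit the corresponding factor.
\end{itemize}
The subtraction $y-f(x)$ is itself an $\structure$-node, namely the polynomial $(u,v)\mapsto u-v$ applied to the leaf $y$ and the subtree $f$. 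After the contractions of \Cref{rem: contracting edges}, all of these semialgebraic $\structure$-nodes merge with adjacent $\structure$-nodes, or stay as $\order{1}$ extra nodes.

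Next we count the complexity.
\begin{itemize}
\item The number of nodes is at most the sum of the node counts of $\chi_{C'}$, $f_1$ and $f_2$, plus $\order{1}$. This gives $3F+\order{1}=\order[F]{1}$, and we also need $\geq n+1$, which holds since $F\geq n$.
\item The degree is the sum of the degrees of all $\structure$-nodes. The three subtrees contribute at most $3D$.
\item The new semialgebraic nodes have format and degree $\order{1}$, so they contribute $\order{1}$.
\end{itemize}

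The subtle step is contraction. When an $\structure$-node of the new wrapper is merged with the root $\structure$-node of, say, $f$, the merged function is a composition. Its degree in the filtration is bounded by $\polyfd$ via the sharp o-minimality axioms for composition and definable operations: the graph of a composition is a projection of an intersection of products of graphs. Its format stays $\order[F]{1}$. These degrees are then summed over $\order[F]{1}$ nodes, which gives total degree $\polyfd$.

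I expect the main obstacle to be exactly this bookkeeping under the FD axioms. Each node of a contracted tree must correspond to a function in some $\FD[F'][D_i]$ with $F'$ uniform. Since the format of a merged node can grow with the number of merges, we need $F'=\order[F]{1}$. This holds because only $\order{1}$ merges occur, each at one of the boundedly many roots.
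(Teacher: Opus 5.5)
Your proposal is correct and is essentially the paper's argument: the paper writes the indicator of $C$ as $\chi_{C'}(x)\cdot\chi_{+}(x_{n+1}-f_1(x))\cdot\chi_{+}(f_2(x)-x_{n+1})$, treats the graph and unbounded cases as similar, and reads off the complexity bound directly from the definition of tree-format and tree-degree. Your additional bookkeeping is a correct elaboration of what the paper leaves implicit: it checks that merging the new semialgebraic $\structure$-nodes with the roots of the subtrees, under the maximal-contraction convention, keeps the format at $O_F(1)$ and the total degree at $\poly_F(D)$.
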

\begin{proof}
	Let $C'\subset\rReal^n$ be the base of $C$.
	We treat the case where $C=(f_1,f_2)_{C'}$ for $f_1,f_2\in\LE_n$, the other cases being similar.
	By \Cref{def: cylinder complexity}, the indicator function $\chi_{C'}$ of $C'$ and the walls $f_1,f_2$ have complexity $(\format,\degree)$.
	Let $\chi_{+}$ be the indicator function of the positive reals.

	We may write the indicator function of $C$ as 
	\begin{equation}
		\chi_{C'}(\initial{x}{1}{n})\cdot\chi_{+}(x_{n+1}-f_1(\initial{x}{1}{n}))\cdot
			\chi_{+}(f_2(\initial{x}{1}{n})-x_{n+1}),
	\end{equation}
	from which the claim follows directly, by \Cref{def: LE complexity}.
\end{proof}

The complexity pairs of \Cref{def: LE complexity,def: cylinder complexity} form a double filtration on the collections of $\LE$-functions and of $\LE$-cylinders.
They are analogous to format and degree and will be used in \Cref{sec: so min} in order to define an FD-filtration on the definable sets of $\structure(\exp)$ and show that it is (up to equivalence of FD-filtrations) sharply o-minimal with sharp cell decomposition.

We stress, however, that the components $\format,\degree$ in \Cref{def: cylinder complexity} above are not themselves the same as the format and degree of the corresponding $\LE$-cylinder in the FD-filtration we construct in \Cref{sec: so min}.	
In order to avoid confusion, in the context of \Cref{def: LE complexity,def: cylinder complexity} we will refer to $\format$ and $\degree$ as \emph{tree-format} and \emph{tree-degree} to distinguish them from the format and degree of an FD-filtration.

Our effective version of \Cref{thm: LR qualitative} is the following.
\begin{theorem}
	\label{thm: LE cylinder decomposition}
	Let $f_1,\dots,f_k\in\LE_{n+1}$.
	Then there is a decomposition of $\rReal^{n+1}$ into $\LE$-cylinders, on each of which the functions $f_i$ have constant sign.
	
	If $f_1,\dots,f_k$ are of complexity $(F,D)$, we may take the number of cylinders in this decomposition to be $\polyfd[\format][\degree,k]$ and the complexity of each of the resulting cylinders to be $(\order[\format]{1},\polyfd[\format][\degree])$.
\end{theorem}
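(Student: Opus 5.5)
We will prove \Cref{thm: LE cylinder decomposition} by induction on the tree-format $\format$ and the number of variables, following the strategy of~\cite{vdDriesSpeissegger2002}. The analytic input is the effective $\LE$-preparation theorem (\Cref{thm: exp split}) together with the effective $\structure$-preparation theorem (\Cref{thm: S-preparation}), both from \Cref{sec: prepartion theorems}. The cylinder decomposition for $\rReal^{n+1}$ is obtained in two stages. First we decompose $\rReal^{n+1}$ into $\LE$-cylinders over bases in $\rReal^n$ on which each $f_i$ is \emph{prepared} in the last variable $x_{n+1}$. Then we use the prepared form to read off the sign of $f_i$ in terms of $\LE$-functions of $\initial{x}{1}{n}$ alone. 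This reduces the problem to a cylinder decomposition of $\rReal^n$, which we obtain by induction on $n$.

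The key steps are as follows.

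\textbf{Step 1: preparation.} Apply the preparation theorem simultaneously to $f_1,\dots,f_k$. We obtain $\polyfd[\format][\degree,k]$ cylinders $C$ of complexity $(\order[\format]{1},\polyfd[\format][\degree])$. On each $C$ there is a log--exp scale in $x_{n+1}$ (nested $\log$/$\exp$ of $x_{n+1}-\theta(x')$ with $\LE$ centers $\theta$) such that
\[ f_i = a_i(x')\,\cdot\, \text{(monomial in the scale)}\cdot \exp(g_i)\cdot u_i, \]
where $u_i$ is a unit with positive sign. The requirement that the number of pieces be polynomial in $k$, rather than exponential, is handled as in the remark preceding the theorem. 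We take the pairwise refinements of \Cref{rem: refine two covers}, and we choose the preparation centers from a common list: every $f_i$ contributes $\polyfd[\format][\degree]$ candidate center functions, and their total number is linear in $k$. Sorting these centers over the base produces a number of cylinders that is polynomial in $k$.

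\textbf{Step 2: reading off the sign.} On a prepared cylinder, $\sign f_i=\sign a_i(x')$, because the scale terms, the exponential and the unit are all positive. The one exception occurs when $f_i$ is identically zero on the fiber, and this is again detected by an $\LE$-function of $x'$. We therefore collect the $\polyfd[\format][\degree,k]$ functions $a_i$ together with the indicator functions of the bases; by \Cref{lem: complexity to indicator}, each of these has complexity $(\order[\format]{1},\polyfd[\format][\degree])$.

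\textbf{Step 3: induction on $n$.} Apply the induction hypothesis in $\rReal^n$ to the functions collected in Step 2. Since their tree-format is $\order[\format]{1}$, their tree-degree is polynomial, and their number is polynomial in $k$, the composite bounds stay polynomial. The bounds remain valid through the induction because the depth of the recursion is $n\le\format$, so the dependence on $\format$ stays $\order[\format]{1}$. We then refine each cylinder of Step 1 by the resulting decomposition of $\rReal^n$, pulling it back to the base and intersecting as in \Cref{rem: cell vs cylinder}. Taking $\max$ and $\min$ of the walls keeps the complexity at $(\order[\format]{1},\polyfd[\format][\degree])$.

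The main obstacle is Step 1. The preparation must be carried out with polynomial control on the number of pieces in both $\degree$ and $k$. It must also keep the complexity of the walls bounded in tree-format by $\order[\format]{1}$, independently of $\degree$ and $k$. Otherwise the tree-format would grow at each level of the induction on the parse tree of the $f_i$, namely the exp/log nesting handled in the proof of \Cref{thm: exp split}. I would first try to establish this by proving the preparation statement itself by induction on the parse-tree depth, with the centers and scales drawn from a shared pool, so that they stay uniform in $k$.
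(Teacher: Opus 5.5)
Your Steps 2--3 follow the paper's reduction. Preparation makes $\sign f_i$ equal to the sign of an $\LE$-function of $x$ alone. The paper splits this into \Cref{prop: LE to LS}, obtained from $f_i=g_i\exp(h_i)u_i$ in \Cref{thm: exp split}, and \Cref{prop: sign of base}, obtained from \Cref{thm: LS preparation}. One then applies the statement on $\rReal^n$ and refines two covers.

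The gap is Step 1, which carries all of the content and which you leave open. You cannot import it from \Cref{thm: exp split} as an independent input, because the proofs of \Cref{thm: exp split,thm: LS preparation} themselves invoke \Cref{thm: LE cylinder decomposition}, both for lower syntactic complexity and on $\rReal^n$. Everything is one simultaneous induction, and the measures you propose (tree-format, parse-tree depth) cannot drive it. The argument needs rewritings such as $\exp(s_{i,q})=\exp(s_{i,q}-\log\theta_i)\cdot\theta_i$ with a restricted exponential when the center $\theta_i\not\equiv 0$, and $\log g_i=\lambda_i s_{i,q}+\log a_i+\log u_i$ when $\theta_i\equiv 0$. These rewritings, and the auxiliary functions $s_{i,q}-\log(\alpha\comp T'_i)$ to which the hypothesis is applied, have tree-format $\order[\format]{1}$, possibly larger than $\format$, and need not have smaller depth. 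What strictly decreases is the lexicographically ordered tuple $(e,\#\mathcal{E},\ell,\#\mathcal{LE})$ of \Cref{def: syntactic complexity}, together with the logarithmic depth in the $\LA$ part (via \Cref{lem: LS complexity drop} and \Cref{lem: kappa}).

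Your mechanism for polynomial dependence on $k$ also fails as stated. Iterating the two-cover refinement of \Cref{rem: refine two covers} over the $k$ functions multiplies the sizes of the covers, which is exponential in $k$; the paper points out exactly this in \Cref{sec: ell 0 pullback of a cell}. The missing idea has three parts:
\begin{enumerate}
\item Give each $f_i$ its own map $T_i=(r_i,\exp(s_i))$ into a common auxiliary space $\rReal^{p+q}$ of dimension $\order[\format]{1}$.
\item Apply $\structure$-preparation once to all the $G_i$ there. This is polynomial in $k$ because the dimension is fixed.
\item Obtain the common refinement of all pulled-back cells by a single application of the inductive hypothesis to the $\polyfd[\format][\degree,k,1/\mu]$ functions $\chi_{D'}\comp T'_i$, $\alpha\comp T'_i$, $s_{i,q}-\log(\alpha\comp T'_i)$, which have lower syntactic complexity.
\end{enumerate}
This is why polynomial dependence on the number of functions must be built into the inductive statement itself.

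Similarly, sorting a common list of centers over the base is correct, but it does not by itself convert $\abs{y-\overline\theta_j}^{\lambda_j}$ into a power of $\abs{y-\overline\theta_{i_0}}$. The paper does this by cutting along the ratios $\rho_{i,j}$ on a geometric grid with ratio $\exp(\mu/\polyfd)$. The resulting common log-scale is what lets the induction on logarithmic depth go through.
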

This theorem will follow from more precise preparation theorems which we give in \Cref{sec: prepartion theorems}.

\subsection{Sharp o-minimality and sharp cell decomposition}
\label{sec: so min}
In this subsection, we prove \Cref{thm: S exp so min} using  \Cref{thm: LE cylinder decomposition}.
Already from \Cref{thm: LR qualitative}, we have that every definable set in $\structure(\exp)$ is a finite union of $\LE$-cylinders.
We may thus define the following FD-filtration on $\structure(\exp)$.
\begin{definition}
	\label{def: set complexity}
	Let $A\subset \rReal^n$ be in $\structure(\exp)$ and let $\format\geq n$.
	We will say that $A\in\structure(\exp)_{\format,\degree}$ if it is the union of $\polyfd[\format][\degree]$-many $\LE$-cylinders of complexity $(\order[F]{1},\polyfd[\format][\degree])$.
\end{definition}
The precise choices of asymptotic bounds in this definition will be determined by the proofs of the results of this section.
A simple example of this is the following.

\begin{proposition}
	\label{prop: complexity translation}
	Let $A\subset\rReal^{n}$ be an $\LE$-set whose indicator function, as an $\LE$-function, is of complexity $(\format,\degree)$.
	Then $A\in\structure(\exp)_{\order[\format]{1},\polyfd}$.
\end{proposition}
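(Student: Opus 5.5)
The plan is to apply \Cref{thm: LE cylinder decomposition} to the single function $\chi_A\in\LE_n$. If $n=0$ there is nothing to prove, so write $\rReal^n=\rReal^{(n-1)+1}$. By hypothesis $\chi_A$ has complexity $(\format,\degree)$ with $\format\geq n$, so \Cref{thm: LE cylinder decomposition} with $k=1$ applies. It gives a decomposition of $\rReal^n$ into $\polyfd[\format][\degree,1]=\polyfd$-many $\LE$-cylinders, each of complexity $(\order[\format]{1},\polyfd)$, on each of which $\chi_A$ has constant sign.

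Since $\chi_A$ takes only the values $0$ and $1$, having constant sign on a cylinder means that the cylinder lies entirely in $A$ or entirely in its complement. Hence $A$ is exactly the union of those cylinders of the decomposition on which $\chi_A$ is positive. This exhibits $A$ as a union of $\polyfd$-many $\LE$-cylinders of complexity $(\order[\format]{1},\polyfd)$.

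It remains to match this with \Cref{def: set complexity}, with $\format':=\order[\format]{1}$ in place of $\format$ and $\degree':=\polyfd$ in place of $\degree$. We need at most $\poly_{\format'}(\degree')$ cylinders, each of complexity $(\order[\format']{1},\poly_{\format'}(\degree'))$. We may take $\format'\geq\format\geq n$, and $\degree'\geq\degree$ after enlarging the polynomial. Then any polynomial in $\degree$ depending on $\format$ is bounded by a polynomial in $\degree'$ depending on $\format'$, and $\order[\format]{1}\leq\format'$.

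The only point needing care is this bookkeeping. One has to confirm that the implicit polynomials in \Cref{def: set complexity} are chosen monotone and large enough to absorb those produced by \Cref{thm: LE cylinder decomposition}; the paper says these choices are fixed by the proofs of this section. One should also dispose of the degenerate case where no cylinder lies in $A$, that is $A=\emptyset$, which is the empty union. There is no real obstacle beyond this.
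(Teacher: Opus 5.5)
Your proposal is correct and is essentially the paper's own proof. The paper likewise applies \Cref{thm: LE cylinder decomposition} to the indicator function of $A$, writes $A$ as the union of the resulting cylinders on which that indicator is positive, and absorbs the bounds by choosing the asymptotic constants in \Cref{def: set complexity} large enough. Your remarks on $n=0$, on the requirement $F\geq n$, and on the monotonicity of the implicit polynomials are bookkeeping that the paper leaves implicit.
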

\begin{proof}
	Applying \Cref{thm: LE cylinder decomposition} to the indicator function of $A$, we obtain $\polyfd$-many $\LE$-cylinders of complexity $(\order[\format]{1},\polyfd)$ whose union is $A$.
	We may choose the asymptotic bounds in \Cref{def: set complexity} large enough such that the result follows.
\end{proof}
Note that we make no claim in the converse direction, that is an $\LE$-cylinder in $\structure(\exp)_{\format,\degree}$ might have large tree-format and tree-degree (compare with \Cref{lem: complexity to indicator}, where we assume an $\LE$-cylinder has low tree-format and tree-degree and deduce the same for its indicator).

We restate \Cref{thm: S exp so min} using the filtration defined in \Cref{def: set complexity}:
\begin{theorem}
	The FD-filtration $\{\structure(\exp)_{\format,\degree}\}$ is equivalent to a sharply o-minimal filtration with sharp cell decomposition.
\end{theorem}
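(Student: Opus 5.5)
The plan is to verify, one at a time, the axioms of a sharply o-minimal FD-filtration with sharp cell decomposition, as reviewed in \cite[Section 2.1]{BinyaminiCarmonNovikovComplexCells}, for $\{\structure(\exp)_{\format,\degree}\}$. The constants in \Cref{def: set complexity} are left free and enlarged at the end to absorb the finitely many polynomial losses; this yields a filtration equivalent to the given one. The key tool throughout is to pass between a union of $\LE$-cylinders and a single $\LE$-function of controlled tree-complexity. Concretely, if $A=\bigcup_{j\le M} C_j$ with each $C_j$ of tree-complexity $(\order[\format]{1},\polyfd)$, then by \Cref{lem: complexity to indicator} each $\chi_{C_j}$ is an $\LE$-function of the same type. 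However, writing $\chi_A=1-\prod_j(1-\chi_{C_j})$ produces a tree with $M$ leaves-worth of nodes, so the tree-format would grow with $M$. To avoid this, I will never form $\chi_A$ globally. Instead, I will apply \Cref{thm: LE cylinder decomposition} to the collection $\{\chi_{C_j}\}$ of $k=M$ functions. This is the point where polynomial dependence on $k$ is essential.

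\textbf{Boolean operations and the basic axioms.} For $A,B$ of format $\format$ and degree $\degree$, I take the defining cylinders of both and apply \Cref{thm: LE cylinder decomposition} to all their indicator functions together, with $k=\polyfd$. This gives $\polyfd$ cylinders of complexity $(\order[\format]{1},\polyfd)$ on which every indicator has constant sign. Unions, intersections and complements are then unions of sub-collections of these cylinders. Products $A\times B$ follow similarly: I pull indicators back along the coordinate projections and decompose again, with the format adding. Semialgebraic sets, and more generally sets in $\FD[\format][\degree]$, lie in $\structure(\exp)_{\order[\format]{1},\polyfd}$ by \Cref{prop: complexity translation}, because their indicators are single $\structure$-nodes. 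The graph of $\exp$ is handled the same way.

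\textbf{Projections.} Projection to the first $n$ coordinates of an $\LE$-cylinder with base $C'$ is just $C'$. I then apply \Cref{prop: complexity translation} to $\chi_{C'}$, whose tree-complexity is $(\order[\format]{1},\polyfd)$ by \Cref{def: cylinder complexity}. A union of $\polyfd$ such sets is again handled by the Boolean step. Projecting away several coordinates is done by iteration, which costs $\order[\format]{1}$ rounds of polynomial composition.

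\textbf{Sharp o-minimality.} For $A\subset\rReal$, an $\LE$-cylinder in $\rReal$ is a point or an interval, so $A$ has at most $\polyfd$ components.

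\textbf{Sharp cell decomposition.} This is the main obstacle. The cylinders produced by \Cref{thm: LE cylinder decomposition} have possibly disconnected bases and discontinuous walls (\Cref{rem: cell vs cylinder}), whereas genuine cells are required. I would proceed by induction on $n$ and try the following. First decompose into cylinders compatible with the given sets. Next apply cell decomposition in dimension $n$, which holds by induction, to each base $C'$ together with the sets where each wall fails to be continuous. These discontinuity sets are definable with controlled format and degree by the first-order expression of continuity and the projection and Boolean steps above. Each wall, restricted to a cell on which it is continuous, then yields cells over a connected base. An alternative, if the continuity sets prove awkward, is to use that the preparation theorems give analytic walls on each piece. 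The delicate point is that every step must remain polynomial in $\degree$ with format $\order[\format]{1}$. This holds because each step invokes the earlier operations a bounded number of times, depending on $\format$ only.
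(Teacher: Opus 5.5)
Your treatment of Boolean operations, products, projections and the component count is fine. It is essentially the paper's argument for weak-sharp o-minimality. You decompose using the indicators of the defining cylinders, while the paper uses pairwise differences of walls and base indicators; either works.

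\textbf{The gap is in sharp cell decomposition}, at the step ``each wall, restricted to a cell on which it is continuous, then yields cells over a connected base.'' Making the base decomposition compatible with the discontinuity set $\operatorname{Disc}(g)$ of a wall $g$ only helps on cells disjoint from it. By o-minimality, these include all full-dimensional cells. A lower-dimensional cell $B\subset\operatorname{Disc}(g)$ is merely compatible with $\operatorname{Disc}(g)$, and $g|_B$ can still be discontinuous.

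For example, take $g=\chi_{\{x_2=0\}}\cdot(1+\chi_{\{x_1>0\}})$. Then $\operatorname{Disc}(g)=\{x_2=0\}$, which may itself be a cell of your decomposition, yet $g$ restricted to it jumps at $x_1=0$. Your plan leaves such cells untreated. Your fallback does not rescue it either. The preparation theorems of \Cref{sec: prepartion theorems} only produce $\LE$-cylinders, whose walls are arbitrary $\LE$-functions and need not be continuous (\Cref{rem: cell vs cylinder}). The later versions that do produce cells rely on \Cref{thm: sharp cd} itself.

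\textbf{The missing idea is an extra induction for the lower-dimensional base cells.} The paper writes such a cell as a graph $x_j=h(\cdot)$ of a continuous function over a lower-dimensional base cell. It pulls the walls back along $x\mapsto(x,h(x))$ and applies the theorem inductively, in one fewer ambient dimension, to the pulled-back cylinders. It then lifts the resulting cells back via the graph map. Alternatively, you could iterate: refine each such $B$ using $\operatorname{Disc}(g|_B)$, which has dimension $<\dim B$. This stops after at most $n$ rounds, each of polynomial cost.

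\textbf{A smaller point concerns the framing.} You cannot obtain the axioms of a sharply o-minimal filtration for $\{\structure(\exp)_{F,D}\}$ by enlarging the constants in \Cref{def: set complexity} once. The strict axioms require some of complement, projection and intersection to preserve the format. Each application of \Cref{thm: LE cylinder decomposition} moves you from tree-format $a(F)$ to some larger $c(a(F))$, so a one-time enlargement cannot achieve that. What one actually proves is weak-sharp o-minimality, with the $F$-dependence in \Cref{def: set complexity} chosen recursively in $F$ so that these operations land in format $F+1$. One then invokes \cite[Proposition 1.18]{BinyaminiNovikovZak2022} to pass to an equivalent sharply o-minimal filtration.
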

By~\cite[Proposition 1.18]{BinyaminiNovikovZak2022}, it is enough to show that $\{\structure(\exp)_{\format,\degree}\}$ admits sharp cell decomposition and is \emph{weak-sharply o-minimal} in the sense of~\cite[Definition 1.7]{BinyaminiNovikovZak2022} --- that is, we allow projections, complements, and finite intersections to increase the format by $1$.
We begin with weak-sharp o-minimality.

\begin{theorem}
	\label{thm: s exp sharp}
	The filtration $\{\structure(\exp)_{\format,\degree}\}$ is weak-sharply o-minimal.
\end{theorem}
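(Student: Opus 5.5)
We verify the axioms of a weak-sharply o-minimal FD-filtration in the sense of~\cite[Definition 1.7]{BinyaminiNovikovZak2022}: a set in $\structure(\exp)_{\format,\degree}$ in $\rReal$ has $\polyfd$ connected components; the filtration is closed, with format increasing by at most $1$ and degree changing polynomially, under products, finite unions and intersections, complements, projections, and fibers; and it contains the semialgebraic sets of appropriate format and degree. In every case the idea is to convert a set in $\structure(\exp)_{\format,\degree}$ into $\LE$-functions of tree-complexity $(\order[\format]{1},\polyfd)$ using \Cref{lem: complexity to indicator}, carry out the operation at the level of $\LE$-functions, and then convert back with \Cref{thm: LE cylinder decomposition}, as in \Cref{prop: complexity translation}. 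The asymptotic constants in \Cref{def: set complexity} are fixed only at the end, large enough for all steps to close up. The one thing to track is that every intermediate tree-format is $\order[\format]{1}$, so that polynomial bounds compose into polynomial bounds.

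\emph{Counting components.} Let $A\subset\rReal$ be a union of $\polyfd$ cylinders of complexity $(\order[\format]{1},\polyfd)$. Each cylinder in $\rReal$ has base $\rReal^0$, so it is a point or an interval, and the count follows at once. \emph{Unions} just add up the numbers of cylinders. \emph{Intersections:} by \Cref{rem: refine two covers}, a pairwise intersection of cylinders is again a single cylinder, with walls given by $\max$ and $\min$ of the original walls and with base an intersection. Since $\max$ and $\min$ are $\structure$-functions, the tree-complexity stays $(\order[\format]{1},\polyfd)$. \emph{Products and fibers} are handled the same way: products by composing with coordinate projections, and fibers by substituting constants. Constants must be absorbed into $\structure$-nodes of bounded degree, which is legitimate because constants are of format and degree $1$ in $\structure$.

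\emph{Complements} are the first substantive step. Write $A=\bigcup_{j\le M}C_j$ with $M=\polyfd$. By \Cref{lem: complexity to indicator}, each $\chi_{C_j}$ is an $\LE$-function of complexity $(\order[\format]{1},\polyfd)$. Apply \Cref{thm: LE cylinder decomposition} to the $k=M$ functions $\chi_{C_1},\dots,\chi_{C_M}$. This yields $\poly_{\order[\format]{1}}(\polyfd,M)=\polyfd$ cylinders of complexity $(\order[\format]{1},\polyfd)$ on which every $\chi_{C_j}$ is constant. The complement of $A$ is the union of those cylinders on which all of them vanish. The polynomial dependence on $k$ in \Cref{thm: LE cylinder decomposition} is exactly what is needed here. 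If we instead combined the indicators into a single function $1-\prod_j(1-\chi_{C_j})$, the tree-format would grow with $M$, which is not allowed.

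\emph{Projections} are the main obstacle. Let $\pi:\rReal^{n+1}\to\rReal^n$. The image under $\pi$ of a cylinder with base $C'$ is $C'$ itself, provided the cylinder is nonempty over each base point. For an interval cylinder this holds by the strict inequality of the walls, and for a graph it is automatic. So $\pi(A)$ is the union of the bases $C'_j$. Each base comes with an indicator function of complexity $(\order[\format]{1},\polyfd)$ by \Cref{def: cylinder complexity}, so \Cref{prop: complexity translation} places each $C'_j$ in $\structure(\exp)_{\order[\format]{1},\polyfd}$, and we take their union. The remaining concern is the case of projection along a coordinate other than the last one. We reduce it to the last-coordinate case by a coordinate permutation: to express $A$ in permuted coordinates, apply the complement argument to the permuted indicators $\chi_{C_j}\comp\sigma$. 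These have the same tree-complexity, since permuting the leaves of a parse tree does not change its size or degrees.

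Semialgebraic sets are $\LE$-sets whose indicators are given by $\structure$-nodes of format and degree controlled by the polynomial data. So one more application of \Cref{prop: complexity translation} finishes the verification, after we enlarge the constants in \Cref{def: set complexity} once to absorb all the $\order[\format]{1}$ and $\poly$ losses above.
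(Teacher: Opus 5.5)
Your overall plan matches the paper's. You convert sets to $\LE$-functions via \Cref{lem: complexity to indicator}, apply \Cref{thm: LE cylinder decomposition}, and choose the constants in \Cref{def: set complexity} recursively in $\format$. Your treatment of components, unions, products, complements and projections is sound. Applying the decomposition theorem directly to the indicators in $\rReal^{n+1}$ is a fine substitute for the paper's stacking of ordered walls over a decomposition of $\rReal^n$.

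The gap is the intersection step. You must show that $\bigcap_{i=1}^k A_i\in\structure(\exp)_{\format+1,\degree}$ with $\degree=\sum_i\degree_i$ for arbitrary $k$, with bounds independent of $k$; this is what the paper verifies. Your pairwise argument only covers $k=2$. For general $k$, distributing $\bigcap_i\bigcup_j C_{i,j}$ gives $\prod_i\poly_{\format}(\degree_i)$ terms. This is exponential in $k$ when all $\degree_i=1$ (so $\degree=k$). Each term is a cylinder whose walls are a $\max$ or $\min$ of $k$ functions, so its tree-format is of order $k\cdot\order[\format]{1}$ rather than $\order[\format]{1}$. This is exactly the objection you yourself raised against $1-\prod_j(1-\chi_{C_j})$ in the complement step. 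Iterating the two-set case does not rescue it, because each round of pairwise intersections costs a unit of format.

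The repair is your own complement argument:
\begin{itemize}
\item Apply \Cref{thm: LE cylinder decomposition} once to the indicators of all cylinders making up all the $A_i$. There are $\sum_i\poly_{\format}(\degree_i)\leq\poly_{\format}(\degree)$ of them, each of complexity $(\order[\format]{1},\poly_{\format}(\degree))$.
\item Take the union of those output cylinders on which, for every $i$, the indicator of some cylinder of $A_i$ equals $1$.
\end{itemize}
This is essentially what the paper does. It makes a single simultaneous application of the theorem, to pairwise differences of all walls and base indicators with $f_1\equiv 0$. That one decomposition is compatible with intersections, complements and projections at once. The polynomial dependence on the number of functions in \Cref{thm: LE cylinder decomposition} is precisely what makes the $k$-fold intersection go through.
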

\begin{proof}

	We first bound the number of connected components of a definable subset of $\rReal$.
	Let $A\subset \rReal$ be in $\structure(\exp)_{\format,\degree}$.
	Since an $\LE$-cylinder in $\rReal$ is connected, \Cref{def: set complexity} implies that~$A$ has at most $\polyfd$ connected components.

	We now show that $\{\structure(\exp)_{\format,\degree}\}$ is weak-sharply generated.
	Let $P\in\rReal[x_1,\dots,x_n]$ be of degree $d$.
	Since any function in $\structure$ is also an $\LE$-function, a semialgebraic cell is, in particular, an $\LE$-cylinder.
	Thus the existence of an effective semialgebraic cell decomposition of $\rReal^n$ compatible with $P$ (for example, using the algebraic CPT~\cite[Theorem 8]{BinyaminiNovikov2019}) yields that $\{P=0\}$ is in $\structure(\exp)_{n,d}$ upon choosing suitable asymptotic bounds in \Cref{def: set complexity}.

	Let $A\subset\rReal^{n}$ be in $\structure(\exp)_{\format,\degree}$.
	We bound the format and degree of $A\times \rReal$ and $\rReal\times A$.
	Write $A$ as the union of $\polyfd$-many $\LE$-cylinders of complexity $(\order[\format]{1},\polyfd)$.
	By \Cref{lem: complexity to indicator}, one may choose for each of these cylinders an indicator function which is an $\LE$-function of complexity $(\order[\format]{1},\polyfd)$.
	We may thus take the dependence on $\format$ in the asymptotic bounds in \Cref{def: set complexity} to be such that $A\times\rReal\in \structure(\exp)_{\format+1,\degree}$, and similarly for $\rReal\times A$.

	Let $A_1,\dots,A_k\subset\rReal^{n+1}$ where $A_i\in\structure(\exp)_{\format,\degree_i}$.
	Let $D=\sum D_i$.
	It is immediate from \Cref{def: set complexity} that $\bigcup A_i\in \structure(\exp)_{\format,\degree}$ since $\sum D_i^\alpha\leq (\sum D_i)^\alpha$ for any $D_1,\dots,D_k,\alpha\geq 1$.

	It remains to estimate the format and degree of the intersection $\bigcap A_i$, of the projections $\pi(A_i)$ of each $A_i$ to $\rReal^n$, and of the complements $\rReal^{n+1}\setminus A_i$ for each $A_i$.
	Write each $A_i$ as the union of $\polyfd[\format][\degree_i]$-many $\LE$-cylinders of complexity $(\order[\format]{1},\polyfd[\format][\degree_i])$.
	Let $\{f_1,\dots, f_N\}\subset\LE_n$ be the collection of $N=\polyfd[\format][\degree,k]$ functions determining these cylinders --- their walls and the indicator functions of their bases.
	We may assume without loss of generality that $f_1\equiv 0$.
	Apply \Cref{thm: LE cylinder decomposition} to the collection of all pairwise differences of elements in $\{f_1,\dots, f_N\}$ and let $\mathcal{A}$ be the resulting collection of $\LE$-cylinders covering $\rReal^n$.

	The cylinders in $\mathcal{A}$ are compatible with the projections $\pi(A_i)$ since they are compatible with the indicator functions of the bases of the cylinders composing each of the $A_i$.
	Over each cylinder in $\mathcal{A}$, the functions $\{f_1,\dots, f_N\}$ are uniformly and strictly ordered, and so they determine $\LE$-cylinders in $\rReal^{n+1}$ which are compatible with each $A_i$.
	In particular, these $\LE$-cylinders are compatible with the intersection $\bigcap A_i$ and with the complements $\rReal^{n+1}\setminus A_i$.
	We may thus choose the dependence on $\format$ in the asymptotic bounds in \Cref{def: set complexity} to be such that $\pi(A_i)$, $\rReal^{n+1}\setminus A_i$, and $\bigcap A_i$ are in $\structure(\exp)_{\format+1,\degree}$.
\end{proof}

We now prove sharp cell decomposition.
For later convenience, we do this with respect to the following kind of cells.
\begin{definition}
	\label{def:LE cell}
	Let $C\subset\rReal^{n}$ be an $\LE$-cylinder.
	We say that it is an \emph{$\LE$-cell} if either $n=0$ or else if its base is an $\LE$-cell and its walls are continuous (when restricted to this base).
	An $\LE$-cell will be called \emph{admissible} if it is compatible with all coordinate hyperplanes.
\end{definition}

\begin{theorem}
	\label{thm: sharp cd}
	The FD-filtration $\{\structure(\exp)_{\format,\degree}\}$ has sharp cell decomposition.
	More precisely, for $A_1,\dots,A_k\subset\rReal^{n+1}$ which are in $\structure(\exp)_{\format,\degree}$, there exists a decomposition of $\rReal^{n+1}$ into $\polyfd[\format][\degree,k]$ cells in $\structure(\exp)_{\order[\format]{1},\polyfd[\format][\degree]}$ which are compatible with each~$A_i$.
	We may take these cells to be admissible $\LE$-cells.
\end{theorem}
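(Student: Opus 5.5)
We argue by induction on $n$, in the spirit of the proof of \Cref{thm: s exp sharp}, and produce at each level a decomposition into admissible $\LE$-cells. We first reduce to functions. Write each $A_i$ as a union of $\polyfd$-many $\LE$-cylinders of complexity $(\order[\format]{1},\polyfd)$. Let $\{f_1,\dots,f_N\}\subset\LE_n$, with $N=\polyfd[\format][\degree,k]$, be the walls and base indicators of these cylinders. To this list we add $f\equiv 0$ and the coordinate functions $x_1,\dots,x_n$; the latter are needed for admissibility.

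Next we make the walls ordered and continuous. Apply \Cref{thm: LE cylinder decomposition} to all pairwise differences $f_j-f_l$. This yields a cover $\mathcal{A}$ of $\rReal^n$ by $\polyfd[\format][\degree,k]$ $\LE$-cylinders of complexity $(\order[\format]{1},\polyfd)$, over each of which the $f_j$ are strictly ordered or equal. The walls need not be continuous, however, and the bases need not be cells. To fix continuity, we also require compatibility with the discontinuity locus of each $f_j$. This locus is an $\LE$-set of dimension $<n$, and its indicator can be written as an $\LE$-function of complexity $(\order[\format]{1},\polyfd)$, since it is expressible by a first-order formula. However, first-order formulas involve quantifiers, which are not $\LE$-terms. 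We therefore proceed differently: the set $\{f_j \text{ continuous}\}$ is definable from the $f_j$ with $\order[\format]{1}$ quantifiers. Weak-sharp o-minimality (\Cref{thm: s exp sharp}) then places it in $\structure(\exp)_{\order[\format]{1},\polyfd}$ through boundedly many projections and complements, each of which raises the format by only $1$. This set is thus a union of $\polyfd$ $\LE$-cylinders of controlled complexity, and we add their walls and base indicators to the list and redo the decomposition. Iterating this a bounded number of times, namely $\order[\format]{1}$, is the standard o-minimal argument, since the bad locus drops in dimension at each round.

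Then we apply the induction hypothesis. Each base cylinder in $\mathcal{A}$, together with the sets of discontinuity, is a subset of $\rReal^n$ lying in $\structure(\exp)_{\order[\format]{1},\polyfd}$. We have $\polyfd[\format][\degree,k]$ such sets, and by induction on $n$ we decompose $\rReal^n$ into admissible $\LE$-cells compatible with all of them. The number of cells is $\poly$ in the number of sets, which is acceptable because $k$ enters polynomially. Over each base cell $B$, the restricted functions $f_j|_B$ are continuous and strictly ordered. Since $B$ is connected, being a cell, the ordering is uniform. The graphs of the $f_j$ and the intervals between consecutive graphs are therefore $\LE$-cells with base $B$. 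These cells are compatible with every $A_i$, exactly as in the proof of \Cref{thm: s exp sharp}, and compatible with $\{x_{n+1}=0\}$ because $f\equiv0$ is in the list. The complexity bounds follow from \Cref{prop: complexity translation} together with \Cref{lem: complexity to indicator}, after enlarging the implicit constants in \Cref{def: set complexity}.

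The main obstacle is continuity of the walls. \Cref{thm: LE cylinder decomposition} gives only sign conditions, and we must make sure that passing to the continuity loci costs only $\order[\format]{1}$ in tree-format and keeps polynomial dependence on $\degree$ and $k$. The first thing we would try is the quantifier argument via \Cref{thm: s exp sharp} sketched above, bounding the number of rounds by $n$ through the dimension drop. If that argument is too lossy, we would instead extract continuity from the preparation theorems of \Cref{sec: prepartion theorems}, which should give analytic walls directly on each piece.
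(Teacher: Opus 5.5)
You have a genuine gap at the sentence ``Over each base cell $B$, the restricted functions $f_j|_B$ are continuous.'' Compatibility of $B$ with the discontinuity locus $\Delta_j$ of $f_j$ gives this only when $B\cap\Delta_j=\emptyset$, because then $f_j$ is continuous at every point of $B$. Cells $B\subset\Delta_j$, of dimension $<n$, cannot be avoided, and on them $f_j|_B$ can jump. Take $f(x_1,x_2)=0$ for $x_2\neq0$, and $f(x_1,0)=1$ or $2$ according as $x_1\le0$ or $x_1>0$. Its discontinuity locus is the whole line $\{x_2=0\}$. That line is compatible with $\Delta$ and with the sign of $f$, so it may well be one of your base cells. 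Yet the wall $f$ over it is discontinuous at the origin, and its graph is not an $\LE$-cell.

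Your iteration does not repair this as written. Adding the walls and base indicators of the cylinders making up $\Delta_j$ and redoing the sign decomposition only re-imposes compatibility with $\Delta_j$. Nothing in the procedure looks at $f_j$ restricted to $\Delta_j$, so no locus is shown to drop in dimension.

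The paper handles this case separately. It uses compatibility with $\Delta_j$ only to get continuous walls over the $n$-dimensional base cells. Over a lower-dimensional base cell, say the graph of $g$ over $C'\subset\rReal^{n-1}$, it pulls the walls back to $\tilde f_j(x')=f_j(x',g(x'))$. It then applies the theorem inductively to the pulled-back cylinders, which are subsets of $\rReal^n$, and lifts the resulting admissible cells back along $g$.

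Alternatively, your dimension-drop idea works if you iterate with \emph{relative} discontinuity sets:
\begin{itemize}
\item Set $\Delta_j^0=\rReal^n$, and let $\Delta_j^{r+1}$ be the set of points of $\Delta_j^r$ at which $f_j|_{\Delta_j^r}$ is discontinuous.
\item By o-minimality $\dim\Delta_j^{r+1}<\dim\Delta_j^r$, so $r\le n$ suffices.
\item Each $\Delta_j^{r+1}$ is defined from $f_j$ and $\Delta_j^r$ by an $\epsilon$--$\delta$ formula with $O(1)$ quantifiers. Weak-sharp o-minimality together with $n\le\format$ therefore puts all of them in $\structure(\exp)_{\order[\format]{1},\polyfd}$.
\item If $B$ is compatible with every $\Delta_j^r$, take $r$ maximal with $B\subset\Delta_j^r$. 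Then $B\cap\Delta_j^{r+1}=\emptyset$, so $f_j|_B$ is continuous.
\end{itemize}
With this modification the rest of your argument goes through. Two small corrections: the uniform ordering over $B$ comes from $B$ lying in a single cylinder of $\mathcal{A}$, not from connectedness. Also, adding $x_1,\dots,x_n$ as walls is unnecessary, since admissibility in $x_1,\dots,x_n$ comes from the inductive hypothesis and in $x_{n+1}$ from $f\equiv0$.
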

\begin{proof}
	Since $\format\geq n$ by \Cref{def: set complexity}, it suffices to prove the claim with all instances of $\poly_{\format}$ and $O_{\format}$ replaced with $\poly_{\format,n}$ and $O_{\format,n}$, respectively.
	We proceed similarly to the proof of \Cref{thm: s exp sharp} and by induction on~$n$.
	By \Cref{def: set complexity}, each of the sets $A_i$ is a union of $\polyfd$-many $\LE$-cylinders of complexity $(\order[\format]{1},\polyfd)$.
	
	Let $\{f_1,\dots,f_N\}\subset\LE_n$ be the collection of $N=\polyfd[\format][\degree,k]$ functions determining this collection of cylinders --- their walls and the indicator functions of their bases.
	We may assume without loss of generality that $f_1\equiv 0$.
	Applying \Cref{thm: LE cylinder decomposition} to all pairwise differences of functions in $\{f_1,\dots,f_N\}$, we obtain a decomposition of $\rReal^n$ into $\LE$-cylinders.

	Let $\mathcal{A}$ be the collection of subsets of $\rReal^n$ consisting of these cylinders and of the discontinuity sets of the functions $f_1,\dots,f_N$.
	Since $\{\structure(\exp)_{\format,\degree}\}$ is weak-sharply o-minimal (by \Cref{thm: s exp sharp}), these discontinuity sets are all in $\structure(\exp)_{(\order[\format]{1},\polyfd)}$ (using the standard $\epsilon$--$\delta$ formulation).
	
	We inductively apply the theorem to the sets in $\mathcal{A}$, obtaining a decomposition of $\rReal^n$ into admissible $\LE$-cells which is of suitable size and complexity.
	We note that, by o-minimality of $\structure(\exp)$, cells contained in a discontinuity set of one of the functions $f_i$ must be of dimension less than $n$.

	Over each of the resulting cells, the walls of the cylinders composing the sets $A_i$ are uniformly and strictly ordered, and compatible with $0$.
	Over cells of dimension $n$, these walls are continuous and so determine admissible $\LE$-cells.
	Denote this collection of admissible $\LE$-cells by $\mathcal{D}_1$.
	
	Over cells of lower dimension, we proceed by induction on $n$ in the following way.
	Let $C\subset\rReal^n$ be one of the resulting admissible $\LE$-cells which is of dimension less than~$n$.
	For simplicity, we assume that $C$ is the graph of an $\LE$-function $g$ over a base $C'\subset\rReal^{n-1}$, the other cases are treated similarly.
	For each of the functions $f_i$, let $\tilde f_i:C'\to\rReal$ be given by 
	\begin{equation}
		\tilde f_i(x_1,\dots,x_{n-1})=f_i (x_1,\dots,x_{n-1},g(x_1,\dots,x_{n-1})).
	\end{equation}
	The functions $\{\tilde f_1,\dots,\tilde f_N\}$ determine pullbacks of the cylinders composing the sets $\mathcal{A}_i$ over $C$ to cylinders over $C'$.
	We may now apply the theorem inductively to these cylinders, which are subsets of $\rReal^n$.
	Let $\{D_i\}$ be the resulting collection of admissible $\LE$-cells in $\rReal^n$.
	For each $D_i$, let $\tilde D_i\subset\rReal^{n+1}$ be the admissible $\LE$-cell given by 
	\begin{equation}
		\{(x_1,\dots ,x_{n-1},g(x_1,\dots,x_{n-1}),x_n)\st (x_1,\dots,x_n)\in D_i\}.
	\end{equation}
	The collection of $\LE$-cells $\mathcal{D}_2=\{\tilde D_i\}$, together with the collection $\mathcal{D}_1$, form the desired decomposition of $\rReal^{n+1}$ into admissible $\LE$-cells.
\end{proof}

\begin{remark}
	Using \Cref{thm: sharp cd}, we may assume that all applications of the $\LE$-preparation and $\LA$-preparation theorems (\Cref{thm: exp split,thm: LS preparation}), and hence also of \Cref{thm: LE cylinder decomposition}, yield decompositions into admissible $\LE$-cells rather than just $\LE$-cylinders.
	We note that the inductive proofs of these theorems rely on the use of cylinders, and so we only make this assumption starting from \Cref{sec: complex cells}.
\end{remark}

\subsection{Effective bounds on the lengths of existential formulas}
\label{sec: effective model completeness}
\Cref{thm: LE cylinder decomposition} yields, in particular, an effective theorem of the complement (i.e.\ effective model completeness) for existentially defined sets in $\structure(\exp)$ with respect to the language $\langle\rReal,\exp,\mathcal{F}\rangle$, where $\mathcal{F}$ is the collection of all functions definable in~$\structure$.

In the case of $\structure=\Rre$, we have \Cref{thm: effective model completeness} below, which is closer in spirit to Wilkie's model completeness results in \cite{Wilkie1996} (see \cite{BerarducciServi2004} for an effectivization of Wilkie's theorem of the complement of \cite{Wilkie1999}).

We work in the langauge $\mathcal{L}=\langle \rReal,+,\cdot,\exp,\restrict{\sin}{[0,1]} \rangle$.
We note that our use of~$\restrict{\sin}{[0,1]}$ is necessary due to our complex analytic methods.

\begin{definition}
	\label{def: quantifier free}
	A set $X\subset\rReal^n$ is a \emph{basic quantifier-free set in $\mathcal{L}$} if it is of the form
	\begin{equation}
		\label{eq: qf sign set}
		\{f_1=\cdots=f_r=0,g_1>0,\dots,g_s>0\}
	\end{equation}
	for some $r,s\in\nNatural$, where each function $f_i$ or $g_i$ is of the form 
	\begin{equation}
		\label{eq: exponential trig polynomial}
		P_{i}(x_1,\dots,x_n,\exp(x_1),\dots,\exp(x_n),\restrict{\sin}{[0,1]}(x_1),\dots,\restrict{\sin}{[0,1]}(x_n))
	\end{equation}
	for a polynomial $P_{i}\in\rReal[t_1,\dots,t_{3n}]$.
	
	A set $X\subset\rReal^n$ is a \emph{quantifier-free set in $\mathcal{L}$} if it is a finite union of basic quantifier-free sets.
	The \emph{degree} of $X$ is the sum of the degrees of all polynomials $P_i$ used to define the functions $f_i,g_i$ determining $X$.
\end{definition}
Note that we only restrict the domain of $\sin$ and not of $\exp$ in \eqref{eq: exponential trig polynomial}.

\begin{theorem}
	\label{thm: effective model completeness}
	Let $\widehat{X}\subset\rReal^{n+m}$ be a quantifier-free set in $\mathcal{L}$ of degree $D$ and let $X\subset\rReal^n$ be the projection of $\widehat{X}$ to $\rReal^n$.
	Then there exist $N=\poly_{n,m}(D)$ and a quantifier-free set $\widehat{Y}\subset \rReal^{N}$ of degree $\poly_{n,m}(D)$ such that its projection $Y\subset\rReal^n$ satisfies $\rReal^n\setminus X = Y$.
\end{theorem}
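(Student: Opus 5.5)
Our plan is to convert $\widehat X$ into $\LE$-data, apply \Cref{thm: LE cylinder decomposition} to describe the complement of $X$ by a controlled number of $\LE$-cylinders, and then translate those cylinders back into existential formulas of $\mathcal{L}$ by introducing a new variable for every node of their parse trees.

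\emph{Step 1: from $\widehat X$ to $\LE$-functions.} Each function in \eqref{eq: exponential trig polynomial} is an $\LE$-function on $\rReal^{n+m}$. Its parse tree has $\order[n,m]{1}$ nodes: one $\structure$-node carrying the polynomial $P_i$, $n+m$ $\exp$-nodes, and $n+m$ $\structure$-nodes for $\restrict{\sin}{[0,1]}$. Its tree-degree is $\poly_{n,m}(\deg P_i)$, using the degree of $P_i$ in $\Rre$. If $\widehat X$ has $k\leq D$ functions in total, then the indicator of $\widehat X$ is a Boolean combination of sign conditions on these $k$ functions.

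\emph{Step 2: decompose.} We apply \Cref{thm: LE cylinder decomposition} iteratively, projecting one coordinate at a time from $\rReal^{n+m}$ down to $\rReal^n$. Over $\rReal^n$ this yields a decomposition into $\poly_{n,m}(D)$ $\LE$-cylinders with tree-format $\order[n,m]{1}$ and tree-degree $\poly_{n,m}(D)$. Exactly as in the proof of \Cref{thm: s exp sharp}, each cylinder is either contained in $X$ or disjoint from it. Hence $\rReal^n\setminus X$ is a union of $\poly_{n,m}(D)$ such cylinders. By \Cref{lem: complexity to indicator}, each cylinder is cut out by sign conditions on $\poly_{n,m}(D)$ $\LE$-functions of tree-format $\order[n,m]{1}$ and tree-degree $\poly_{n,m}(D)$.

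\emph{Step 3: back to $\mathcal{L}$.} Each $\LE$-function $h$ appearing above is made existential by introducing one new variable $y_v$ for each node $v$ of its parse tree. We impose:
\begin{itemize}
\item at an $\exp$-node, $y_v=\exp(y_w)$, which is allowed since $\exp$ is unrestricted;
\item at a $\log$-node, $\exp(y_v)=y_w$ together with $y_w>0$, or $y_v=0$ together with $y_w\leq 0$, following our convention for $\log$.
\end{itemize}
The sign conditions defining the cylinder then become conditions on the root variables. Since there are $\order[n,m]{1}$ nodes per tree and $\poly_{n,m}(D)$ trees, the total number of auxiliary variables is $N=\poly_{n,m}(D)$. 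A finite union of such existential sets is again the projection of a quantifier-free set, by introducing disjoint blocks of variables or a disjunction.

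\emph{Main obstacle: the $\structure$-nodes.} An $\structure$-node is an arbitrary $\Rre$-definable function of format $\order[n,m]{1}$ and degree $\poly(D)$. It must be expressed existentially by exponential-trigonometric polynomials of degree $\poly(D)$ in $\poly(D)$ variables, which is an effective model completeness statement for $\Rre$ itself. I would try to obtain this from the definition of the FD-filtration on $\Rre$: it is generated by restricted Pfaffian (here restricted $\exp$ and $\sin$) functions, and the sharp cell decomposition constructs everything from graphs via projections, Boolean operations and polynomial operations with polynomially controlled complexity. In particular, I would track this construction to show that every set of format $F$ and degree $D$ in $\Rre$ is the projection of a quantifier-free set in $\{x,\exp|_{[0,1]},\sin|_{[0,1]}\}$ of size and degree $\poly_F(D)$.

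Complements of such projections in $\Rre$ are handled by cell decomposition in $\Rre$. Alternatively, I would invoke Gabrielov's effective complement theorem for restricted Pfaffian functions. Restricted $\exp$ is rewritten with unrestricted $\exp$ via the domain conditions $0\leq x\leq 1$. Restricted $\sin$ is encoded by $\restrict{\sin}{[0,1]}$ together with bounded arguments obtained through the auxiliary variables, so that $\restrict{\sin}{[0,1]}$ of a variable suffices. This step, namely the existential encoding of $\Rre$-functions with polynomial degree, is where I expect most of the work to lie.
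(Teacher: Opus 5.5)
Your overall architecture matches the paper's. You use \Cref{thm: LE cylinder decomposition} to get $\poly_{n,m}(D)$ $\LE$-cylinders of tree-complexity $(O_{n,m}(1),\poly_{n,m}(D))$ compatible with the projection, and then unwind parse trees with one auxiliary variable per node. Projecting down to $\rReal^n$ via the argument of \Cref{thm: s exp sharp} is a harmless variant of the paper's choice, which keeps the cylinders of $\rReal^{n+m}$ whose projection misses $X$. Your $\exp$/$\log$ encodings are fine, since each cylinder involves only $O(1)$ $\log$-nodes. The genuine gap is the step you flag yourself: encoding the $\structure$-nodes existentially with polynomial degree. The route you sketch for it does not work as stated.

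The functions $h$ at $\structure$-nodes are only known to lie in $\structure_{O(1),\poly(D)}$ for the filtration the paper puts on $\Rre$. That is the analytically generated filtration of \cite[Definition 2.1]{CarmonAnalyticallyGenerated}, which is defined through definable complex cells and holomorphic maps, not by closing Pfaffian formulas under syntactic operations. So there is no construction to ``track'' that outputs formulas. Handling complements by cell decomposition in $\Rre$ is circular, since that again returns only cells of bounded format and degree with no formula attached. Gabrielov's effective complement theorem cannot be invoked until you know each $h$ has a restricted sub-Pfaffian presentation of format $O(1)$ and degree $\poly(D)$.

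The missing ingredient is exactly this comparison of filtrations: \cite[Proposition 2.5]{CarmonAnalyticallyGenerated}. It states that the analytically generated filtration is reducible to the $*$-filtration of \cite{BinyaminiVorobjov2022}, so each $h$ has $*$-format $O(1)$ and $*$-degree $\poly(D)$. The paper then applies \cite[Theorem 1]{BinyaminiVorobjov2022} to the $h$'s to get cells of $*$-format $O(1)$ and $*$-degree $\poly(D)$. It checks, by inspecting that proof, that these cells are sub-Pfaffian with respect to the same chain, so only $\exp$ and $\restrict{\sin}{[0,1]}$ occur and the result stays in $\mathcal{L}$. Finally it uses the cellular decomposition of \cite{GabrielovVorobjov2001} to present these cells as projections of quantifier-free sets in $\rReal^N$, with $N=\poly(D)$ and degree $\poly(D)$. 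With that input, your Step 3 goes through.
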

\begin{proof}
	The functions $f_i,g_i$ determining $\widehat{X}$ in \eqref{eq: qf sign set} are $\LE$-functions of complexity $(\order[n,m]{1},\polyfd[n,m][D])$.
	Applying \Cref{thm: LE cylinder decomposition} to the functions $f_i,g_i$ yields a decomposition of $\rReal^n$ into $\polyfd[n,m][D]$-many $\LE$-cylinders of complexity $(\order[n,m]{1},\polyfd[n,m][D])$.

	Let $M=\polyfd[n,m][D]$ and let $h_1,\dots,h_M\in\Rre$ be the functions corresponding to all $\structure$-nodes in the parse trees of the $\LE$-functions determining the resulting cylinders.
	Each $h_i$ is in $\FD*[n,m][D]$, where we consider the structure $\structure=\Rre$ with the FD-filtration defined in~\cite[Definition 2.1]{CarmonAnalyticallyGenerated}.
	By~\cite[Proposition 2.5]{CarmonAnalyticallyGenerated}, this filtration is reducible to the \emph{$*$-filtration} given in~\cite[Definition 3]{BinyaminiVorobjov2022}.
	Thus each $h_i$ has \emph{$*$-format} $\order[n,m]{1}$ and \emph{$*$-degree} $\polyfd[n,m][D]$.
	
	We may consider the functions $h_i$ to be defined on a common space $\rReal^{\order[n,m]{1}}$.
	Applying \cite[Theorem 1]{BinyaminiVorobjov2022} to the functions $h_i$, we obtain a cell decomposition\footnote{While \cite[Theorem 1]{BinyaminiVorobjov2022} is formulated in terms of sub-Pfaffian cells, an inspection of the proof shows that the resulting cells are projections of quantifier-free sets in $\mathcal{L}$.
	That is, they are sub-Pfaffian with respect to the same Pfaffian chain as the functions $h_i$.}
	of $\rReal^{\order[n,m]{1}}$ by cells of $*$-format $\order[n,m]{1}$ and $*$-degree $\polyfd[n,m][D]$.
	By the cellular decomposition result of Gabrielov and Vorobjov in \cite{GabrielovVorobjov2001}, these cells are given by projections of quantifier-free sets in $\rReal^N$ of degree $\polyfd[n,m][D]$, for $N={\polyfd[n,m][D]}$.
	
	Combining this existential description of the functions $h_i$ and the parse trees of the $\LE$-functions determining the $\LE$-cylinders covering $\rReal^{n+m}$ yields a cylindrical decomposition of $\rReal^{n+m}$ by projections of quantifier-free sets in~$\rReal^N$ which are compatible with $\widehat{X}$.
	Taking $\widehat{Y}$ to be the union of those cylinders whose projection to $\rReal^n$ does not intersect $X$ finishes the proof.
\end{proof}

\section{Preparation theorems}
\label{sec: prepartion theorems}
In this section, we prove \Cref{thm: LE cylinder decomposition}.
Ignoring questions of effectivity, this section mostly follows~\cite{vdDriesSpeissegger2002}.
We have simplified some parts of the arguments of loc.\ cit., while other parts require more care.

We introduce here the following terminology, which will be convenient throughout the rest of the paper.
\begin{definition}
	\label{def: unit}
	Let $S$ be a set and let $\mu>0$.
	A \emph{unit} on $S$ is a positive function $u:S\to\rReal_{>0}$, which is bounded away from $0$ and from infinity.
	
	A \emph{$\mu$-unit} on $S$ is a unit $u$ such that its logarithmic width (i.e.\ the diameter of $\log(u(S))\subset\rReal$) is at most $\mu$. 
\end{definition}

We fix $n$ for the rest of this section and denote from now on the coordinates of~$\rReal^{n+1}$ by $x_1,\dots,x_n,y$.
We will treat separately the following kind of $\LE$-functions, called ``purely logarithmic in the last variable'' in~\cite{vdDriesSpeissegger2002}
(see \Cref{def: parse tree} for the notions of $\exp$-nodes, $\log$-nodes, and $y$-leaves). 
\begin{definition}
	\label{def: LA function}
	A function $f\in\LE_{n+1}$ is an \emph{$\LA$-function} if there exists a parse tree for $f$ with no $\exp$-nodes on any path connecting the root to a $y$-leaf.
	If, in addition, all paths connecting the root to a $y$-leaf in this tree contain at most $\ell$ $\log$-nodes, we write~$f\in\LA_{n+1,\ell}$ and call $\ell$ the \emph{logarithmic depth} of $y$ in~$f$, or simply the logarithmic depth of $f$, for short.
\end{definition}
When considering the parse tree of a function in $\LA_{n+1,\ell}$, we will always assume that it is of the form in the definition above.

\Cref{thm: LE cylinder decomposition} reduces to the following two claims.

\begin{proposition}
	\label{prop: LE to LS}
	Let $f_1,\dots,f_k\in\LE_{n+1}$.
	Then there is a finite decomposition of $\rReal^{n+1}$ into $\LE$-cylinders $\{C_j\}$ along with corresponding $\LA$-functions $g_{i,j}:\rReal^{n+1}\to\rReal$ such that $\sign f_i=\sign g_{i,j}$ on $C_j$.

	If the functions $f_1,\dots,f_k$ are of complexity $(\format,\degree)$, we may take the number of cylinders in this decomposition to be $\polyfd[\format,n][\degree,k]$ and the complexities of each $C_j$ and $g_{i,j}$ to be $(\order[\format,n]{1},\polyfd[\format,n][\degree])$.
\end{proposition}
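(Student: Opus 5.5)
We argue by induction on the \emph{exponential complexity} of the $f_i$ with respect to $y$: the number of $\exp$-nodes lying on some path from the root to a $y$-leaf in a (maximally contracted) parse tree. If this number is zero for every $f_i$, then each $f_i$ is already an $\LA$-function and we take the single cylinder $\rReal^{n+1}$. Otherwise we pick, across all $f_i$, a \emph{lowest} such $\exp$-node, namely one whose argument $h$ contains $y$ but no $\exp$-node above a $y$-leaf. Then $h$ is an $\LA$-function of complexity at most $(\format,\degree)$. Since there are at most $\format$ nodes per tree, the total number of such $\exp$-nodes is $\order[\format]{k}$. However, the recursion must be organised so that the blow-up in the number of cylinders stays polynomial in $k$. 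So we process all the lowest $\exp$-nodes of all $f_i$ simultaneously, and the depth of the recursion is then bounded by $\format$.

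The key step handles a single lowest node $\exp(h)$ with $h\in\LA_{n+1,\ell}$. We invoke the $\LA$-preparation theorem (\Cref{thm: LS preparation}) for the finite collection of all such $h$. This decomposes $\rReal^{n+1}$ into $\LE$-cylinders, $\polyfd[\format,n][\degree,k]$ of them and of complexity $(\order[\format,n]{1},\polyfd[\format,n][\degree])$. On each cylinder, $h$ is written as $a(x)\,\prod_j \ell_j(x,y)^{r_j}\,u(x,y)$, where the $\ell_j$ are nested logarithms of $y-\theta(x)$ and $u$ is a unit. On each such cylinder we then split according to the size of $h$. Where $h$ is bounded, $\exp\circ h$ restricted there equals $\tilde E(h)$, with $\tilde E$ the restricted exponential composed with a bounded $\structure$-function, and this is in $\structure$ because $\structure\ni\restrict{\exp}{[0,1]}$. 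So the node becomes an $\structure$-node and no longer counts. Where $|h|\to\infty$ we follow vdDries--Speissegger. We write $h=h_\infty+h_0$, where $h_\infty$ is the unbounded part of the prepared monomial expansion and $h_0$ is bounded. This gives $\exp(h)=\exp(h_\infty)\exp(h_0)$, and since $\exp(h_\infty)$ depends on $y$ only through the monomial in logarithmic scale, we introduce it as a new variable-free factor. Its sign information is trivial: $\exp>0$. When it sits inside further functions, we pass to the exponential splitting (\Cref{thm: exp split}) to compare $\exp(h_\infty)$ with the remaining terms and replace the composite by either the dominant term times a unit or an $\structure$-function of a bounded quotient. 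This reduces the exponential depth of $y$ in each $f_i$ by one on every cylinder, at the cost of replacing $f_i$ by a function $\tilde f_i$ with the same sign on that cylinder and of complexity $(\order[\format]{1},\polyfd)$.

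Iterating at most $\format$ times yields $\LA$-functions $g_{i,j}$. The number of cylinders multiplies by $\polyfd[\format,n][\degree,k]$ at each of the $\order[\format]{1}$ stages, and after each stage we refine via \Cref{rem: refine two covers}. The result stays polynomial because the number of stages depends only on $\format$, and because the tree-format grows by $\order[\format]{1}$ factors and the tree-degree by polynomials at each stage. I expect the main obstacle to be the unbounded case. There one must show that the exponential splitting can be carried out uniformly for all $k$ functions, with the number of pieces polynomial in $k$ rather than exponential. The plan is to apply \Cref{thm: exp split} once to the whole family of exponents $\{h\}$, rather than function by function, and to bound the resulting cylinders by refinements of a common cover.
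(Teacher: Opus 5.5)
\textbf{There is a genuine gap in the unbounded case, and the step you use to close it is circular.}

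\emph{The unbounded case.} \Cref{thm: LS preparation} gives $h$ in multiplicative form: a log-scale monomial $M$, times $a(x)$, times a unit $u$ of small logarithmic width. It does not give an additive splitting $h=h_\infty+h_0$ with $h_0$ bounded. Subtracting $M\cdot c$ leaves $M(u-c)$, which is in general unbounded. For example, for $h=y\exp(1/\log y)$ on $y\gg 1$ one has $h-y\sim y/\log y$, and the expansion $y\sum_{k\geq 0}(\log y)^{-k}/k!$ has infinitely many unbounded terms.

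More seriously, even if such a splitting existed, $\exp(h_\infty)$ still depends on $y$. So no $\exp$-node has been removed from any path to a $y$-leaf, and your claim that the exponential depth drops by one is unjustified. The sign of $f_i$ is governed by how $\exp(h)$ interacts with the functions applied to it, and preparing the argument $h$ says nothing about that. For $f=G(x,y,\exp(y))$ with $G\in\structure$, the argument $y$ is already prepared and all the work lies in $G$. Processing lowest nodes first also breaks the induction: a lowest $\exp$-node can have a sibling of higher exponential level under the same $\structure$-node, as in $G(\exp(y),\exp(\exp(y)))$. Preparing $G$ in that variable would then require handling more complex functions.

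\emph{The circular step.} You hand the actual elimination of $\exp$-nodes to \Cref{thm: exp split}. But that theorem already proves the proposition in one line, and this is exactly the paper's argument. Take $\mu=1$. On each of the $\poly_{F,n}(D,k)$ cylinders one has $f_i=g_i\cdot\exp(h_i)\cdot u_i$ with $g_i\in\LA_{n+1}$ and $\exp(h_i)u_i>0$, so $\sign f_i=\sign g_i$, with the stated complexities. So either your bottom-up machinery is superfluous, or, if you mean to supply the content of \Cref{thm: exp split}, your decisive step is circular.

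\emph{The missing idea} is the paper's top-down induction on syntactic complexity $(e,\#\mathcal{E},\ell,\#\mathcal{LE})$:
\begin{enumerate}
\item Take a maximal subtree $\exp(s)$ of maximal logarithmic depth, and write its $\structure$-parent as $G(r,\exp(s'),\exp(s))$.
\item $\structure$-prepare $G$ with $w=\exp(s)$ as the last variable.
\item Pull the cells back by applying \Cref{thm: LE cylinder decomposition} inductively to the lower-complexity functions $\chi_{D'}\comp T'$, $\alpha\comp T'$ and $s-\log(\alpha\comp T')$.
\item Use the center dichotomy. A nonzero center $\theta$ gives $\exp(s)=\exp(s-\log\theta)\cdot\theta$ with a restricted exponential, which lowers the complexity. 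A zero center gives $G=\exp(\lambda s)\cdot a\cdot u$. This either factors out at the root, after which you recurse on $a$, or cancels against the $\log$ directly above it via $\log g=\lambda s+\log a+\log u$.
\end{enumerate}
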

\begin{proposition}
	\label{prop: LS constant sign}
	Let $f_1,\dots,f_k\in\LA_{n+1}$.
	Then there is a finite decomposition of $\rReal^{n+1}$ into $\LE$-cylinders, on each of which the functions $f_i$ have constant sign.

	If the functions $f_1,\dots,f_k$ have complexity $(F,D)$, we may take the number of cylinders in this decomposition to be $\polyfd[\format,n][\degree,k]$ and the resulting cylinders to be of complexity $(\order[F,n]{1},\polyfd[\format,n][\degree])$.
\end{proposition}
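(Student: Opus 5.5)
We prove \Cref{prop: LS constant sign} by induction on the logarithmic depth $\ell$ of $y$, using an effective $\LA$-preparation in the spirit of Lion--Rolin and van den Dries--Speissegger. The key input is the effective preparation theorem for $\structure$ (\Cref{thm: S-preparation}). We plan to establish the following stronger claim (the $\LA$-preparation, \Cref{thm: LS preparation}). For $f_1,\dots,f_k\in\LA_{n+1,\ell}$ there is a decomposition of $\rReal^{n+1}$ into $\polyfd[\format,n][\degree,k]$ $\LE$-cylinders $C$, each carrying a logarithmic scale
\begin{equation}
y_0=y-\theta_0(x),\quad y_{j}=\log\abs{y_{j-1}}-\theta_j(x),\quad j\le \ell,
\end{equation}
with $\LE_n$-functions $\theta_j$ of bounded complexity and $y_j$ of constant sign on $C$. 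On such a cylinder each $f_i$ takes the form $a_i(x)\,\abs{y_0}^{r_0}\cdots\abs{y_\ell}^{r_\ell}\,u_i(x,y)$, with $u_i$ a unit and $a_i\in\LE_n$. Given this, $\sign f_i=\sign a_i$ on $C$, since the $y_j$ have constant sign. We then apply the $n$-variable version of \Cref{thm: LE cylinder decomposition} (which we treat by an outer induction on $n$) to the bases of the cylinders together with the functions $a_i$ and the $\theta_j$, and refine the cylinders as in \Cref{rem: refine two covers}. Throughout we must keep the number of pieces polynomial in $k$ and $\degree$; the tree-format remains $\order[\format,n]{1}$ because the depth of the nesting is bounded by $\format$.

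For the inductive step on $\ell$, we handle the parse tree of an $\LA$ function from the leaves upward. Each node lying on a path from the root to a $y$-leaf is either an $\structure$-node or a $\log$-node. At a $\log$-node whose argument has already been prepared as $a\prod\abs{y_j}^{r_j}u$, we get
\begin{equation}
\log$ of that argument $=\log\abs a+\sum r_j\log\abs{y_j}+\log u .
\end{equation}
The term $\log\abs{y_\ell}$ introduces the next scale element $y_{\ell+1}$ once we subtract $\theta_{\ell+1}(x)$. At an $\structure$-node $G(h_1,\dots,h_m)$, with the $h_i$ prepared on a common scale, we treat the finitely many quantities $y_j$ (or their logs), together with the normalized unit arguments, as new variables. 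We then apply \Cref{thm: S-preparation} in the last variable to $G$ composed with the power map. This requires first splitting into regions where each prepared $h_i$ is either comparable to a function of $x$ alone, so that it can be recentred as $\theta+$ small, or dominated by one scale monomial.

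We expect the main obstacle to be the bookkeeping at the $\structure$-nodes: re-centring (the ``translation'' step that produces a new $\theta_0$ and a new scale) must be done simultaneously for all $k$ functions. The number of cylinders must multiply only by a factor polynomial in $k$, not exponentially in the tree depth times $k$. To achieve this, we plan to collect at each level all centres $\theta$ proposed by all functions and all subterms, sort them by a single application of the inductive cylinder decomposition in $n$ variables (\Cref{rem: refine two covers}), and then prepare all functions relative to the nearest centre. Because the number of levels is $\order[\format]{1}$, this keeps both the count and the complexity polynomial.
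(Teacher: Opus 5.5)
Your outer reduction matches the paper's: prepare on cylinders carrying a log-scale so that $\sign f_i=\sign a_i$, then apply \Cref{thm: LE cylinder decomposition} in $n$ variables and take a common refinement. Your center-sorting device for keeping the count polynomial in $k$ is also essentially the paper's $\ell=0$ argument with the ratios $\rho_{i,j}$.

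The gap is the $\structure$-node step for $\ell>0$. This is the heart of the proof, and you dispose of it in one sentence. When you apply \Cref{thm: S-preparation} to $F(\varphi(x),y_\ell,\dots,y_0)$ in one of the scale variables, the center $\theta$, the coefficient $a$, and the walls $\alpha$ of the target cell are all functions of the \emph{other} scale variables, which still depend on $y$. Concretely:
(i) To know which cell $T(C)$ lies in, you need constant sign of a function like $y_0-\alpha(\varphi(x),y_\ell,\dots,y_1)$. This is again an $\LA$-function of depth $\ell$, so your induction on $\ell$ does not apply to it.
(ii) A non-zero center yields a factor $\abs{y_0-\theta(\varphi(x),y_\ell,\dots,y_1)}$. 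This cannot be a log-scale element, since log-scale centers must depend on $x$ alone.
(iii) The coefficient $a(\varphi(x),y_\ell,\dots,y_1)$ is not yet prepared, and you give no way to prepare it.

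Your preliminary splitting into regions where each $h_i$ is ``comparable to a function of $x$ or dominated by a scale monomial'' addresses none of these. You also never say which variable is prepared last, and everything hinges on that choice.

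The paper's missing ideas are as follows.
\begin{enumerate}
\item Prepare with $y_0$ as the last variable. Regard $y_1,\dots,y_\ell$, and hence $a_i$, $\theta_i$ and $\chi_{D'}$ composed with the scale, as functions of the coordinates $(x,y_1)$. In these coordinates they have logarithmic depth $\ell-1$ (\Cref{lem: kappa}), which is what lets the induction on $\ell$ close.
\item If all centers vanish, inductively preparing the $a_i$ and $y_1,\dots,y_\ell$ in $(x,y_1)$ gives the common scale $y_0,\overline y_1,\dots,\overline y_\ell$.
\item If some center is non-zero, split according to whether $\abs{y_j}>M$ or $y_j\in[m,m+1]$.
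In the second case \Cref{lem: LS complexity drop} lowers the depth.
In the first case, the center condition $y_0\approx\theta_i$, together with an inductive preparation of $\theta_i$ in $(x,y_1)$, gives $y_0\approx\abs{y_1}^{\alpha_1}\cdots\abs{y_\ell}^{\alpha_\ell}A(x)$. Taking logarithms shows $y_1/(\log\abs{A}-\theta_1)$ is close to $1$, so the depth drops again. Thus a non-zero center never has to enter the scale.
\item The sign of $y_0-\alpha(\cdots)$ is settled by a continuity argument along fibers. Near its zero set one has $\tfrac12<y_0/\alpha<2$. There the same depth-drop mechanism replaces it by a function of depth $\ell-1$ whose sign is fixed inductively. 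This forces constant sign on each fiber, after which one splits the base.
\end{enumerate}
Without this mechanism, or some substitute for it, the induction on $\ell$ in your plan does not close.
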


\Cref{prop: LE to LS} follows from the more explicit statement given in \Cref{thm: exp split} below (see \Cref{def: syntactic complexity} for the notion of exponential level).
\begin{definition}[$\LE$-prepared]
	\label{def: LE prepared}
	Let $C\subset\rReal^{n+1}$ be an $\LE$-cylinder and let $\mu>0$.
	A function $f:C\to\rReal$ in~$\LE_{n+1}$ is \emph{$(\LE,\mu)$-prepared} if there exists a function $g\in\LA_{n+1}$, a function $h$ in $\LE_{n+1}$ of lower exponential level than $f$, and a $\mu$-unit $u\in\LE_{n+1}$ such that
	\begin{equation}
		\label{eq: LA times exp}
		f=g\cdot\exp(h)\cdot u
	\end{equation}
	on $C$.
	When it is clear from context that $f$ is an $\LE$-function, we will simply say that it is \emph{$\mu$-prepared}.
\end{definition}
\begin{theorem}[$\LE$-preparation]
	\label{thm: exp split}
	Let $f_1,\dots,f_k\in\LE_{n+1}$ be of complexity $(\format,\degree)$ and let $\mu>0$.
	Then there is a decomposition of $\rReal^{n+1}$ into $\polyfd[\format,n][\degree,k,1/\mu]$-many $\LE$-cylinders, on each of which each $f_i$ is $\mu$-prepared.
	
	We may take the complexities of the cylinders in this decomposition and of the functions $g_{i},h_{i},u_{i}$ in \eqref{eq: LA times exp} to be $(\order[F,n]{1},\polyfd[\format,n][\degree])$.
\end{theorem}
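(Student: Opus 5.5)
The proof will go by induction on the exponential level of the $f_i$, following the structure of~\cite{vdDriesSpeissegger2002} but keeping track of counts and complexities at every stage. An exponential level $0$ function (no $\exp$-node above a $y$-leaf) is already an $\LA$-function, so we may take $g=f$, $h=0$, $u=1$. For the inductive step we look at the parse tree of $f_i$ and at its maximal $\exp$-nodes $\exp(a)$ above $y$-leaves. Each argument $a$ has lower exponential level, so we may use the induction hypothesis together with \Cref{prop: LS constant sign}. Concretely, we partition $\rReal^{n+1}$ into $\polyfd[\format,n][\degree,k,1/\mu]$-many $\LE$-cylinders on which every such $a$ either stays bounded, with $a\le c$ for a constant $c=\order[\format,n]{1}$, or tends to $+\infty$. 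In the bounded case, $\exp(a)$ is a composition of restricted $\exp$ (which lies in $\structure$) with $a$, so it can be absorbed into an $\structure$-node. In the unbounded case we keep $\exp(a)$ as a new auxiliary variable and prepare it.

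The key steps are as follows.

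First, by the induction hypothesis, write $a=g_a\exp(h_a)u_a$ on each cylinder, with a $\mu$-unit $u_a$. Using \Cref{prop: LS constant sign} for the $\LA$-part and subdividing further, split each cylinder into a part where $\exp(a)$ is comparable to $\exp$ of an expression $A$ of lower level that does not depend on $y$ through the unit, and a remainder. On the remainder we write $a=A+\epsilon$ with $\abs{\epsilon}$ small, so that $\exp(a)=\exp(A)\exp(\epsilon)$ and $\exp(\epsilon)$ is a unit factor.

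Second, substitute into the $\structure$-node $F$ above. For the outer $\structure$-function $F(\dots,\exp(a_j),\dots)$, the factors $\exp(a_j)$ that are large are replaced by $1/\exp(-a_j)\to 0$. We then apply the effective $\structure$-preparation of $F$ in these small variables (\Cref{thm: S-preparation}, used only as a black box with polynomial bounds in degrees). This expresses $F$ on each piece as a monomial in $\exp(-a_j)$ and the other inputs, times a unit. The monomial has the form $\exp(\text{linear combination of } a_j)$ times an $\LA$-part, and the linear combination has lower level, so it becomes $h$.

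Third, log-width control. The unit produced by $\structure$-preparation is analytic in small variables. Subdividing the range of those variables into $\order{1/\mu}$ pieces (via a further application of \Cref{prop: LS constant sign} to the comparisons $\exp(-a_j)\le \mu$ etc.) makes it a $\mu$-unit. This is where the polynomial dependence on $1/\mu$ enters.

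All refinements are combined using \Cref{rem: refine two covers}. Since each step applies results with polynomial bounds to $\polyfd[\format,n][\degree,k]$ functions, and the recursion depth is bounded by the exponential level, which is at most $\format$, the counts remain $\polyfd[\format,n][\degree,k,1/\mu]$. The tree-formats grow only by $\order[\format,n]{1}$.

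The main obstacle is keeping the dependence on $k$ polynomial. We must not prepare each $f_i$ separately with a fresh cover and then intersect recursively, since exponential nesting would compound. I would handle this by collecting, at each level, all $\exp$-arguments of all the $f_i$ into one list and preparing them simultaneously with a single application of the inductive hypothesis. Each level then costs a fixed polynomial factor in the total number of functions, and there are only $\order[\format]{1}$ levels, so the overall bound stays polynomial.
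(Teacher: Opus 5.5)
There is a genuine gap, in your second step. \Cref{thm: S-preparation} does not monomialize $F$ in all the exponential inputs at once. It prepares with respect to one variable $w$ (say $w=\exp(s)$, or your $\exp(-a)$) and returns $\abs{w-\theta}^{\lambda}\cdot a\cdot u$. Here the center $\theta$ and the coefficient $a$ are functions of the remaining inputs, and these include the other exponentials $\exp(s')$ of the same level. You implicitly assume $\theta\equiv 0$, but nonzero centers cannot be avoided. Take $F(w_1,w_2)=w_1-w_2$ along $w_1=\exp(y)$, $w_2=\exp(y+1/y)$. Along this curve $F$ is not a monomial in $w_1,w_2$ (equivalently in $w_1^{-1},w_2^{-1}$) times a unit, so the preparation in $w_2$ near the diagonal must use a nonzero center such as $\theta=w_1$.

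Handling exactly this case is the key move of the paper. The center condition \eqref{eq: center condition} gives $2/3<\exp(s)/\theta<2$. One then rewrites $\exp(s)=\theta\cdot\exp(s-\log\theta)$ with the exponential restricted to a bounded interval, i.e.\ as an $\structure$-node. This removes the maximal subtree $\exp(s)$. However, $\theta$ still contains $\exp(s')$ (as does the coefficient $a$ in the zero-center case), so the exponential level does not drop, and an induction on exponential level alone cannot close.

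This is why the paper inducts lexicographically on the syntactic complexity $(e,\#\mathcal{E},\ell,\#\mathcal{LE})$ of \Cref{def: syntactic complexity}. In the nonzero-center case $\#\mathcal{E}$ drops. In the zero-center case one gets $\exp(\lambda s)\cdot a(r,\exp(s'))\cdot u$, puts $\lambda s$ into $h$, and inducts on $a(r,\exp(s'))$. The same finer induction is needed for a step you omit entirely: pulling the cells of the $\structure$-decomposition back to $\LE$-cylinders. Their walls impose signs of $\exp(s)-\alpha(r,\exp(s'))$. The paper converts these to signs of $s-\log\alpha(r,\exp(s'))$, which has the same level but fewer maximal subtrees.

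Two further problems. First, you only treat exponentials that are direct inputs of the outer $\structure$-node. Maximal $\exp$-nodes may sit under an $\structure$-node that is the argument of a $\log$-node, e.g.\ $F(\log G(\exp(y)))$. The paper handles these through the components $\ell,\#\mathcal{LE}$: in the zero-center case it uses $\log g=\lambda s+\log a+\log u$ with $\log u\in\structure$, which lowers the logarithmic depth of that occurrence.

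Second, your first step does not deliver what you need. Preparing the exponent as $a=g_a\exp(h_a)u_a$ with a $\mu$-unit fixes $a$ only up to a factor $e^{\mu}$. This gives no additive control, so neither $\exp(a)\asymp\exp(A)$ nor a splitting $a=A+\epsilon$ with $\epsilon$ small follows. Moreover, $a\le c$ does not make $\exp(a)$ a restricted exponential, since $\exp$ on $(-\infty,c]$ is not definable in the polynomially bounded $\structure$.

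The paper needs no preparation of the exponents at all: the dichotomy comes from whether the center vanishes, not from the size of $s$. The $\mu$-unit also comes directly from \Cref{thm: S-preparation} applied with $\mu/2$, so your third step is unnecessary.
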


Similarly, \Cref{prop: LS constant sign} follows from a more explicit ``$\LA$-preparation theorem'' (\Cref{thm: LS preparation}) which we defer to \Cref{sec: LS preparation proof} after some additional technical setup.

We first prove \Cref{thm: exp split} in \Cref{sec: LE preparation proof} under the assumption that \Cref{prop: LS constant sign} holds, and then proceed to prove \Cref{prop: LS constant sign} in \Cref{sec: LS preparation proof}.

\begin{remark}
	While \Cref{def: LE prepared} suffices for the proof of \Cref{thm: LE cylinder decomposition}, see \Cref{def: complex cell prepared,thm: single cell} in \Cref{sec: single cell} for our full preparation theorem for $\LE$-functions using complex cells.
\end{remark}

The main tools that we will use in the proofs of \Cref{thm: exp split,thm: LS preparation} are the following definition and corollary of~\cite[Theorem 5.3]{CarmonAnalyticallyGenerated}.

\begin{definition}[$\structure$-prepared]
	\label{def: mu prepared}
	Let $C\subset\rReal^m$ be an o-minimal cell in $\structure$ with base $C'\subset\rReal^{m-1}$.
	Assume that $C$ is compatible with $\{x_m=0\}$.
	Let $\mu>0$.
	A function $f:C\to\rReal$ is \emph{$(\structure$,$\mu)$-prepared} if there exist definable functions $\theta,a:C'\to\rReal$ which are compatible with $0$, a definable $\mu$-unit $u$ on $C$, and an exponent $\lambda\in\qRationals$, satisfying the following two conditions.
	
	First, we have that $f$ is given by
	\begin{equation}
		\label{eq: mu prepared}
		f(\initial{\varX}{1}{m})=\abs{x_m-\theta(\initial{\varX}{1}{m-1})}^{\lambda}\cdot a(\initial{\varX}{1}{m-1})\cdot u(\initial{\varX}{1}{m})
	\end{equation}
	on $C$.

	Second, on those cells where $x_m$ and $\theta$ are both non-zero, we have
	\begin{equation}
		\label{eq: center condition}
		0<\abs{x_m-\theta}<\frac{1}{2}\abs{x_m}.
	\end{equation}

	When $\structure$ is clear from context, we will simply say that $f$ is \emph{$\mu$-prepared}.
	We will also call the function $\theta$ the \emph{center} of the expansions~\eqref{eq: mu prepared}.
\end{definition}
We note that the functions $a,\theta$ in \Cref{def: mu prepared} above depend only on the first $m-1$ coordinates, while $u$ may depend on all $m$ coordinates.

\begin{theorem}[$\structure$-preparation]
	\label{thm: S-preparation}
	Let $f_1,\dots,f_k:\rReal^m\to\rReal$ be in $\FD[\format][\degree]$.
	Let $\mu>0$.
	Then there is a decomposition of $\rReal^m$ into $\polyfd[\format][\degree,k,\mu^{-1}]$ o-minimal cells in $\FD*[\format][\degree]$ on each of which each $f_i$ is $\mu$-prepared with respect to a common center $\theta$.

	We may take the exponents $\lambda_i\in\qRationals$ in \eqref{eq: mu prepared} to be of height at most $\polyfd$, and the functions $\theta,a_i,u_i$ in \eqref{eq: mu prepared} to be in $\FD*[\format][\degree]$.
\end{theorem}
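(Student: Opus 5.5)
This theorem is stated as a corollary of \cite[Theorem 5.3]{CarmonAnalyticallyGenerated}. The plan is to take that result, a complex-cellular preparation theorem for analytically generated sharply o-minimal structures, and translate it into the real form of \Cref{def: mu prepared}.

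\emph{Step 1: complex-cell preparation.} Apply \cite[Theorem 5.3]{CarmonAnalyticallyGenerated} to $f_1,\dots,f_k$. This gives $\polyfd[\format][\degree,k]$ real complex cells $\cell$ whose real parts cover $\rReal^m$, each of size $\FD*[\format][\degree]$, admitting extensions $\ext{\cell}{\delta}$. On each extended cell, every $f_i$ equals a monomial in the last fiber coordinate (centered at the cell's center $\theta$) times a holomorphic function of the base and a nowhere-vanishing holomorphic unit. All exponents are rational of height $\polyfd$. Before applying the theorem, add the coordinate function $x_m$ to the family, so that the cells are compatible with $\{x_m=0\}$.

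\emph{Step 2: make the unit a $\mu$-unit.} On $\rRealPos\cell$, the logarithm of the unit is a bounded holomorphic function on $\ext{\cell}{\delta}$. By the Schwarz--Pick type estimates for cells inside their $\delta$-extensions, its oscillation on $\cell$ is $\order{\rho}$ when the extension parameter corresponds to hyperbolic size $\rho$. So it suffices to refine each cell by a $\poly(1/\mu)$-fold subdivision of its fibers: annuli in logarithmic radius, and discs into a disc plus annuli. This yields a $\mu$-unit with $\poly(1/\mu)$ more cells. This is where the $\mu^{-1}$ dependence enters, and the refined cells stay in $\FD*[\format][\degree]$.

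\emph{Step 3: the center condition \eqref{eq: center condition}.} This step is the main obstacle. Where $|x_m-\theta|\ge\frac12|x_m|$, I would try recentering at $0$. Split each fiber by the definable curves $|x_m-\theta|=\frac12|x_m|$. On pieces where $|x_m-\theta|$ and $|x_m|$ are comparable, rewrite $|x_m-\theta|^{\lambda}=|x_m|^{\lambda}\cdot(|x_m-\theta|/|x_m|)^{\lambda}$ and absorb the second factor into the unit. Doing this requires applying Step 2 again to make the factor a $\mu$-unit: it is a ratio bounded in a fixed range, and it extends holomorphically since $x_m$ has no zeros in the fiber annulus. The pieces then use center $0$, where the condition is vacuous.

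\emph{Step 4: a common center.} Remaining pieces with different centers are handled by first preparing all the $f_i$ simultaneously. This is possible because Theorem 5.3 provides a single cell decomposition, with a single center per cell, compatible with all $f_i$. Finally, bound the number of cells, the heights of the $\lambda_i$, and the sizes of $\theta,a_i,u_i$, all of which are polynomial by construction.
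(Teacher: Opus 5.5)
Your starting point matches the paper's. The paper regards this theorem as a direct corollary of \cite[Theorem 5.3]{CarmonAnalyticallyGenerated} and takes everything from there: the $\mu$-units, the exponent bounds, the common center (the function $\varphi_m$ in the last coordinate $\pm z_m^{q}+\varphi_m$ of the real cellular map, shared by all $f_i$ on a cell), and the center condition \eqref{eq: center condition}. The one adjustment the paper does make is exactly what your proposal omits. Theorem 5.3 produces a \emph{cover} of $\rReal^m$, as you note in Step 1, whereas the statement asks for a decomposition, and none of your Steps 2--4 produces disjoint cells. The fix is short but necessary. Apply sharp cell decomposition of $\structure$ to the $\poly_F(D,k,\mu^{-1})$ covering cells, each of degree $\poly_F(D)$. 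Each resulting cell $E$ lies in some covering cell $C$, and the base of $E$ lies in the base of $C$. Restricting $\theta,a_i,u_i$ therefore preserves \eqref{eq: mu prepared}, \eqref{eq: center condition}, compatibility with $\{x_m=0\}$ and the $\mu$-unit bound, with the required count and complexity. Alternatively, run the proof of Theorem 5.3 with the disjoint version of the real CPT.

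Step 3 is only needed if you re-derive the center condition yourself, and as written it fails. On $\{|x_m-\theta|\geq\frac12|x_m|\}$ the ratio $|x_m-\theta|/|x_m|$ is bounded below but not above. For $\theta>0$ and $0<x_m\leq\frac23\theta$ it tends to $\infty$ as $x_m\to0$. So it is not ``a ratio bounded in a fixed range'' and cannot be absorbed into a unit. You need a third regime, e.g.\ $|x_m|\leq\frac12|\theta|$. There $|x_m-\theta|/|\theta|\in[\frac12,\frac32]$, the factor $|\theta|^{\lambda_i}$ goes into $a_i$, and the exponent of $x_m$ becomes $0$. On the rest of the complement, $|x_m-\theta|/|x_m|\in[\frac12,3)$, as you want.

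Also, ``applying Step 2 again'' is not available. The pieces cut out by these real curves are not images of the complex cells you have, and re-preparing them would produce new centers, which is circular. Two things do work:
\begin{enumerate}
\item A purely real subdivision of each fiber by level sets of the relevant ratio along a geometric progression with step $e^{\mu/\poly(D)}$, using $|\lambda_i|\leq\poly(D)$. This is what the paper does with the ratios $\rho_{i,j}$ in the case $\ell=0$ of $\LA$-preparation.
\item Staying in the cellular picture. The pullback $\pm z_m^{q}+\varphi_m$ of $x_m$ is nonvanishing on the $\delta$-extension, so $|\varphi_m|$ must avoid all values of $|z_m^{q}|$ on the extended fiber. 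On $\cell$ this gives either $|z_m^{q}|\leq\delta^{|q|}|\varphi_m|$, which yields \eqref{eq: center condition}, or $|\varphi_m|\leq\delta^{|q|}|z_m^{q}|$. In the latter case, recentering at $0$ costs only a unit of logarithmic width $O(|\lambda|\delta^{|q|})$. In this picture the regime that breaks your Step 3 never arises.
\end{enumerate}
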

We note that the center $\theta$ in \Cref{thm: S-preparation} above depends on the choice of cell but not on $i$, that is we use the same center $\theta_i=\theta$ for all functions $f_i$.

\begin{remark}[Covers versus decompositions]
	While~\cite[Theorem 5.3]{CarmonAnalyticallyGenerated} yields a cover of $\rReal^m$ by cells rather than a disjoint decomposition, one can obtain a decomposition in either of the following two ways.
	First, we may replace the use of the real \s CPT in the proof of~\cite[Theorem 5.3]{CarmonAnalyticallyGenerated} by its ``disjoint version'' (see~\cite{Shankar2025} and \cite[Remark A.4]{BinyaminiCarmonNovikovComplexCells}).
	Alternatively, it suffices for our purposes to refine the resulting cover of~\cite[Theorem 5.3]{CarmonAnalyticallyGenerated} by disjoint cells of slightly higher complexity, by invoking our assumption that the structure $\{\FD[\format][\degree]\}$ has sharp cell decomposition.
\end{remark}

\subsection{\texorpdfstring{$\LE$-preparation}{LE-preparation}}
\label{sec: LE preparation proof}
In this section, we prove \Cref{thm: exp split}.
The proof proceeds by induction on several combinatorial features of the parse trees of the functions $f_1,\dots,f_k$ in the statement of the theorem.
We now describe these features.

\begin{definition}[Syntactic complexity]
	\label{def: syntactic complexity}
	Let $f\in\LE_{n+1}$.
	The \emph{exponential level} of $f$, which we denote by $e(f)$, is the maximal number of $\exp$-nodes on a path connecting a $y$-leaf to the root of the tree (including the root, if it is itself an $\exp$-node).

	A \emph{maximal subtree} of $f$ is a subtree $g$, rooted at an $\exp$-node, such that $e(f)=e(g)$.
	We denote by $\mathcal{E}(f)$ the set of distinct maximal subtrees of $f$.

	The \emph{logarithmic depth} $\operatorname{ld}(g)$ of a subtree $g$ of $f$ is the maximal number of $\log$-nodes on a path connecting the root of $f$ with any appearance of~$g$ (including the roots of $f$ and $g$, if they are themselves $\log$-nodes).
	Write $\ell(f)$ for the maximum of $\operatorname{ld}(g)$ over all maximal subtrees $g$ of~$f$.

	We denote by $\mathcal{LE}(f)\subset\mathcal{E}(f)$ the set of maximal subtrees that attain this maximal number of $\log$-nodes on a path connecting the root of $f$ with at least one of their occurrences in $f$.

	The \emph{syntactic complexity} of $f$ (called ``logarithmic--exponential nestedness'' in~\cite{vdDriesSpeissegger2002}) is the tuple 
	\begin{equation}
		(e(f),\#{\mathcal{E}(f)},\ell(f),\#{\mathcal{LE}(f)})\in\zIntegers_{\geq 0}^4.
	\end{equation}
	We order these tuples lexicographically, where $e(f)$ is the most significant component and $\#{\mathcal{LE}(f)}$ is the least significant.
\end{definition}

We note that all the terms in the syntactic complexity of an $\LE$-function are bounded by its tree-format.
Hence it suffices to prove \Cref{thm: exp split} with all asymptotic dependencies on $F$ replaced by dependence on the tuple $(F,\widetilde{F})$, where $\widetilde{F}=(e,\#{\mathcal{E}},\ell,\#{\mathcal{LE}})\in\zIntegers_{\geq 0}^4$ is an upper bound (with respect to the lexicographic order) for the syntactic complexities of the functions $f_1,\dots,f_k$ in the statement of the theorem.
We assume inductively that \Cref{thm: LE cylinder decomposition,prop: LE to LS,prop: LS constant sign,thm: exp split} hold for functions of syntactic complexity lower than $\widetilde{F}$.
In particular, we may assume that $\widetilde{F}$ is equal to the syntactic complexity of one of the functions $f_i$.
The base case of the induction is \Cref{prop: LS constant sign}, which corresponds to $e=0$.
Hence we may also assume from now on that $e>0$.

Throughout the proof, we will consider the restrictions of the functions $f_i$ to various $\LE$-cylinders.
Starting with such a cylinder $C$, we will often refine it by decomposing it into $\LE$-cylinders on which the functions $f_i$ are equal to $\LE$-functions $\widetilde{f}_i$ which are either simpler or otherwise of some desired form.
We will identify between $f_i$ and $\widetilde{f_i}$ whenever it is understood that we are restricting our attention to an $\LE$-cylinder on which they are equal.

Denote the distinct elements of $\mathcal{LE}(f_i)$ by $\exp(s_{i,1}),\dots,\exp(s_{i,q_i})$, ordered using any (universally fixed) ordering on the collection of $\LE$-functions.
Let $q=\max_i q_i$ (that is, $q=\#\mathcal{LE}$).
Allowing repetitions, we may write $\mathcal{LE}(f_i)=\{\exp(s_{i,1}),\dots,\exp(s_{i,q})\}$.
To ease notation, we abbreviate and write $\exp(s_i)$ for this ordered list of $\LE$-functions.
We will also write $\exp(s'_i)$ for the tuple $(\exp(s_{i,1}),\dots,\exp(s_{i,q-1}))$ which omits the last term $\exp(s_{i,q})$.
We note that these $\LE$-functions  are all of complexity $(F,D)$, as they are given by subtrees of the functions $f_i$.

Consider some occurrence of $\exp(s_{i,q})$ in $f_i$.
We can assume the root of $f_i$ is an $\structure$-node by composing with the identity function.
By contracting all \mbox{$\log$--$\exp$} edges and \mbox{$\structure$--$\structure$} edges in $f_i$, we may assume one of the following holds.
\begin{enumerate}
	\item This occurrence of $\exp(s_{i,q})$ is a child-node of an $\structure$-node which is the root of~$f_i$.
	\item Otherwise, this occurrence of $\exp(s_{i,q})$ is a child-node of an $\structure$-node which is itself the child-node of a $\log$-node.
\end{enumerate}
In either case, denote by $g_{i,j}$ the subtree of $f_i$ rooted at this $\structure$-node parent of $\exp(s_{i,q})$.
The index $j$ accounts for the possible different occurrences of $\exp(s_{i,q})$ as a subtree of $f_i$, each with a different $\structure$-node parent.
In order to make the notation less cumbersome, we will suppress from now on the subscript $j$ and denote these functions as $g_i$.
For each $i=1,\dots,k$, there are at most $\order[\format]{1}$ corresponding values of $j$, and so this does not affect any of our estimates.

There exist $p=\order[\format]{1}$ and $\LE$-functions $r_{i,1},\dots,r_{i,p}$ whose exponential level is lower than $e(f_i)$, such that 
\begin{equation}
	\label{eq: G functions}
	g_{i}=G_{i}(r_{i},\exp(s_i)),
\end{equation}
for $G_{i}\in\FD*[\format][\degree]$ and $r_{i}=(r_{i,1},\dots,r_{i,p})$.

We consider the space $\rReal^{p+q}$ with coordinates $(z,w)=(z_1,\dots,z_p,w_1,\dots,w_q)$.
We also write $w'=(w_1,\dots,w_{q-1})$.
Let the functions $T_i:\rReal^{n+1}\to\rReal^{p+q}$ and~$T'_i:\rReal^{n+1}\to\rReal^{p+q-1}$ be given by
\begin{equation}
	T_i(x,y)=(r_i,\exp(s_i)),\qquad
	T'_i(x,y)=(r_i,\exp(s'_i)).
\end{equation}
Thus, the maps $g_{i}$ factor as $G_{i}\comp T_i$.

We apply $\structure$-preparation (\Cref{thm: S-preparation}) to $\rReal^{p+q}$ and to the $\order[\format]{k}$ functions~$\{G_{i}\}$, obtaining a decomposition of $\rReal^{p+q}$ into $\polyfd[\format][\degree,k,1/\mu]$ cells of suitable complexity, over which each of the functions $G_{i}$ is $\mu/2$-prepared in the sense of \Cref{def: mu prepared}.

The proof now proceeds in two stages.
First, in \Cref{sec: LE pullback of a cell}, we decompose $\rReal^{n+1}$ using a collection of $\LE$-cylinders, each of them mapped by each of the maps $T_i$ into one of the cells covering $\rReal^{p+q}$.
Then, restricting our attention to one of these cylinders, we will further refine it in \Cref{sec: LE pulling back expansions} by $\LE$-cylinders on which each $f_i$ is of the desired form \eqref{eq: LA times exp}.

\subsubsection{Pullback of a cell}
\label{sec: LE pullback of a cell}
Let $\mathcal{D}$ denote the collection of cells covering $\rReal^{p+q}$, obtained by the application of $\structure$-preparation to the functions $\{G_{i}\}$ given in~\eqref{eq: G functions}.
For each cell $D\in\mathcal{D}$, write $D'$ for its base.
Let $\chi_{D'}$ be the indicator function of $D'$.
The cell $D$ is determined over $D'$ by the sign of up to two functions of the form $w_q-\alpha(z,w')$, where $\alpha:D'\to\rReal$ is in $\FD$.
We may assume $\alpha$ is identically $0$ outside of $D'$.
We note that~$\chi_{D'}$ and $\alpha$ are in $\FD*[\format][\degree]$.

We apply \Cref{thm: LE cylinder decomposition} inductively to the following functions, which are of syntactic complexity lower than $\widetilde{F}$ (indeed, either $q=1$, in which case their exponential level is lower than $e$, or else they have the same exponential level but at most $q-1<\#\mathcal{E}$ maximal subtrees):
\begin{equation}
	\label{eq: pullback of a cell LE}
		\chi_{D'}\comp T'_i,\quad \alpha\comp T'_i,\quad s_{i,q}-\log(\alpha\comp T'_i).
\end{equation}
We note that we apply \Cref{thm: LE cylinder decomposition} once, simultaneously, for all values of $D'$, $\alpha$ and~$i$.
(Recall that we extend the value of $\log$ to be zero for all non-positive inputs, so all functions in \eqref{eq: pullback of a cell LE} are well defined).

This yields a collection of $\LE$-cylinders of size $\polyfd[\format,\widetilde{\format},n][\degree,k,1/\mu]$, each of them of complexity $(\order[F,\widetilde{\format},n]{1},\polyfd[\format,\widetilde{\format},n][\degree])$.
Let $C$ be one of the resulting $\LE$-cylinders and assume $T_i(C)\cap D\neq \emptyset$ for some $D\in\mathcal{D}$.
The sign of $\chi_{D'}\comp T'_i$ is thus positive on all of $C$, i.e.\ $T_i(C)\subset D'\times\rReal$.
Constant sign for the other functions in \eqref{eq: pullback of a cell LE} implies that the sign of the functions $\exp(s_{i,q})-\alpha\comp T'_i$ is constant on $C$ for all $\alpha$ --- indeed, when $\alpha\comp T'_i\leq 0$, it is certainly smaller than $\exp(s_{i,q})$; otherwise, if $\alpha\comp T'_i> 0$, then the sign of $\exp(s_{i,q})-\alpha\comp T'_i$ is the same as that of $s_{i,q}-\log(\alpha\comp T'_i)$.
Thus $T_i(C)\subset D_i$ for some $D_i\in\mathcal{D}$.

\subsubsection{Pulling back the expansions}
\label{sec: LE pulling back expansions}
Let $C$ be the $\LE$-cylinder given in the previous subsection.
Pulling back the expansions \eqref{eq: mu prepared} of the functions $G_{i}$ on the cells $D_i$ by the maps $T_i$, we obtain corresponding expansions for the functions~$g_{i}$ on $C$:
\begin{equation}
	g_{i}(x,y)=\abs{\exp(s_{i,q})-\theta_{i}}^{\lambda_{i}}\cdot a_i(r_i,\exp(s'_i))\cdot u_i(r_i,\exp(s_i)),
\end{equation}
where $\lambda_{i}\in\qRationals$, the function $\theta_{i}\in\LE_{n+1}$ has complexity $(\order[F,n]{1},\polyfd[\format,n][\degree])$, the functions $a_i,u_i$ are in $\FD*[\format][\degree]$, and $u_i(r_i,\exp(s_i))$ is a positive function of logarithmic width at most $\mu/2$ on~$C$.
The function $a_i$ may be taken to be either identically $0$, strictly positive, or strictly negative.
The function $\theta_{i}$ is obtained by composing a function in $\structure$ with $T'_i$.

We distinguish two cases --- the case where $\theta_{i}\not\equiv 0$ and the case where $\theta_{i}\equiv 0$.
In the first case, we have by \eqref{eq: center condition} that
\begin{equation}
	\frac{2}{3}<\frac{\exp(s_{i,q})}{\theta_{i}} < 2.
\end{equation}
We may thus rewrite $\exp(s_{i,q})$ as
\begin{equation}
	\label{eq: restricted exp}
	\exp(s_{i,q}-\log\theta_{i})\cdot \theta_{i},
\end{equation}
where the exponential in \eqref{eq: restricted exp} is restricted to, say, $[-1,1]$.
In particular, we may view this exponential as a function in $\FD[\order{1}][\order{1}]$.

Replacing all occurrences of $\exp(s_{i,q})$ in the parse tree of $f_i$ with \eqref{eq: restricted exp}, we obtain a parse tree for $f_i$ of syntactic complexity lower than $\widetilde{\format}$ (as in \eqref{eq: pullback of a cell LE}), with respect to which $f_i$ is of complexity  $(\order[F,\widetilde\format,n]{1},\polyfd[\format,\widetilde\format,n][\degree])$.
We may now apply \Cref{thm: exp split} inductively (and simultaneously) to those $f_i$ for which this case holds.

It remains to treat the case where $\theta_{i}\equiv 0$.
In this case, we have
\begin{equation}
	\label{eq: zero center}
	g_i(x,y)=\exp(\lambda_{i}s_{i,q})\cdot a_i(r_i,\exp(s'_i))\cdot u_i(r_i,\exp(s_i)).
\end{equation}
For all those $i$ where $f_i=g_i$, we may inductively (and simultaneously) apply \Cref{thm: exp split} to the functions $a_i(r_i,\exp(s'_i))$, using $\mu/2$ as the bound on the logarithmic width, to obtain the desired expression \eqref{eq: LA times exp} for $f_i$.

Otherwise, $g_i$ is the child node of a $\log$-node in the tree of $f_i$.
We first assume that~$a_i>0$.
Using the expression in \eqref{eq: zero center}, we have
\begin{equation}
	\label{eq: log g}
	\log g_i(x,y)=\lambda_{i}s_{i,q}+\log a_i(r_i,\exp(s'_i))+\log u_i(r_i,\exp(s_i)).
\end{equation}
Assuming that $\mu=\order{1}$, we have that $\log u_i\in\FD*[\format][\degree]$.
Replacing all occurrences of $\log g_i$ in the parse tree of $f_i$ with the right-hand side of \eqref{eq: log g}, we obtain a parse tree with respect to which $f_i$ is of complexity  $(\order[F,n]{1},\polyfd[\format,n][\degree])$.
The syntactic complexity of this parse tree is lower than $\widetilde\format$.
If $a_i\leq 0$, we have that $\log g_i(x,y)=0$ by our convention, and so we may similarly replace all occurrences of $\log g_i$ in the parse tree of $f_i$.
We may thus apply \Cref{thm: exp split} inductively (and simultaneously) to all~$f_i$ for which this case holds.
This finishes the proof of \Cref{thm: exp split}. \qed

\subsection{\texorpdfstring{$\LA$-preparation}{LS-preparation}}
\label{sec: LS preparation proof}

In this section, we prove \Cref{prop: LS constant sign}, assuming \Cref{thm: LE cylinder decomposition} holds for all lower values of $n$.
We first show that it is enough to prove the following.
\begin{proposition}
	\label{prop: sign of base}
	Let $f_1,\dots,f_k\in\LA_{n+1}$.
	Then there is a finite decomposition of~$\rReal^{n+1}$ into $\LE$-cylinders, on each of which we have
	\begin{equation}
		\label{eq: function of base}
		\sign f_i (x,y)= \sign a_i(x),
	\end{equation}
	where $a_i\in\LE_n$.

	If $f_1,\dots,f_k$ are of complexity $(F,D)$, we may take the number of cylinders in this decomposition to be $\polyfd[\format,n][\degree,k]$, where each of the cylinders and each of the functions $a_i$ are of complexity $(\order[\format,n]{1},\polyfd[\format,n][\degree])$.
\end{proposition}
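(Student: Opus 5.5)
We prove \Cref{prop: sign of base} by induction on the logarithmic depth $\ell$ of $y$ in the $f_i$, following the Lion--Rolin/van den Dries--Speissegger strategy. The $\structure$-preparation theorem (\Cref{thm: S-preparation}) does the work at each level. The required estimates hold because every preparation step costs only $\polyfd[\format][\degree,k,1/\mu]$ pieces, and the number of induction steps is $\order[\format]{1}$.

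\emph{Base case $\ell=0$.} Here $y$ enters $f_i$ only through an $\structure$-node, so $f_i(x,y)=G_i(r_i(x),y)$. The $r_i=(r_{i,1},\dots,r_{i,p})$ are $\LE_n$-functions of complexity $(F,D)$, and $G_i\in\FD*[\format][\degree]$. We apply \Cref{thm: S-preparation} to the $G_i$ on $\rReal^{p+1}$, with last variable $y$ and fixed $\mu$, say $\mu=1$. We then pull the resulting cells back along $(x,y)\mapsto(r_i(x),y)$, exactly as in \Cref{sec: LE pullback of a cell}. Concretely, we apply \Cref{thm: LE cylinder decomposition} in dimension $n$ (available by the standing induction on $n$) to the functions $\chi_{D'}\comp r_i$. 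Over each resulting cylinder we cut along the graphs $y=\alpha\comp r_i$ and $y=\theta\comp r_i$. This gives $\LE$-cylinders on which $f_i=\abs{y-\theta(r_i)}^{\lambda_i}\,a_i(r_i)\,u_i$ with $u_i>0$. Hence $\sign f_i=\sign a_i(r_i(x))$, and $a_i\comp r_i\in\LE_n$ has the required complexity.

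\emph{Inductive step.} Let $\ell>0$, and let $\log(h_{i,1}),\dots,\log(h_{i,q})$ be the outermost log-nodes on paths to $y$-leaves. Each $h_{i,j}$ lies in $\LA_{n+1,\ell-1}$, and $f_i=G_i(r_i(x),y,\log h_{i,1},\dots,\log h_{i,q})$. We first apply the inductive hypothesis in a strengthened, preparation form (see the main obstacle below) to the $h_{i,j}$. This yields cylinders on which each $h_{i,j}$ either has constant sign $\le 0$, so that $\log h_{i,j}=0$ by convention, or equals
\[
h_{i,j}=\abs{L-\theta}^{\lambda}\,b(x)\,u
\]
with $b>0$ and $u$ a $\mu$-unit. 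Here $L$ is an iterated log--translate ``scale'' in $y$. It follows that
\[
\log h_{i,j}=\lambda\log\abs{L-\theta}+\log b(x)+\log u,
\]
and $\log u$ is an $\structure$-function of a bounded quantity. We substitute this into $f_i$. Now $y$ appears only through the new scale variable $y_1=\log\abs{L-\theta}$ together with lower-depth data, so $f_i$ becomes an $\structure$-function of $(x\text{-terms},\,y_1,\,\text{bounded units})$. We apply \Cref{thm: S-preparation} in the variable $y_1$ as in the base case, and pull the cells back to $\LE$-cylinders. The pullback uses monotonicity of $y\mapsto y_1$ on each cylinder: sign conditions $y_1\gtrless\alpha(x)$ become $y\gtrless$ an explicit $\LE_n$-function, obtained by inverting $\exp$ and the translates. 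The unit factors are the only remaining $y$-dependence, and since they are positive they do not affect the sign.

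\emph{Main obstacle.} The statement as given only controls signs, which is too weak to feed into the next logarithm. I would therefore prove the stronger statement that each $f_i$ is prepared as $\abs{L_\ell-\theta}^{\lambda}a(x)u$ with respect to a common recursive log-scale $L_0=y,\ L_{j+1}=\log\abs{L_j-\theta_j(x)}$, and deduce \eqref{eq: function of base} from it. The genuine difficulty is that the units $u$ still depend on $y$ and must themselves be absorbed into $\structure$-functions. This works because a $\mu$-unit with $\mu=\order{1}$ has $\log u$ bounded, and one can substitute $u$ as an additional bounded variable; we must check that this does not raise the syntactic complexity. A second point to check is that a single common center per level can be arranged for all $k$ functions simultaneously; \Cref{thm: S-preparation} provides this. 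Keeping every bound polynomial in $k$ and $D$ then follows from \Cref{rem: refine two covers}.
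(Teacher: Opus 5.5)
Your plan to prove a stronger preparation statement and read off the sign is the paper's route: the proposition is deduced from \Cref{thm: LS preparation}. Your base case is essentially the paper's $\ell=0$ pullback argument for the sign statement, provided you also refine the base along pairwise differences of the walls $\alpha\comp r_i$ so that they are ordered. The inductive step, however, fails. After substituting the prepared forms of the $h_{i,j}$, the function $f_i$ is \emph{not} an $\structure$-function of $x$-terms, $y_1$ and inert units. First, $y$ still enters $G_i$ directly. Second, each unit is an $\structure$-function of all the lower scale terms, hence of $y$. If you prepare with respect to the deepest variable $y_1$ while treating these $y$-dependent quantities as parameters, the resulting centers, coefficients and walls $\alpha$ depend on $y$. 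So you obtain neither $\sign f_i=\sign a_i(x)$ nor $\LE$-cylinders. Positivity of $u$ does not help, because $u$ enters $G_i$ nonlinearly rather than as a factor. Concretely, take $f=y-x\log y$ on $y>1$. Your step gives $f=G(x,y,y_1)$ with $G=y-xy_1$, and preparing in $y_1$ produces the wall $y_1=y/x$. Deciding the sign of $y-x\log y$ is exactly the depth-one problem, and your step never resolves it. Separately, the prepared form must be a monomial $\abs{y_0}^{\lambda_0}\cdots\abs{y_\ell}^{\lambda_\ell}$ in all scale terms, not a single factor $\abs{L_\ell-\theta}^{\lambda}$; the function $y\log y$ already shows this.

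The missing idea is the choice of preparation variable together with a change of coordinates. The paper keeps all scale terms as arguments, writing $f_i=F_i(\varphi_i(x),y_\ell,\dots,y_0)$; this is how $\log u$ is absorbed. It then prepares with respect to the \emph{shallowest} term $w_0=y_0$, with $(w_\ell,\dots,w_1)$ as parameters. Centers, coefficients and walls then depend only on $(\varphi_i(x),y_\ell,\dots,y_1)$, which as functions of $(x,y_1)$ lie in $\LA_{n+1,\ell-1}$. So induction on $\ell$ applies to them, and \Cref{lem: kappa} carries the resulting cylinders back to the original coordinates. For a non-zero center, combine $y_0\approx\theta_i$ with a preparation of $\theta_i$ and take logarithms, after splitting according to the size of the $y_j$. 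This forces $y_1$ (or $y_0$ when $\ell=1$) to be comparable to an $\LE_n$-function, so the depth drops by \Cref{lem: LS complexity drop}. Walls of the form $y_0=\alpha(\varphi_i,y_\ell,\dots,y_1)$ are handled by the local continuity argument of \Cref{sec: ell positive pullback of a cell}.

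Two further claims in your proposal are also incorrect. First, $\structure$-preparation does not give a common center across different $i$, because each $f_i$ is pulled back along its own map $T_i$; this is what the $\rho_{i,j}$ argument of \Cref{sec: ell 0 pulling back expansions} is for. Second, iterating \Cref{rem: refine two covers} over $k$ covers gives a number of pieces exponential in $k$. The polynomial dependence on $k$ comes instead from applying \Cref{thm: LE cylinder decomposition} in dimension $n$ to all pairwise differences of walls.
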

Indeed, by induction on $n$, we apply \Cref{thm: LE cylinder decomposition} to $\rReal^n$ and the functions~$a_i$ of \eqref{eq: function of base}, obtaining a decomposition of $\rReal^n$ into $\LE$-cylinders on which the functions $a_i$ have constant sign.
Multiplying each of these cylinders on the right by $\rReal$ we obtain a decomposition of $\rReal^{n+1}$ into $\LE$-cylinders.
We may then take a common refinement (as in \Cref{rem: refine two covers}) of this decomposition with the one obtained by \Cref{prop: sign of base}.

We recall the notion of a \emph{logarithmic scale} from~\cite{vdDriesSpeissegger2002}.
\begin{definition}
	\label{def: log scale}
	Let $C\subset\rReal^{n+1}_{y\ne 0}$ be an $\LE$-cylinder.
	A \emph{log-scale} on $C$ is a finite sequence of functions $y_i\in\LA_{n+1,i}$ satisfying the following conditions.
	\begin{enumerate}
		\item The functions $y_i$ have constant, non-zero sign on $C$;
		\item There exists a sequence of \emph{centers} $\theta_i\in\LE_n$ such that
		\begin{equation}
			\qquad y_0=y-\theta_0(x),\quad y_{i+1}=\log\abs{y_i}-\theta_{i+1}(x);
		\end{equation}
		\item The following \emph{center conditions} are satisfied on $C$.
		If $\theta_0$ or $\theta_{i+1}$ are not identically $0$, then they are nowhere vanishing and
		\begin{equation}
			\label{eq: log scale center condition}
			\qquad 0<\abs{y_0}<\frac{\abs{y}}{2}\quad\text{or}\quad 0<\abs{y_{i+1}}<\frac{1}{2}\abs{\log\abs{y_i}},
		\end{equation}
		respectively.
	\end{enumerate}
\end{definition}

\begin{definition}[$\LA$-prepared]
	\label{def: LA mu prepared}
	Let $C\subset\rReal^{n+1}_{y\ne 0}$ be an $\LE$-cylinder and let~${\mu>0}$.
	A function $f:C\to\rReal$ in $\LA_{n+1,\ell}$ is \emph{$(\LA,\mu)$-prepared} if there exists a log-scale $y_0,\dots,y_\ell$ on $C$ corresponding to centers $\theta_0,\dots,\theta_\ell\in\LE_n$, such that
	\begin{equation}
		\label{eq: LS prepared}
		f(x,y)=\abs{y_0}^{\lambda_{0}}\cdots \abs{y_\ell}^{\lambda_{\ell}}\cdot a(x)\cdot u(\varphi(x),y_\ell,\dots,y_0)
	\end{equation}
	on $C$,	where $\lambda_{0},\dots,\lambda_{\ell}\in\qRationals$, the functions $a$ and the components $(\varphi_{1},\dots,\varphi_{{m}})$ of~$\varphi$ are in $\LE_n$, and $u\in\FD$ is a $\mu$-unit on the image of $C$ under $(\varphi(x),y_\ell,\dots,y_0)$.
	
	When it is clear from context that $f\in\LA_{n+1,\ell}$, we will simply say that it is~\emph{$\mu$-prepared}.
\end{definition}

\Cref{prop: sign of base} follows from the following more explicit statement.
\begin{theorem}[$\LA$-preparation]
	\label{thm: LS preparation}
	Let $f_1,\dots,f_k\in\LA_{n+1,\ell}$ be of complexity $(\format,\degree)$ and let ${\mu>0}$.
	Then there is a decomposition of $\rReal^{n+1}$ into $\polyfd[\format,n,\ell][\degree,k,\mu^{-1}]$-many $\LE$-cylinders, compatible with $\{y=0\}$, such that on each cylinder $C\subset\rReal^{n+1}_{y\neq 0}$ each $f_i$ is $\mu$-prepared.

	We may take each of the cylinders and each of the functions $\theta_{i,j},a_i,\varphi_{i,j}$ in \eqref{eq: LS prepared} to be of complexity $(\order[\format,n,\ell]{1},\polyfd[\format,n,\ell][\degree])$, the $\mu$-units $u_i$ in \eqref{eq: LS prepared} to be in $\FD*[\format][\degree]$, and the height of the exponents $\lambda_{i,j}$ in \eqref{eq: LS prepared} to be at most~$\polyfd$.
\end{theorem}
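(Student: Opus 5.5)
We will prove \Cref{thm: LS preparation} by induction on the syntactic structure of the parse trees, in the style of the proof of \Cref{thm: exp split}. The outer induction is on the logarithmic depth $\ell$. The inner induction is on the number of distinct maximal $\log$-subtrees containing $y$ at depth $\ell$. Throughout, we assume \Cref{thm: LE cylinder decomposition} for $\rReal^n$, since it holds for lower $n$.

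\emph{Base case $\ell=0$.} Each $f_i$ has the form $G_i(r_i(x),y)$ with $G_i\in\FD*[\format][\degree]$ and $r_i\in\LE_n$. We apply \Cref{thm: S-preparation} once to all the $G_i$ on $\rReal^{p+1}$, with $y$ as the last coordinate. This gives cells over which
\[
G_i=\abs{y-\theta}^{\lambda_i}a_i\,u_i.
\]
We then pull back by $x\mapsto r_i(x)$. To do this, we apply \Cref{thm: LE cylinder decomposition} (in $\rReal^n$) to the functions $\chi_{D'}\comp r_i$, and to differences and signs of the wall functions and the centers $\theta\comp r_i$. This produces $\LE$-cylinders in $\rReal^n$ over which each $r_i$ lands in a fixed base cell. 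In the $y$-direction, the walls of the cells $D$ pulled back by $r_i$ become walls of $\LE$-cylinders in $\rReal^{n+1}$. The common refinement of \Cref{rem: refine two covers} handles the different $i$ simultaneously. The center condition \eqref{eq: center condition} pulls back to \eqref{eq: log scale center condition} for $y_0=y-\theta_0$. The count is $\poly(D,k,1/\mu)$, since each application is polynomial.

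\emph{Inductive step.} Let $\log(g)$ be a $\log$-node at maximal depth $\ell$ whose subtree contains $y$. Below it, $g$ is an $\LA$-function of depth $0$ relative to this node; more precisely, $\log g$ has one fewer logarithm above the $y$-leaves. We first prepare all such inner arguments $g_{i,j}$ using the case $\ell=0$, with $\mu$ replaced by $\mu/2$. This gives
\[
g=\abs{y_0}^{\lambda}a(x)u,
\]
with a common log-scale start $y_0$. On pieces where $a>0$ we get
\[
\log g=\lambda\log\abs{y_0}+\log a(x)+\log u .
\]
Here $\log u$ is a function in $\structure$ of the bounded arguments $(\varphi(x),y_0)$, since $u$ is a $\mu$-unit with $\mu=O(1)$. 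Where $a\le 0$, we have $\log g=0$. After this substitution, $f_i$ becomes an $\LA$-function in the new variable $\tilde y=\log\abs{y_0}$, with parameters $x$ and the extra bounded quantities. Its logarithmic depth in $\tilde y$ is $\ell-1$, because the remaining occurrences of $y_0$ appear only inside units, which are bounded compositions.

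The main obstacle is exactly these residual occurrences of $y_0$ inside $u(\varphi,y_0)$. To handle them, we rewrite $y_0=\pm\exp(\tilde y)$. On each piece, either $\abs{y_0}$ stays bounded and away from $0$, so the exponential is restricted and hence lies in $\structure$, or it does not. In the latter case we instead treat $y_0$ as a unit argument and keep it as the last entry of the scale, as in \eqref{eq: LS prepared}, where $u$ is allowed to depend on $y_\ell,\dots,y_0$.

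We then apply the inductive hypothesis to the resulting functions in the variable $\tilde y$, over the base $\rReal^n$, obtaining a log-scale $\tilde y_0,\dots,\tilde y_{\ell-1}$ with $\tilde y_0=\tilde y-\theta_1(x)$. Setting $y_{j+1}=\tilde y_j$ produces the required scale for $f_i$. The monomial factors multiply together, and the composite unit has logarithmic width at most $\mu$.

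To transfer the cylinders in $\tilde y$ back to cylinders in $y$, we use that $y\mapsto\log\abs{y-\theta_0}$ is monotone on each side of $\theta_0$. Under this map, walls $h(x)$ become $\theta_0\pm\exp h$, with only $O(1)$ growth in complexity. Each step multiplies the number of pieces by a polynomial, and there are $O_{F}(1)$ induction steps, bounded by $\ell$ and the number of nodes. The final bound is therefore $\poly_{F,n,\ell}(D,k,1/\mu)$. The exponent heights remain $\poly(D)$ because each $\lambda$ comes from a single application of \Cref{thm: S-preparation}.
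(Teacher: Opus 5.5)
There is a genuine gap in your inductive step. After you prepare the innermost arguments and substitute $\log g=\lambda\log|y_0|+\log a(x)+\log u$, the function $f_i$ is in general \emph{not} an $\LA$-function of depth $\ell-1$ in $\tilde y=\log|y_0|$. The variable $y$ typically also occurs at smaller logarithmic depth, not just inside the units, and at those occurrences $y=\theta_0(x)\pm e^{\tilde y}$ is an unrestricted exponential of the new variable. For example, $f=y+\log|y|$ becomes $\pm e^{\tilde y}+\tilde y$.

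Your bounded/unbounded dichotomy does not fix this. When $|y_0|$ is unbounded you cannot simply ``keep $y_0$ as the last entry of the scale'': $f_i$ is then a genuine $\structure$-combination of $y_0,\log|y_0|,\dots$, and bringing such a combination into monomial-times-unit form is exactly what must be proved.

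The paper works in the opposite order. It first applies the induction to all subtrees of depth at most $\ell-1$ (the arguments of the outermost logarithms together with the shallow occurrences of $y$). This yields a \emph{single} log-scale with $f_i=F_i(\varphi_i(x),y_\ell,\dots,y_0)$ and $F_i\in\structure$. It then applies \Cref{thm: S-preparation} with the $y_0$-slot $w_0$ as the last variable.
\begin{itemize}
\item If all centers vanish, then $a_i(\varphi_i,y_\ell,\dots,y_1)$ and $y_1,\dots,y_\ell$ are $\LA$-functions of depth $\ell-1$ in $(x,y_1)$. One inducts and transfers cylinders back via \Cref{lem: kappa}.
\item If some center is nonzero, then $y_0$ is comparable to $\theta_i(\varphi_i,y_\ell,\dots,y_1)$. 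Split according to whether some $|y_j|\le M$, prepare $\theta_i$ inductively, and take logarithms. This shows that $y_1$ is comparable to the $x$-function $\log|A(x)|-\theta_1(x)$, so \Cref{lem: LS complexity drop} lowers the depth.
\end{itemize}
Pulling back the cells themselves also needs the separate local argument on the sets $\widetilde C_{i,\alpha}$, because the walls $w_0=\alpha(z,w')$ depend on $y$ through $y_1,\dots,y_\ell$. None of these steps has a counterpart in your proposal.

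A second missing ingredient is the common center. \Cref{thm: S-preparation} gives one center $\theta$ per cell of the auxiliary space. After pulling back by the different maps $r_i$, however, the centers $\overline\theta_i=\theta\circ r_i$ differ. So your ``common log-scale start $y_0$'' for all the $g_{i,j}$ is not justified, and knowing the signs of $\overline\theta_i-\overline\theta_j$ does not supply it.

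The paper handles this by cutting each $y$-fibre along the level sets $|\rho_{i,j}|=c$ of $\rho_{i,j}=(y-\overline\theta_i)/(\overline\theta_i-\overline\theta_j)$. Here $c$ runs over a geometric progression of ratio $e^{\mu/\poly(D)}$ with $\poly(D,\mu^{-1})$ terms. On each piece one of $|\rho_{i,j}|$, $|\rho_{j,i}|$, $|1+\rho_{i,j}^{-1}|$ has logarithmic width at most $\mu/\poly(D)$. This lets one replace $|y-\overline\theta_i|$ either by the $x$-function $|\overline\theta_i-\overline\theta_j|$, or by $|y-\overline\theta_{i_0}|$ up to a unit.

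Concatenating all the $r_i$ into one map would avoid this, but it makes the format grow with $k$ and loses polynomial dependence on $k$. For the same reason, the common refinement over different $i$ must come from one simultaneous application of \Cref{thm: LE cylinder decomposition}, not from iterating \Cref{rem: refine two covers}.
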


In order to motivate the notion of a log-scale, as well as for use in the proof of \Cref{thm: LS preparation}, we give \Cref{lem: split y coordinate,lem: kappa,lem: LS complexity drop} below.
For all of these, let $C\subset\rReal^{n+1}$ be an $\LE$-cylinder and let $y_0,\dots,y_\ell$ be a log-scale on $C$, corresponding to centers $\theta_0,\dots,\theta_\ell\in\LE_n$.
Assume that $C$, $y_i$ and $\theta_i$ are all of complexity $(\format,\degree)$.

\begin{lemma}
	\label{lem: split y coordinate}
	Let $\psi,\psi_1,\psi_2\in\LE_n$ (or equal to $\pm\infty$) and let $i=0,\dots,\ell$. 
	Then the sets ${C\cap\{y_i=\psi\}}$ and $C\cap\{\psi_1<y_i<\psi_2\}$ are $\LE$-cylinders.

	If $\psi,\psi_1,\psi_2$ are of complexity $(\format,\degree)$, then the resulting cylinders are of complexity $(\order[\format]{1},\polyfd)$.
\end{lemma}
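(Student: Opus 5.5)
We will show that $y_i=\psi$ and $\psi_1<y_i<\psi_2$ can be solved explicitly for $y$, with solutions given by $\LE$-functions of $x$. The cylinder $C$ then gets cut along these graphs and intervals.

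The key observation is that each $y_i$ is strictly monotone in $y$ on each fiber of $C$ and can be inverted explicitly. By the definition of a log-scale, each $y_j$ has constant nonzero sign $\sigma_j\in\{\pm1\}$ on $C$. Hence on $C$ we have
\begin{equation}
	y_{j}=\sigma_{j}\exp(y_{j+1}+\theta_{j+1}(x)),\qquad y=y_0+\theta_0(x).
\end{equation}
For $\psi\in\LE_n$ we define inductively $\Psi_i=\psi$ and $\Psi_{j}=\sigma_j\exp(\Psi_{j+1}+\theta_{j+1})$ for $j<i$, and finally $\Phi_\psi=\Psi_0+\theta_0\in\LE_n$. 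Then on $C$ the condition $y_i=\psi(x)$ is equivalent to $y=\Phi_\psi(x)$, together with the sign conditions $\sigma_j\Psi_j(x)>0$ for $j\le i$.

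These sign conditions do hold automatically. Each map $y_{j}\mapsto y_{j+1}=\log\abs{y_j}-\theta_{j+1}$ is injective on the half-line of sign $\sigma_j$. So if $y_i(x,y)=\psi(x)$ for some $y$ in the fiber, then $y_j(x,y)=\Psi_j(x)$ for every $j$, and the signs match. Each map is also strictly monotone, so the composite $y\mapsto y_i$ is strictly monotone on each fiber, with a direction determined by the $\sigma_j$.

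We now write the two sets as cylinders. $C\cap\{y_i=\psi\}$ is the graph of $\Phi_\psi$ over the base
\begin{equation}
	C'\cap\{x : (x,\Phi_\psi(x))\in C\}\cap\{\sigma_j\Psi_j(x)>0,\ j\le i\}.
\end{equation}
This base is an $\LE$-set, and by \Cref{lem: complexity to indicator} its indicator function is a product of $\order[\format]{1}$ indicator functions. Here $\chi_C$ is composed with $(x,\Phi_\psi(x))$, and each $\chi_+$ is composed with $\sigma_j\Psi_j$.

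For the interval $C\cap\{\psi_1<y_i<\psi_2\}$, monotonicity shows that on each fiber the set is the intersection of the fiber of $C$ with the interval between $\Phi_{\psi_1}$ and $\Phi_{\psi_2}$, in an order determined by the $\sigma_j$. Two adjustments are needed.
\begin{itemize}
	\item When $\sigma_j\Psi_j\le0$ for some $j$, the value $\psi$ lies outside the range of $y_i$. In that case the wall should be replaced by the appropriate endpoint of the $y_j$-half-line, mapped back. This is one of $\pm\infty$, or a lower center pulled back, such as $y=\theta_0$ or $y=\theta_0\pm\exp(\cdots)$; each is an explicit $\LE$-function.
	\item We then intersect with $C$ using the observation of \Cref{rem: cell vs cylinder}. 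The walls become maxima and minima of the walls of $C$ and the new walls, and the base acquires the extra condition that the lower wall is below the upper wall.
\end{itemize}
Maxima, minima and the case distinctions are expressed with indicator functions $\chi_+$, which are in $\structure$.

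Finally, the complexity. Every function built above involves $\order[\format]{1}$ composition nodes on top of the given functions $\psi,\theta_j$, the walls of $C$ and $\chi_{C'}$. The depth $i\le\ell$ is bounded by the tree-format, since $y_\ell$ has complexity $(\format,\degree)$. The degrees therefore add up to $\poly_\format(\degree)$, which gives complexity $(\order[\format]{1},\polyfd)$.

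The main obstacle is the bookkeeping of the boundary cases where $\psi_1$ or $\psi_2$ leaves the range of $y_i$. Handling them uniformly, without disconnecting the fiber, is what requires the replacement by endpoints above. Monotonicity keeps each fiber an interval, so this is manageable.
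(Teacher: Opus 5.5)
Your proposal is correct and is essentially the paper's argument. The paper inducts on $i$: it rewrites $C\cap\{\psi_1<y_i<\psi_2\}$ as $C\cap\{\exp(\psi_1+\theta_i)<\pm y_{i-1}<\exp(\psi_2+\theta_i)\}$ and, at $i=0$, intersects two $\LE$-cylinders as in \Cref{rem: cell vs cylinder}; your explicit inverse $\Phi_\psi$ is this same recursion unrolled, and your endpoint adjustments are harmless but unnecessary, since $\sigma_j\Psi_j=\exp(\Psi_{j+1}+\theta_{j+1})>0$ automatically for $j<i$ (with $\exp(\pm\infty)\in\{0,+\infty\}$ already giving the right endpoints) and the sign condition at level $i$ is enforced simply by intersecting with $C$.
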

\begin{proof}
	We treat the case of $C\cap\{\psi_1<y_i<\psi_2\}$, the other case being similar.
	For $i=0$, we have that
	\begin{equation}
		C\cap\{\psi_1<y_0<\psi_2\}=C\cap\{\psi_1+\theta_0<y<\psi_2+\theta_0\}
	\end{equation}
	is the intersection of two $\LE$-cylinders and so is an $\LE$-cylinder (see \Cref{rem: cell vs cylinder}).
	For $i>0$, we have
	\begin{align}
		\begin{split}
			C\cap\{\psi_1<y_i<\psi_2\}&=C\cap\{\psi_1+\theta_i<\log\abs{y_{i-1}}<\psi_2+\theta_i\}\\
			&=C\cap\{\exp(\psi_1+\theta_i)<\abs{y_{i-1}}<\exp(\psi_2+\theta_i)\}.
		\end{split}
	\end{align}
	Since the sign of $y_{i-1}$ is constant on $C$, we may replace $\abs{y_{i-1}}$ by $\pm y_{i-1}$ and proceed by induction on $i$.
	The bounds on the complexity of the resulting cylinders follow directly from the construction.
\end{proof}

\begin{lemma}
	\label{lem: kappa}
	Let $\kappa:C\to\rReal^{n+1}$ be the map given by $\kappa(x,y)=(x,y_1(x,y))$.
	Let $\widehat C\subset\rReal^{n+1}$ be an $\LE$-cylinder of complexity $(\format,\degree)$.
	Then the preimage $\kappa^{-1}(\widehat C)\subset C$ is an $\LE$-cylinder of complexity $(\order[\format]{1},\polyfd)$.
\end{lemma}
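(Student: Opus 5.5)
The plan is to unwind the cylinder $\widehat C$ through the coordinate change $\kappa$, using the fact that $\kappa$ only alters the last coordinate, through $y_1=\log\abs{y-\theta_0(x)}-\theta_1(x)$.
Suppose $\widehat C$ has base $\widehat C'\subset\rReal^n$ and walls $\psi$ (graph case) or $\psi_1<\psi_2$ (interval case), possibly $\pm\infty$.
Since $\kappa$ preserves $x$, we have
\begin{equation}
	\kappa^{-1}(\widehat C)=C\cap(\widehat C'\times\rReal)\cap\{y_1=\psi(x)\}\quad\text{or}\quad C\cap(\widehat C'\times\rReal)\cap\{\psi_1(x)<y_1<\psi_2(x)\}.
\end{equation}

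First, $\widehat C'\times\rReal$ is an $\LE$-cylinder with base $\widehat C'$ and walls $\pm\infty$.
The indicator $\chi_{\widehat C'}$ has complexity $(\format,\degree)$ by \Cref{def: cylinder complexity}, so this cylinder has complexity $(\order[\format]{1},\degree)$.
Its intersection with $C$ is again a single $\LE$-cylinder by \Cref{rem: cell vs cylinder}.
The new base is $C'\cap\widehat C'$, whose indicator is the product $\chi_{C'}\chi_{\widehat C'}$, and the walls are unchanged.
This only adds $\order{1}$ nodes, and the tree-degrees add.

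Next, I apply \Cref{lem: split y coordinate} with $i=1$ to the cylinder $C\cap(\widehat C'\times\rReal)$, which carries the restricted log-scale $y_0,y_1$.
This shows that intersecting with $\{y_1=\psi\}$ or $\{\psi_1<y_1<\psi_2\}$ yields an $\LE$-cylinder of complexity $(\order[\format]{1},\polyfd)$.
Concretely, the wall condition becomes $\exp(\psi_1+\theta_1)<\pm y_0<\exp(\psi_2+\theta_1)$, where the sign is that of $y_0$, which is constant on $C$.
It then becomes $y$ lying between $\theta_0\pm\exp(\psi_j+\theta_1)$, followed by one more intersection via \Cref{rem: cell vs cylinder}.
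The walls of the result are $\max$ or $\min$ of the old walls of $C$ and these new functions.
Each such function is a composition of $\psi_j$ and $\theta_0,\theta_1$ with $\exp$ and a semialgebraic function, so its tree-format is $\order[\format]{1}$ and its tree-degree is $\order{\degree}$.

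The only mild obstacle is bookkeeping. I must check that the walls stay strictly ordered on the new base and that the new base is an $\LE$-set of controlled complexity.
Both points are handled exactly as in \Cref{rem: cell vs cylinder}: we add to the base the sign conditions $\{\text{lower}<\text{upper}\}$, whose indicators have complexity $(\order[\format]{1},\polyfd)$.
The infinite walls $\psi_j=\pm\infty$ correspond to $\exp(\pm\infty)\in\{0,\infty\}$. These are handled by simply omitting the corresponding constraint, noting that $y_0\neq 0$ on $C$ anyway.
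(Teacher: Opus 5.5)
Your proof is correct and follows the paper's argument. The paper likewise writes $\kappa^{-1}(\widehat C)=C\cap(\widehat C'\times\rReal)\cap\{f_1<y_1<f_2\}$ (or the graph analogue) and concludes directly from \Cref{lem: split y coordinate}. Your extra unwinding, which turns the $y_1$-condition into bounds $\theta_0\pm\exp(\psi_j+\theta_1)$ on $y$ and intersects via \Cref{rem: cell vs cylinder}, is just that lemma's proof specialized to $i=1$.
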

\begin{proof}
	Write $\widehat C'$ for the base of $\widehat C$.
	If $\widehat C=(f_1,f_2)_{\widehat C'}$ for $f_1,f_2\in\LE_n$, then
	\begin{equation}
		\kappa^{-1}(\widehat C)=C\cap(\widehat C'\times\rReal)\cap \{f_1<y_1<f_2\}
	\end{equation}
	and so the result follows from the previous lemma.
	The other case, where $\widehat C$ is the graph of an $\LE$-function, is treated in the same way.
\end{proof}
We interpret \Cref{lem: kappa} as meaning ``an $\LE$-cylinder with respect to the coordinates $(x,y_1)$ is also an $\LE$-cylinder with respect to the coordinates $(x,y)$''.
The same result holds with $y_1$ replaced by other terms of the log-scale on $C$, though this case will suffice for the proof of \Cref{thm: LS preparation}.

\begin{lemma}
	\label{lem: LS complexity drop}
	Let $j\in\{0,\dots,\ell-1\}$ and let $\psi\in\LE_n$ be of complexity $(\format,\degree)$.
	Assume that $\abs{y_j/\psi}$ does not vanish and is of logarithmic width at most $\order{1}$ on~$C$.
	Then $y_0,\dots,y_\ell$ may be considered as elements of $\LA_{n+1,\ell-1}$ of complexity $(\order[\format]{1},\polyfd)$.
\end{lemma}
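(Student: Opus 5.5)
The idea is to trade the $\log$-node that produces $y_{j+1}$ for a restricted logarithm of a unit. On $C$ the quotient $v=\abs{y_j/\psi}$ is nonvanishing and has logarithmic width $\order{1}$. Hence $\log v$ takes values in an interval $[c,c+M]$ with $M=\order{1}$. Write
\begin{equation}
	\log\abs{y_j}=\log\abs{\psi(x)}+\log\qty(\abs{y_j}/\abs{\psi(x)}).
\end{equation}
Here $\log\abs{\psi(x)}$ has no $y$-leaves at all. The second term has a $\log$-node applied to a function whose values lie in the compact interval $[e^c,e^{c+M}]\subset\rReal_{>0}$.

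The first step is to remove the dependence on the unknown offset $c$, which may be unbounded. I would choose a constant, or better fold $e^{c}$ into $\psi$: replace $\psi$ by $\psi\cdot e^{c}$ with $c$ a fixed real number. This costs nothing in complexity, since multiplication by a constant is an $\structure$-operation. After this replacement, $v$ takes values in $[1,e^M]$. On that interval, $\log$ restricted to $[1,e^M]$ is definable in $\structure$. This uses the hypothesis that $\structure$ contains $\restrict{\exp}{[0,1]}$, hence its inverse, and by rescaling the restriction of $\log$ to any fixed compact subinterval of $\rReal_{>0}$, with format and degree $\order{1}$. The map $t\mapsto \restrict{\log}{[1,e^M]}(\abs{t})$ composed with the $\structure$-operation $(a,b)\mapsto a/b$ is then an $\structure$-node. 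Consequently
\begin{equation}
	y_{j+1}=\log\abs{\psi(x)}-\theta_{j+1}(x)+L\qty(y_j,\psi(x)),\qquad L\in\FD[\order{1}][\order{1}],
\end{equation}
holds on $C$. Outside $C$ we are free to alter the expression, since we only care about values on $C$. The $\log$-nodes now lie only above $x$-leaves, or above a composite $\structure$-node.

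The second step substitutes this new parse tree for $y_{j+1}$ into the recursive definitions of $y_{j+2},\dots,y_\ell$. In the tree for $y_i$ with $i>j$, every path from the root to a $y$-leaf used to pass through the $\log$-node creating $y_{j+1}$. It now passes through the $\structure$-node $L$ instead. The paths through $\log\abs{\psi}$ contain only $x$-leaves. Hence the number of $\log$-nodes above $y$-leaves drops by one. This gives $y_i\in\LA_{n+1,i-1}$ for $i>j$, and trivially $y_i\in\LA_{n+1,i}\subset\LA_{n+1,\ell-1}$ for $i\le j\le\ell-1$. Thus all $y_0,\dots,y_\ell$ lie in $\LA_{n+1,\ell-1}$.

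The complexity bookkeeping is routine. We add $\order{1}$ nodes (for $\psi$, $\log$, $L$ and the division) plus one copy of $\psi$, which has complexity $(\format,\degree)$. The degree increases by $\degree+\order{1}$. Contracting $\structure$--$\structure$ edges as in \Cref{rem: contracting edges} does not increase the degree sum beyond $\polyfd$. Together these give complexity $(\order[\format]{1},\polyfd)$.

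The main obstacle is making the replacement of $\log$ by a restricted $\log$ legitimate within the complexity filtration. We need the restricted logarithm on an interval of length $e^{\order{1}}$ to lie in $\FD[\order{1}][\order{1}]$ uniformly. We also need the normalizing constant $c$ to be absorbable without affecting the degree. I would check this via the sharp generation of $\structure$ from $\restrict{\exp}{[0,1]}$ by finitely many compositions: $\log$ on $[1,e^M]$ is the inverse of $\exp$ on $[0,M]$, which is a composition of $\order{M}$ rescaled copies of $\restrict{\exp}{[0,1]}$.
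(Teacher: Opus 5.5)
Your proposal is correct and follows the paper's proof. Both arguments split $\log\abs{y_j}=\log\abs{y_j/\psi}+\log\abs{\psi}$ and replace the first logarithm, up to an additive constant, by $\restrict{\log}{[1,\order{1}]}\in\FD[\order{1}][\order{1}]$. This gives $y_{j+1}\in\LA_{n+1,j}$, and propagating up the scale gives $y_\ell\in\LA_{n+1,\ell-1}$; your extra bookkeeping on the normalizing constant and the restricted logarithm just spells out what the paper asserts in one line.
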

\begin{proof}
	Write
	\begin{equation}
		\label{eq: log depth drop}
		y_{j+1}+\theta_{j+1}=\log\abs{y_j}=\log\abs{\frac{y_j}{\psi}}+\log\abs{\psi}.
	\end{equation}
	Up to an additive constant, the first logarithm on the right-hand side of \eqref{eq: log depth drop} may be taken to be $\restrict{\log}{[1,\order{1}]}\in\FD[\order{1}][\order{1}]$.
	Thus $y_{j+1}\in\LA_{n+1,j}$, which implies $y_{j+2}\in\LA_{n+1,j+1}$ and so on up to $y_\ell\in\LA_{n+1,\ell-1}$.
\end{proof}

The proof of \Cref{thm: LS preparation} will occupy the rest of this section.
We proceed by induction on the logarithmic depth $\ell$ (see \Cref{def: LA function}) of the functions $f_1,\dots,f_k$ in the statement of the theorem.

\subsubsection{\texorpdfstring{The case $\ell=0$}{The case l=0}}
\label{sec: ell eq 0}

Let $f_1,\dots,f_k\in\LA_{n+1,0}$ be of complexity $(\format,\degree)$.
By contracting edges, we may assume that
\begin{equation}
	f_i(x,y)=F_i(\varphi_i (x),y),
\end{equation}
where $F_i\in\FD[\format][\degree]$ and the components of $\varphi_i=(\varphi_{i,1},\dots,\varphi_{i,m_i})\in \LE_n^{m_i}$ are of complexity $(\format,\degree)$.
We may replace without loss of generality $m_i$ by $m=\max_i m_i$.
Consider $\rReal^{m+1}$ with coordinates $(z,w)=(z_1,\dots,z_m,w)$ and the maps $T_i:\rReal^{n+1}\to\rReal^{m+1}$ given by
\begin{equation}
	T_i(x,y)=(\varphi_i(x),y).
\end{equation}
Also write $T'_i$ for $\varphi_i:\rReal^{n}\to\rReal^m$.
The functions $f_i$ factor as $F_i\comp T_i$.

We apply $\structure$-preparation (\Cref{thm: S-preparation}) to $\rReal^{m+1}$ and the functions $F_1,\dots,F_k$, obtaining a decomposition of $\rReal^{m+1}$ into $\polyfd[\format,n][\degree,k,\mu^{-1}]$ cells of suitable complexity, over which each of the functions $F_i$ is $\mu/2$-prepared in the sense of \Cref{def: mu prepared}.
We may also assume that these cells are compatible with $\{w=0\}$.

As in the proof of \Cref{thm: exp split} in \Cref{sec: LE preparation proof}, we proceed in two stages.
First, in \Cref{sec: ell 0 pullback of a cell}, we decompose $\rReal^{n+1}$ using a collection of $\LE$-cylinders, each of them mapped by each of the maps $T_i$ into one of the cells covering $\rReal^{m+1}$.
Then, restricting our attention to one of these cylinders, we will further refine it in \Cref{sec: ell 0 pulling back expansions} by $\LE$-cylinders on which each $f_i$ is of the desired form \eqref{eq: LS prepared}.

\subsubsection{Pullback of a cell}
\label{sec: ell 0 pullback of a cell}

Let $\mathcal{D}$ denote the collection of cells covering $\rReal^{m+1}$, obtained by the application of $\structure$-preparation to the functions $\{F_i\}$.
For each cell $D\in\mathcal{D}$, write $D'$ for its base.
Let $\chi_{D'}$ be the indicator function of $D'$.
The cell $D$ is determined over $D'$ by the sign of up to two functions of the form $w-\alpha(z)$, where $\alpha:D'\to\rReal$ is in $\FD$.
We may assume $\alpha$ is identically $0$ outside of $D'$.
We note that the functions $\chi_{D'}$ and $\alpha$ are in $\FD*[\format][\degree]$.

The pullback of each of the cells $D\in\mathcal{D}$ by each of the maps $T_i$ is thus the $\LE$-cylinder with base $\{\chi_{D'}\comp T'_i>0\}$ and walls determined by the $\LE$-functions $\alpha\comp T'_i$.
We may take a common refinement of these cylinders as in the proof of \Cref{thm: s exp sharp}, applying \Cref{thm: LE cylinder decomposition} by induction on $n$.
(One may also form a common refinement by repeated application of \Cref{rem: refine two covers}, however the number of cylinders obtained in this way is too large).
Let $C$ be one of the resulting cylinders.
For each $i$ we have $T_i(C)\subset D_i$, for some $D_i\in\mathcal{D}$.

We may assume $C$ is compatible with $\{y=0\}$.
If $C$ is a graph over its base (in particular, if $C\subset\{y=0\}$), we are done, since in this case the restrictions of the functions $f_i$ to $C$ depend only on $x$.
We assume from now on that this is not the case.

\subsubsection{Pulling back the expansions}
\label{sec: ell 0 pulling back expansions}
Pulling back the expansions \eqref{eq: mu prepared} of the functions $F_i$ on the cells $D_i$ by the maps $T_i$, we obtain corresponding expansions of the functions $f_i$ on the $\LE$-cylinder $C$:
\begin{equation}
	\label{eq: different centers}
	f_i(x,y)=\abs{y-\theta_{i}(\varphi_i(x))}^{\lambda_{i}}\cdot a_i(\varphi_i(x))\cdot u_i(\varphi_i(x),y),
\end{equation}
where $\theta_i,a_i,u_i\in\FD*[\format][\degree]$ and $u_i(\varphi_i(x),y)$ is positive and of logarithmic width at most $\mu/2$ on $C$.
We also have that each exponent $\lambda_i\in\qRationals$ is of height at most~$\polyfd$.

We will say that a function expanded as in the right-hand side of \eqref{eq: different centers} is \emph{monomialized} with respect to $\abs{y-\theta_i\comp\varphi_i}$.
In this subsection, we explain how to move from these expansions to ones that are monomialized with respect to the same expression for all $i$, and thus obtain the desired expansions \eqref{eq: LS prepared} for the functions~$f_i$.

Denote $\overline{\theta}_i=\theta_i\comp\varphi_i$.
We consider the following ratios, for $i\neq j$.
\begin{equation}
	\label{eq: center ratio}
	\rho_{i,j}=\frac{y-\overline\theta_i}{\overline\theta_i-\overline\theta_j}.	
\end{equation}
We note that $1+\rho_{i,j}=-\rho_{j,i}$.
As we explain more precisely below, whenever $\abs{\rho_{i,j}}$ is large, we have that the centers $\overline\theta_i$ and $\overline\theta_j$ are significantly closer to each other than $y$ is to either one of them.
In this case we may exchange the expressions $\abs{y-\overline\theta_i}$ and $\abs{y-\overline\theta_j}$, up to slightly increasing the logarithmic width of the unit in \eqref{eq: different centers}.
Similarly, whenever $\abs{\rho_{i,j}}$ is of bounded logarithmic width, we may replace $\abs{y-\overline\theta_i}$ by $\abs{\overline\theta_i-\overline\theta_j}$, which does not depend on $y$.

We now explain this in more detail.
Let $c_1,\ldots,c_N$ be an increasing sequence of positive numbers.
We note that
\begin{equation}
	\label{eq: bounded ratio}
	c_k\leq \abs{\frac{y-\overline\theta_i}{\overline\theta_i-\overline\theta_j}}\leq c_{k+1}
\end{equation}
if and only if one of the following holds, depending on the sign of $y-\overline\theta_i$:
\begin{align}
	\label{eq: ratio bounds 1}
	c_k\cdot\abs{\overline\theta_i-\overline\theta_j}+\overline\theta_i&\leq y\leq c_{k+1}\cdot\abs{\overline\theta_i-\overline\theta_j}+\overline\theta_i
	\intertext{or}
	\label{eq: ratio bounds 2}
	-c_{k+1}\cdot\abs{\overline\theta_i-\overline\theta_j}+\overline\theta_i&\leq y\leq -c_{k}\cdot\abs{\overline\theta_i-\overline\theta_j}+\overline\theta_i.
\end{align}

As in the proof of \Cref{thm: s exp sharp}, we apply \Cref{thm: LE cylinder decomposition} to all pairwise differences of the following functions in $\LE_n$: the walls of $C$, the functions~$\overline\theta_i$, and the functions appearing in either the left-hand or right-hand sides of \eqref{eq: ratio bounds 1} and~\eqref{eq: ratio bounds 2}.

Intersecting the base of $C$ with one of the resulting $\LE$-cylinders in $\rReal^n$, we may assume, after omitting duplicates among the functions $\overline\theta_i$, that $\overline\theta_i\neq \overline\theta_j$ over $C$ and so the ratios $\rho_{i,j}$ are well defined for all pairs $(i,j)$.
Furthermore, by splitting $C$ along its fiber, we may assume that, for each pair $(i,j)$, either $\abs{\rho_{i,j}}\leq c_1$, or $\abs{\rho_{i,j}}\geq c_N$, or else there exists some $k$ such that $c_k\leq \abs{\rho_{i,j}}\leq c_{k+1}$ throughout $C$.

Now take the sequence $\{c_i\}$ to be a geometric progression 
\begin{equation}
	q^{-N},\dots,1,\dots,q^N,
\end{equation}
with $\log q=\mu\cdot\polyfd^{-1}$ and $N=\polyfd[\format][\degree,\mu^{-1}]$.
By the discussion above, we may thus assume that for each pair $(i,j)$ at least one of the functions $\abs{\rho_{i,j}}$, $\abs{1+\rho_{i,j}}=\abs{\rho_{j,i}}$, or $\vert{1+\rho_{i,j}^{-1}}\vert$ has logarithmic width at most $\mu\cdot\polyfd^{-1}$ on~$C$.
We proceed to treat each of these cases separately.

If $\abs{\rho_{i,j}}$ has logarithmic width at most $\mu\cdot \polyfd^{-1}$ over $C$, we write
\begin{equation}
	\label{eq: new center mid}
	y-\overline\theta_i=(\overline\theta_i-\overline\theta_j)\cdot\frac{y-\overline\theta_i}{\overline\theta_i-\overline\theta_j}.
\end{equation}
Substituting the right-hand side of \eqref{eq: new center mid} into \eqref{eq: different centers} for $f_i$ yields the expansion
\begin{equation}
	f_i(x,y)=\abs{\overline\theta_i-\overline\theta_j}^{\lambda_{i}}\cdot a_i(\varphi_i(x)) \cdot\abs{\rho_{i,j}}^{\lambda_{i}} u_i(\varphi_i(x),y).
\end{equation}
Since $\abs{\lambda_{i}}=\polyfd$, we have that $\abs{\rho_{i,j}}^{\lambda_{i}} u_i(\varphi_i(x),y)$ is of logarithmic width at most $\mu$.

We perform such a substitution for all indices $i$ such that there exists an index~$j$ for which this case holds.
Restricting attention to the remaining pairs of indices, we may assume from now on that this condition does not hold for any pair $(i,j)$.
In particular, since $\abs{1+\rho_{i,j}}=\abs{\rho_{j,i}}$, it remains to consider pairs $(i,j)$ where $\vert{1+\rho_{i,j}^{-1}}\vert$ is of logarithmic width at most $\mu\cdot \polyfd^{-1}$ over~$C$.

In this case, we write
\begin{equation}
	\label{eq: new center large}
	y-\overline\theta_j=(y-\overline\theta_i)\left(1+\frac{\overline\theta_i-\overline\theta_j}{y-\overline\theta_i}\right).
\end{equation}
Fixing some index $i=i_0$, we substitute the right-hand side of \eqref{eq: new center large} into the expansion \eqref{eq: different centers} of all remaining $f_j$, monomializing them with respect to $\abs{y-\overline\theta_{i_0}}$. 
As before, the resulting units ${\vert{1+\rho_{{i_0},j}^{-1}}\vert}^{\lambda_j}u_j(\varphi_j(x),y)$ are of logarithmic width at most $\mu$.
We thus obtain the desired expansions \eqref{eq: LS prepared} for the functions~$f_i$.

\subsubsection{\texorpdfstring{The case $\ell>0$}{The case l>0}}
Let $f_1,\dots,f_k\in\LA_{n+1,\ell}$ be of complexity $(\format,\degree)$, for $\ell>0$.
We will inductively apply \Cref{thm: LS preparation} to bring the functions $f_i$ to the form \eqref{eq: ell positive starting form} below.
We first illustrate this in a simple example.
\begin{example}
	\label{ex: ell positive example}
	Let $f\in\LA_{n+1,\ell-1}$ and consider $\log f\in\LA_{n+1,\ell}$.
	Applying \Cref{thm: LS preparation} inductively to $f$, we pass to an $\LE$-cylinder $C$ equipped with a log-scale $y_0,\dots,y_{\ell-1}$, determined by centers $\theta_0,\dots,\theta_{\ell-1}$.
	On this cylinder we have
	\begin{equation}
		\label{eq: example inductive expansion of f}
		f(x,y)=\abs{y_0}^{\lambda_0}\cdots\abs{y_{\ell-1}}^{\lambda_{\ell-1}}\cdot a(x)\cdot u(\varphi(x),y_0,\dots,y_{\ell-1}),
	\end{equation}
	where $a\in\LE_n,u\in\structure$ and where $u(\varphi(x),y_0,\dots,y_{\ell-1})$ is positive and of bounded logarithmic width.
	We assume for simplicity that $a>0$ on $C$.

	Taking logarithms on both sides of \eqref{eq: example inductive expansion of f}, we get
	\begin{equation}
		\log f = \sum_{i=0}^{\ell-1}\lambda_{i} \log\abs{y_i}+\log a+\log u (\varphi,y_0,\dots,y_{\ell-1}).
	\end{equation}
	Writing $\log\abs{y_i}=y_{i+1}+\theta_{i+1}$ for $i=0,\dots,\ell-2$ and noting $\log u\in \structure$, we have
	\begin{equation}
		\log f = F(\widetilde\varphi, y_0,\dots, y_{\ell-1},\log\abs{y_{\ell-1}}),
	\end{equation}
	where $F\in\structure$ and $\widetilde\varphi$ is the sequence of $\LE_n$-functions composed of the elements of $\varphi$, of $\log a$, and of the centers $\theta_0,\dots,\theta_{\ell-1}$.
	By \Cref{lem: split y coordinate}, we may pass to an $\LE$-cylinder $C$ on which $\sign(\abs{y_{\ell-1}}-1)$ is constant.
	If $\abs{y_{\ell-1}}\equiv 1$, then $\log f\in\LA_{n+1,\ell-1}$.
	Otherwise, writing $y_\ell=\log\abs{y_{\ell-1}}$, we obtain a $\log$-scale $y_{0},\dots,y_\ell$ on $C$.
\end{example}

We now proceed with the proof.
Returning to our given functions $f_1,\dots,f_k$ and repeating the arguments of \Cref{ex: ell positive example} above for all subtrees of logarithmic depth at most ${\ell-1}$, we obtain a decomposition of $\rReal^{n+1}$ into $\polyfd[\format,n,\ell][\degree,k]$-many $\LE$-cylinders of complexity $(\order[\format,n,\ell]{1},\polyfd[\format,n,\ell][\degree])$.
Let $C$ be one of these cylinders and assume that at least one of the $f_i$ is of logarithmic depth at least $\ell$ on $C$ (otherwise we finish by induction on $\ell$).

As in \Cref{ex: ell positive example}, there is a $\log$-scale $y_0,\dots,y_\ell$ on $C$, determined by centers $\theta_0,\dots,\theta_\ell$, and we have
\begin{equation}
	\label{eq: ell positive starting form}
	f_i=F_i(\varphi_i(x),y_\ell,\dots,y_0),
\end{equation}
where $F_i\in\FD*[\format][\degree]$ and $\varphi_i=(\varphi_{i,1},\dots,\varphi_{i,m})\in\LE_{n}^m$ for $m=\order[\format]{1}$.
The terms of this log-scale, their associated centers, and the components of $\varphi_i$ are all of complexity $(\order[\format,n,\ell]{1},\polyfd[\format,n,\ell][\degree])$.
Note that we order the terms of the $\log$-scale in the entries of $F_i$ such that $y_0$ appears last.

Consider the space $\rReal^{m+\ell+1}$ with coordinates $({z},{w})=(z_1,\dots,z_m,w_\ell,\dots,w_0)$.
Write~${w}'=(w_\ell,\dots,w_{1})$.
Let the maps $T_i:\rReal^{n+1}\to\rReal^{m+\ell+1}$ and $T_i':\rReal^{n+1}\to\rReal^{m+\ell}$ be given by
\begin{equation}
		\ \quad T_i({x},y)=(\varphi_i(x),y_\ell,\dots,y_0),\ \quad T'_i({x},y)=(\varphi_i(x),y_\ell,\dots,y_1).
\end{equation}
The functions $f_i$ factor as $F_i\comp T_i$.
Apply $\structure$-preparation (\Cref{thm: S-preparation}) to~$\rReal^{m+\ell+1}$ and the functions $F_i$.
Let $\mathcal{D}$ be the resulting cellular decomposition of~$\rReal^{m+\ell+1}$.

As in the proof of \Cref{thm: exp split} in \Cref{sec: LE preparation proof} and in the case $\ell=0$ considered in \Cref{sec: ell eq 0}, we proceed in two stages.
In \Cref{sec: ell positive pullback of a cell}, we decompose $\rReal^{n+1}$ using a collection of $\LE$-cylinders, such that each of these cylinders is mapped by each of the maps $T_i$ into one of the cells in $\mathcal{D}$.
Assuming this holds, and restricting our attention to one of these cylinders, we will further refine it in \Cref{sec: ell positive expansion pull back} by $\LE$-cylinders on which each $f_i$ is of the desired form \eqref{eq: LS prepared}.

\subsubsection{Pulling back the expansions}
\label{sec: ell positive expansion pull back}
Continuing with the notation of the previous section, we assume for now that each of the functions $T_i$ maps the cylinder $C$ into one of the cells $D_i\in\mathcal{D}$, which may be different for each $i$.
We reduce to this case in \Cref{sec: ell positive pullback of a cell} below.

On each $D_i$ we have an expansion of the form
\begin{equation}
	\label{eq: ell positive S expansion}
	F_i({z},{w})=\abs{w_0-\theta_i({z},{w}')}^{\lambda_i} \cdot a_i ({z},{w}')\cdot u_i({z},{w}),
\end{equation}
where $\lambda_i\in\qRationals$ are of height at most $\polyfd$, the functions $\theta_i,a_i,u_i$ are in $\FD*[\format][\degree]$, and $u_i$ is a positive function of logarithmic width at most $\mu/2$.
We may assume $\mu=\order{1}$.

We first treat the case where the functions $\theta_1,\dots,\theta_k$ are all identically $0$.
In this case, we pull back the expansions \eqref{eq: ell positive S expansion} by the maps $T_i$ to obtain
\begin{equation}
	\label{eq: zero centers ell positive}
	f_i(x,y) = \abs{y_0}^{\lambda_i}\cdot a_i (\varphi_i(x),y_\ell,\dots,y_1)\cdot u_i(\varphi_i(x),y_\ell,\dots,y_0)
\end{equation}
on $C$.

We consider the functions $a_i (\varphi_i(x),y_\ell,\dots,y_1)$, as well as the functions $y_1,\dots,y_\ell$, as $\LA$-functions with respect to the coordinates $(x,y_1)$.
With respect to these coordinates, these maps are all in $\LA_{n+1,\ell-1}$.
More precisely, these maps all factor as the composition of the map $\kappa$ in \Cref{lem: kappa} and of functions in $\LA_{n+1,\ell-1}$.

We may thus apply \Cref{thm: LS preparation} inductively to these functions to obtain a decomposition of $\rReal^{n+1}$ into $\LE$-cylinders, on each of which these functions are $\mu/2$-prepared (in the sense of \Cref{def: LA mu prepared}) with respect to a suitable log-scale $\overline y_1,\dots,\overline y_\ell$.
By \Cref{lem: kappa}, the resulting cylinders are also $\LE$-cylinders with respect to the original coordinates $(x,y)$.
Replacing $C$ by its intersection with one of these cylinders, it is straightforward to verify that $y_0,\overline y_1,\dots,\overline y_\ell$ is a log-scale.
Finally, substituting the resulting expansions of the functions $a_i (\varphi_i(x),y_\ell,\dots,y_1)$ and $y_1,\dots,y_\ell$ in \eqref{eq: zero centers ell positive}, we obtain the desired expansions~\eqref{eq: LS prepared} for the functions $f_i$.

We now treat the case where at least one of the centers $\theta_i$ is not identically~$0$.
Let $1<M<\polyfd$ be some large integer whose value we will choose later.
Let $m\in\{-M,-M+1,\dots,M-1,M\}$.
Applying \Cref{lem: split y coordinate} to each pair of constant functions $\{m,m+1\}$ and to each $y_j$, we may assume one of the following two cases holds uniformly on $C$ --- either $\abs{y_j}>M>1$ for $j=1,\dots,\ell$, or else, for some such $j$, we have $m\leq y_j \leq m+1$ for some $m$ as above.

If $m \leq y_j \leq m+1$ on $C$, then $\exp(m)\exp(\theta_j) \leq \abs{y_{j-1}} \leq \exp(m+1)\exp(\theta_j)$.
Thus, by \Cref{lem: LS complexity drop}, the functions $y_1,\dots,y_\ell$, and hence also $f_1,\dots,f_k$, are in $\LA_{n+1,\ell-1}$.
We may thus inductively apply \Cref{thm: LS preparation} to $f_1,\dots,f_k$ in this case.

It remains to consider the case where $\abs{y_j}>M>1$ for $j=1,\dots,\ell$.
The assumption that $T_i(C)\subset D_i$ and the center condition \eqref{eq: center condition} imply that
\begin{equation}
	\label{eq: Mboundstart}
	\frac{2}{3}<\frac{y_0}{\theta_i(\varphi_i(x),y_\ell,\dots,y_1)}<2.
\end{equation}
Considering $\theta_i(\varphi_i(x),y_\ell,\dots,y_1)$ with respect to the variables $(x,y_1)$, applying \Cref{thm: LS preparation} inductively, and using \Cref{lem: kappa} as above, we reduce to the case where
\begin{equation}
	\label{eq: y0 and prepared denominator}
	\frac{1}{3}<\frac{y_0}{\abs{y_1}^{\alpha_1}\cdots\abs{y_\ell}^{\alpha_\ell}\cdot A(x)}<3,
\end{equation}
where $\alpha_1,\dots,\alpha_\ell\in\qRationals$ are of height at most $\polyfd$ and $A\in\LE_n$ is of complexity $(\order[\format,n,\ell]{1},\polyfd[\format,n,\ell][\degree])$.
Taking absolute values and logarithms, subtracting $\theta_1$, and rearranging, we have
\begin{equation}
	\label{eq: two bounds for log A minus theta}
	\begin{gathered}
		\log \abs{A}-\theta_1>y_1-(\log 3+\sum_{j=1}^{\ell} \alpha_j \log\abs{y_j}),\\
		\log \abs{A}-\theta_1<y_1-(\log \frac{1}{3}+\sum_{j=1}^{\ell} \alpha_j \log\abs{y_j}).
	\end{gathered}
\end{equation}

Since $M>1$, the center condition \eqref{eq: log scale center condition} implies that $\abs{y_{j}}<\log\abs{y_{j-1}}$ and so $\abs{y_{j}}<\abs{y_{j-1}}$ for $j=2,\dots,\ell$.
Together with the bound $\abs{\alpha_j}<\polyfd$, this implies
\begin{equation}
	\label{eq: iterated log bound}
	\log 3+\sum_{j=1}^{\ell} \abs{\alpha_j} \abs{\log\abs{y_j}}<\polyfd\log\abs{y_1}.
\end{equation}
Thus, for $M>\polyfd$ we have that the right-hand side of \eqref{eq: iterated log bound} is of size at most $\abs{y_1}/\polyfd$.
Substituting into \eqref{eq: two bounds for log A minus theta}, we get that $\log\abs{A}-\theta_1$ and $y_1$ have the same sign on $C$. 
Dividing through by $y_1$ in \eqref{eq: two bounds for log A minus theta}, we get
\begin{equation}
	\label{eq: y1 equivalent to LEn}
	1-\frac{1}{\polyfd}<\abs{\frac{\log \abs{A} -\theta_1}{y_1}}<1+\frac{1}{\polyfd}.
\end{equation}

If $\ell>1$, by \eqref{eq: y1 equivalent to LEn} we may apply \Cref{lem: LS complexity drop} to $y_1$ and $\log\abs{A}-\theta_1$ as before and then finish by applying \Cref{thm: LS preparation} inductively to $f_1,\dots,f_k$.
If $\ell=1$, then the bound $\abs{\alpha_1}<\polyfd$ and \eqref{eq: y0 and prepared denominator}, \eqref{eq: y1 equivalent to LEn} imply that $\abs{y_0/\psi(x)}$ has logarithmic width bounded by $\order{1}$, for a suitable $\psi\in\LE_n$ of complexity $(\order[\format,n,\ell]{1},\polyfd[\format,n,\ell][\degree])$.
Hence we may apply \Cref{lem: LS complexity drop} in this case as well.

\subsubsection{Pullback of a cell}
\label{sec: ell positive pullback of a cell}

Continuing with the notation of the previous sections, it remains to reduce to the case where the cylinder $C$ is mapped by each of the maps $T_i$ into some cell $D_i\in\mathcal{D}$.
Denote by $D'$ the base cell of the cell $D\in\mathcal{D}$.
Let $\chi_{D'}$ be its indicator function.
Considering all functions $\chi_{D'}(\varphi_i(x),y_\ell,\dots,y_1)$ (for all choices of $D$ and of $i$) as elements of $\LA_{n+1,\ell-1}$ with respect to the variables $(x,y_1)$, applying \Cref{thm: LS preparation} inductively, and using \Cref{lem: kappa} as in \Cref{sec: ell positive expansion pull back} above, we may assume that for each $i$ we have $T_i(C)\subset D_i'\times \rReal$, for some $D_i\in\mathcal{D}$.

The cell $D_i$ is determined over $D'_i$ by the sign of at most two functions of the form $w_0-\alpha(z,w'$), where $\alpha:D_i'\to\rReal$ is a continuous function in $\FD*[\format][\degree]$ which may be taken to have constant sign on $D_i'$ (otherwise we split $D'_i$ according to the sign of $\alpha$).
We may assume $\alpha$ vanishes outside $D_i'$.
It remains to reduce to the case where the functions
\begin{equation}
	g_{i,\alpha}=y_0-\alpha(\varphi_i,y_\ell,\dots,y_1)
\end{equation}
have constant sign on $C$.
If $\alpha\equiv 0$, then the sign of $g_{i,\alpha}=y_0$ is already constant on $C$.
We assume from now on that $\alpha>0$ on $D_i'$, the other case being similar.

Since $\alpha$ is continuous on $D_i'$, the restriction of $g_{i,\alpha}$ to any fiber ${C\cap(\{x\}\times \rReal)}$ is also continuous.
In particular, this restriction of $g_{i,\alpha}$ may only change sign if it vanishes at some point along the fiber.
Whenever $g_{i,\alpha}=0$, we have $y_0=\alpha(\varphi_i,y_\ell,\dots,y_1)$ and, in particular, 
\begin{equation}
	\label{eq: alpha interval}
	\frac{1}{2}<\frac{y_0}{\alpha\comp T'_i}<2.
\end{equation}
Let $\widetilde{C}_{i,\alpha}\subset C$ be the set where \eqref{eq: alpha interval} holds.
By continuity, the intersection of $\widetilde{C}_{i,\alpha}$ with each of the fibers ${C\cap(\{x\}\times \rReal)}$ is a relatively open subset of this fiber.
Using \Cref{lem: kappa,lem: LS complexity drop} as in the argument surrounding \eqref{eq: Mboundstart}--\eqref{eq: y1 equivalent to LEn} in \Cref{sec: ell positive expansion pull back}, we reduce to the case where $g_{i,\alpha}=h_{i,\alpha}$ on $\widetilde{C}_{i,\alpha}$, where $h_{i,\alpha}\in\LA_{n+1,\ell-1}$ has complexity $(\order[\format,n,\ell]{1},\polyfd[\format,n,\ell][\degree])$.

Applying \Cref{thm: LS preparation} inductively to the functions $h_{i,\alpha}$, we may assume that their sign is constant on $C$.
Fix $x\in\rReal^n$ and assume that $g_{i,\alpha}$ changes sign on $C\cap(\{x\}\times\rReal)$.
Continuity of the restriction of $g_{i,\alpha}$ to this fiber implies that there exist $(x,t_1), (x,t_2)\in{C}$, with $0<\abs{t_1- t_2}$ arbitrarily small, such that
\begin{equation}
	g_{i,\alpha}(x,t_1)=0\neq g_{i,\alpha}(x,t_2).
\end{equation}
In particular, $(x,t_1)\in \widetilde{C}_{i,\alpha}$.
Choosing $t_2$ close enough to $t_1$, we may assume that also $(x,t_2)\in \widetilde{C}_{i,\alpha}$.
But then we have
\begin{equation}
	h_{i,\alpha}(x,t_1)=0\neq h_{i,\alpha}(x,t_2),
\end{equation}
contradicting the fact that $h_{i,\alpha}$ has constant sign.
This shows that the sign of~$g_{i,\alpha}$ is constant on each fiber $C\cap(\{x\}\times \rReal)$.

We finish by splitting the base $C'$ of $C$ such that $g_{i,\alpha}$ will have constant sign on each resulting $\LE$-cylinder.
For example, if $C=(\psi_1,\psi_2)_{C'}$ for $\psi_1,\psi_2\in\LE_n$, then we apply \Cref{thm: LE cylinder decomposition}, by induction on $n$, to all functions of the form
\begin{equation}
	g_{i,\alpha}\left(x,\frac{\psi_1(x)+\psi_2(x)}{2}\right)\in\LE_n
\end{equation}
and replace $C'$ by its intersection with one of the elements of the resulting decomposition of $\rReal^n$.
This finishes the proof of \Cref{thm: LS preparation}.\qed

\section{Complex cells}
\label{sec: complex cells}

We recall the following from the proofs of the $\LE$-preparation and $\LA$-preparation theorems (\Cref{thm: exp split,thm: LS preparation}) in \Cref{sec: LE preparation proof,sec: LS preparation proof}.
We considered a collection of functions $f_1,\dots,f_k$ defined on some $\LE$-cylinder $C$, along with corresponding maps $T_i$ from $C$ to an auxiliary space $\rReal^N$.
This space was covered by a collection $\{D_j\}$ of o-minimal cells, definable in $\structure$ and obtained by an application of $\structure$-preparation.
We then reduced to the case where each map $T_i$ mapped the cylinder $C$ into one of the cells $D_i$.

The use of different maps $T_i$ for each function $f_i$ is needed in order to ensure the polynomial dependence on the number of functions $k$ in the statements of these theorems, which in turn is required in order to obtain polynomial dependence on the degrees of the given functions.
For the proof of the interpolation theorem (\Cref{thm:C}) and of Wilkie's conjecture (\Cref{thm: wilkie intro}), we will need a variant of this argument which uses a single map $T$ for all functions, and maps their domain into a single cell (see \Cref{thm: single cell}).
The relevant map~$T$ will be a \emph{recursive log--exp scale} as defined in \Cref{sec: recursive scale}.

In addition, we recall that the cellular covers obtained by $\structure$-preparation (as formulated in~\cite[Theorem 5.3]{CarmonAnalyticallyGenerated}) are given by the images of \emph{real complex cells} under \emph{real cellular maps}.
Each of these complex cells $\cell$ is contained in a holomorphic extension $\ext{\cell}{\delta}$, and the corresponding real cellular maps extend holomorphically to $\ext{\cell}{\delta}$.
The extension parameter $0<\delta<1$ controls the size of $\cell$ relative to the hyperbolic metric of $\ext{\cell}{\delta}$ --- smaller values of $\delta$ correspond to a larger extension, that is the cell $\cell$ is hyperbolically smaller inside the corresponding cell $\ext{\cell}{\delta}$.
The function $f\in\structure$ to be prepared is then given by the pullback, along the inverse of the corresponding real cellular map, of a holomorphic function defined on the extended cell $\ext{\cell}{\delta}$ (see \Cref{fig: factor through scale} in \Cref{sec: prep thms and complex cells intro}).
This holomorphic function may be taken to either vanish identically or else vanish nowhere on $\ext{\cell}{\delta}$.
We refer the reader to~\cite[Section 2.2 and Appendix A]{BinyaminiCarmonNovikovComplexCells} for a review of the definitions and notation that we need regarding complex cells, as well as to~\cite{BinyaminiNovikov2019} for more on complex cells.

In \Cref{sec: point counting}, we relate the algebraic points of bounded height and degree on an $\LE$-cell $X$ to points on a real complex cell $\cell$ on which several holomorphic functions attain algebraic values of bounded height and degree.
In order to construct a polynomial whose zero-set interpolates these points, it will be enough to construct a polynomial which is uniformly sufficiently small on $X$.

The classical version of the arguments we will use in \Cref{sec: algebraic point interpolation} to construct this polynomial depends delicately on the dimension of the set on which one counts algebraic points.
However, the complex cell $\cell$ might have dimension much larger than that of $X$.
In this section, we explain how to reduce to the case where at most $\dim X$ of the coordinates of $\cell$ are discs, while the rest are points, annuli, punctured discs, or disc complements.
One of the crucial features of the hyperbolic geometry of complex cells inside their extensions is that we may take the extension parameters in non-disc coordinates to be exponentially smaller than those in the disc coordinates.
This essentially means that non-disc coordinates, while contributing to the dimension of the complex cell, may be taken to be ``too  thin'' to affect the estimates we need.

We have the following lemma in this direction.
\begin{lemma}
	\label{lem: thin annuli}
	Let $0<\delta<\order{1}$ and let $t>\order*{1}$.
	Let $\boldsymbol{\delta}$ denote an extension parameter which is of size $\delta$ for disc coordinates and of size $\delta^t$ for all other coordinates.
	
	Let $\ext{\cell}{1/2}\subset\cComplex^\ell$ be a real complex cell in $\FD[\format][\degree]$.
	Then there is a real cellular cover $\{f_j:\ext{\cell_j}{\boldsymbol{\delta}}\to\ext{\cell}{1/2}\}$ of size $\poly_\ell(t,\delta^{-1})$, where each $f_j$ is in $\FD*[\format][\degree]$.
\end{lemma}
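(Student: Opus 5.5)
We will prove the lemma by induction on $\ell$, building the cover fiber by fiber. The basic ingredient is the refinement theorem for complex cells (the one-dimensional covering lemmas of~\cite{BinyaminiNovikov2019}, as extended to the sharply o-minimal setting in~\cite[Appendix A]{BinyaminiCarmonNovikovComplexCells}). It says that a fiber of type disc, punctured disc, annulus, or disc complement, sitting with extension $1/2$, can be covered by fibers of the same kind with an arbitrary smaller extension parameter $\sigma$. The number of pieces is $\order{\log(1/\sigma)}$ for annuli, punctured discs and disc complements, and $\poly(1/\sigma)$ for discs. Points need no refinement. The covering maps are affine (or M\"obius-type) maps $z\mapsto c+rz$ with radii and centers taken from the fiber data. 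They are therefore definable with format and degree controlled by those of $\cell$, and they are real whenever $\cell$ is real.

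We apply this with $\sigma=\delta$ for disc fibers and $\sigma=\delta^t$ for all other fibers. The non-disc fibers then need only $\order{\log(\delta^{-t})}=\order{t\log\delta^{-1}}$ pieces each. This is the crucial point: the exponentially thinner extension costs only a factor linear in $t$, so the total count stays $\poly_\ell(t,\delta^{-1})$.

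The plan is as follows.
\begin{enumerate}
\item Write $\cell=\cell_{1..\ell-1}\odot\mathcal{F}$ and apply the induction hypothesis to the base $\ext{\cell_{1..\ell-1}}{1/2}$. This gives a real cellular cover $\{g_i:\ext{\cell'_i}{\boldsymbol\delta'}\to\ext{\cell_{1..\ell-1}}{1/2}\}$ of size $\poly_{\ell}(t,\delta^{-1})$. Here $\boldsymbol\delta'$ is chosen slightly stronger than needed, for instance $\delta^{2}$ and $\delta^{2t}$; this keeps the count polynomial.
\item Pull the last fiber $\mathcal{F}$ back along each $g_i$ to get a cell $\cell'_i\odot g_i^*\mathcal{F}$ with extension $1/2$ in the last coordinate. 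The radii of this fiber are holomorphic functions on the larger base $\ext{\cell'_i}{\boldsymbol\delta'}$.
\item Cover the fiber $g_i^*\mathcal{F}$ uniformly over the base using the one-dimensional lemma with the appropriate $\sigma$. The fibered maps $(z',z_\ell)\mapsto(g_i(z'),c(z')+r(z')z_\ell)$ are cellular, and they are real since we pick real centers and radii. Composing preserves the $\FD*[\format][\degree]$ bounds, because each step adds $\order{1}$ format and polynomially many operations.
\end{enumerate}

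The main obstacle is step 3. The one-dimensional covering lemma must hold uniformly over a base whose radius functions vary. For annuli, covering $\{r_1(z')<|z_\ell|<r_2(z')\}$ by annuli of fixed modular ratio requires the number of pieces to be independent of $z'$. In addition, each piece's radii must be holomorphic with constant argument in the base.

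To handle this, I would first pass from $\ext{\cell'_i}{\boldsymbol\delta'}$ to $\ext{\cell'_i}{\boldsymbol\delta}$. This gives a hyperbolic margin in which, by the Schwarz--Pick-type lemma for cellular radii~\cite{BinyaminiNovikov2019}, the functions $\log r_j$ have oscillation $\order{\delta}$ on the smaller cell. The ratio $r_2/r_1$ is therefore essentially constant. We can then choose a subdivision of $\log(r_2/r_1)$ into $\order{t\log\delta^{-1}}$ pieces that is uniform in $z'$, defining sub-annuli with radii $r_1(z')\rho^k$. Discs are handled analogously with $\poly(1/\delta)$ pieces, using disc-covering of the $1/2$-extended disc by $\delta$-discs centered at real or symmetric points. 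If this uniform-oscillation argument proves delicate, I would fall back on citing the fibered refinement theorem of~\cite[Appendix A]{BinyaminiCarmonNovikovComplexCells} directly, with per-fiber parameters, and check only the counting estimate above.
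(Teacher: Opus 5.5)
Your construction rests on a one-dimensional covering statement that is false, and the failure is exactly where the content of the lemma lies.

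A non-disc fiber with extension $1/2$ cannot be covered by fibers of the same kind with extension $\sigma$ using $O(\log\sigma^{-1})$ pieces. If $A(\sigma a,\sigma^{-1}b)\subset A(r_1/2,2r_2)$, then $a\geq r_1/(2\sigma)$. So annuli around the hole never reach the collar $r_1<\abs{z}<r_1/(2\sigma)$, nor its mirror near $r_2$; punctured discs and disc complements behave the same way. Pieces that do not wind around the hole are, by the Laurent (Cauchy) estimates, no better than discs of relative size $\sigma$. With $\sigma=\delta^t$, covering by fibers of the same kind would therefore force on the order of $\delta^{-t}$ pieces, which is exponential in $t$.

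The lemma holds only because the new cells $\mathcal{C}_j$ may have disc fibers in positions where $\mathcal{C}$ had a non-disc fiber, and such discs need only extension $\delta$. One annulus $A(r_1\delta^{-t}/2,\,2r_2\delta^{t})$ (when it is nonempty), together with roughly $t\log\delta^{-1}/\delta$ real discs in the two collars, covers the real part of the fiber. The polynomial count survives, but not by the mechanism you describe, and the resulting cells mix fiber types.

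Step~3 also rests on a false premise. Radius functions need not have small logarithmic oscillation on a subcell, so $r_2/r_1$ is not essentially constant. For the cell $A(1,R)\odot A(1,z_1)$ the modulus $\log\abs{r_2/r_1}=\log\abs{z_1}$ fills $(0,\log R)$, with $R$ arbitrary. After refining the base, $r_2/r_1$ is still a monomial in the annular base coordinates times a unit, with unbounded oscillation. So there is no subdivision of $\log(r_2/r_1)$ into a number of pieces independent of $z'$. The middle annulus and the collar discs above are legitimate over some base points but not others. What is actually needed is to refine the base, with stronger parameters, so that on each base cell the modulus is either essentially constant or uniformly large. Doing this uniformly is what the refinement theorem handles, and your oscillation argument does not supply it.

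Your fallback points to the route the paper takes. Its proof is that of \cite[Lemma~94]{BinyaminiNovikov2019}. As the remark after the lemma indicates, this comes from the refinement theorem (\cite[Theorem~9]{BinyaminiNovikov2019}, in its sharp form) in the hyperbolic normalization. It is applied once, with a single parameter $\sigma\asymp\min\{\delta,\,1/(t\log\delta^{-1})\}$, giving $\poly_\ell(1/\sigma)=\poly_\ell(t,\delta^{-1})$ cells.

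No per-fiber parameters are needed, and the count to check is not the one you computed. What must be checked is the comparison of normalizations:
\begin{itemize}
\item a disc fiber with a $\{\sigma\}$-extension has a euclidean $O(\sigma)$-extension;
\item annuli, punctured discs and disc complements with a $\{\sigma\}$-extension have a euclidean $e^{-\Omega(1/\sigma)}$-extension, since the boundary circles of $A(r_1,r_2)$ have hyperbolic length $\Theta(1/\log\delta^{-1})$ inside $A(\delta r_1,\delta^{-1}r_2)$.
\end{itemize}
This is why the exponentially thinner extension in non-disc coordinates costs only a polynomial factor in $t$.
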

\begin{proof}
	The proof is the same as that of~\cite[Lemma 94]{BinyaminiNovikov2019}.
\end{proof}
The analogous statement for complex cells in $\Ran$ is~\cite[Lemma 94]{BinyaminiNovikov2019}.

\begin{remark}
	It is more natural to formulate \Cref{lem: thin annuli} in terms of \emph{hyperbolic $\hyperbolicParameter{\rho}$-extension parameters} rather than the euclidean extension parameters $\delta$, as in~\cite[Theorem 9]{BinyaminiNovikov2019}.
	However, for the sake of brevity and clarity, we will continue with the euclidean normalization, as in~\cite[Appendix B.1]{BinyaminiNovikov2019}.
\end{remark}

\subsection{Recursive log--exp scales}
\label{sec: recursive scale}
In this section, we extend the definitions of syntactic complexity and of logarithmic scales (\Cref{def: log scale,def: syntactic complexity}).
This will make it more convenient to work with all coordinates at once, rather than just with the last coordinate as in \Cref{sec: prepartion theorems}.

\begin{definition}
	\label{def: log exp scale}
	Let $C\subset\rReal^{n+1}$ (with coordinates $x_1,\dots,x_n,y$) be an $\LE$-cylinder compatible with $\{y=0\}$.
	If $C$ is not the graph of a function over its base, then a \emph{log--exp scale on~$C$} is a finite sequence of functions 
	\begin{equation}
		S_y=(L_\ell,\dots, L_0,\exp(E_1),\dots,\exp(E_m)),	
	\end{equation}
	where the \emph{logarithmic part} $(L_0,\dots, L_\ell)$ is a log--scale on $C$ (with respect to~$y$) and the \emph{exponential part} $(E_1,\dots,E_m)$ consists of $\LE$-functions of weakly increasing exponential level (with respect to $y$), i.e.\ $e(E_{i+1})\geq e(E_i)$ in the notation of \Cref{def: syntactic complexity}.
	The \emph{length} $\ell(S_y)$ of $S_y$ is $\ell+1+m$.
	Otherwise, if $C$ is a graph, then a log--exp scale on $C$ is an empty sequence of functions and has length~$0$.
\end{definition}
We note that we order the terms of a log--exp scale from ``most logarithmic'' to ``most exponential'', unlike in \Cref{def: log scale}.

\begin{definition}
	\label{def: recursive scale}
	Let $C\subset\rReal^n$ be an admissible $\LE$-cell (see \Cref{def:LE cell}).
	Denote its base by~$C'$.
	A \emph{recursive log--exp scale on $C$} is a tuple $S=(S_{x_1},\dots,S_{x_n})$, where $S'=(S_{x_1},\dots,S_{x_{n-1}})$ is a recursive log--exp scale on $C'$ and $S_{x_n}$ is a log--exp scale on $C$.
	We set the \emph{length} $\ell(S)$ of $S$ to be $\ell(S_{x_1})+\cdots+\ell(S_{x_n})$.
\end{definition}

In what follows, we will abuse notation and identify between a recursive log--exp scale $S=(S_{x_1},\dots, S_{x_n})$ on an $\LE$-cell $C$ and the function $S:C\to\rReal^{\ell(S)}$ obtained by concatenating the components of $S$.
For a tuple of functions $f=(f_1,\dots,f_m)$ and $\lambda\in\rReal^m$, we write $\abs{f}^\lambda$ for the product~$\prod_{i=1}^m \abs{f_i}^{\lambda_i}$.
\begin{definition}
	Let $C\subset\rReal^{n}$ be an admissible $\LE$-cell, let $f_1,\dots,f_k:C\to\rReal$ be $\LE$-functions, and let  $S=(S_{x_1},\dots,S_{x_n})$ be a recursive log--exp scale on~$C$.
	We say that $f_1,\dots,f_k$ are \emph{prepared} with respect to $S$ if each~$f_i$ is either identically $0$, or else of the form
	\begin{equation}
		\label{eq: recursively prepared}
		f_i=\pm \abs{S}^{\lambda_{i}} \cdot u_i(S),
	\end{equation}
	where $\lambda_{i}\in \qRationals^{\ell(S)}$ and the function $u_i\in\structure$ is a positive function bounded away from $0$ and infinity.
\end{definition}

Unlike in \Cref{sec: prepartion theorems} (and somewhat more in line with the treatment in \cite{vdDriesSpeissegger2002}), the preparation theorem of \Cref{sec: single cell} factors a collection of $\LE$-functions through a single $\structure$-definable o-minimal cell in an auxiliary space.
We will need the following definitions.
\begin{definition}
	Let $f_1,\dots,f_k\in\LE_n$ with respect to some given parse trees.
	Consider the parse tree with an $\structure$-node at the root, whose child nodes are the given parse trees of $f_1,\dots,f_k$.
	We may represent each $f_i$ with this tree by using the $i$-th coordinate function on $\rReal^k$ as the root.
	We will refer to this tree as the \emph{common tree} of $f_1,\dots,f_k$.
\end{definition}
We remark that a common tree as above has tree-format at least~$k$, and so we do not expect polynomial dependence on $k$ whenever using this representation of the functions $f_1,\dots,f_k$.
We will later only use common trees for the coordinate functions on some $\LE$-cell in $\rReal^n$, and so one may think of $k$ as being at most $n$ in what follows.
We also note that each $f_i$ in this representation has the same collection of maximal subtrees.

Finally, for the next two definitions, let $S=(S_{x_1},\dots,S_{x_n})$ be a recursive log--exp scale on an admissible $\LE$-cell $C\subset\rReal^n$.
\begin{definition}
	The \emph{recursive syntactic complexity} of $S$ is the tuple obtained by concatenating the following:
	\begin{enumerate}
		\item The syntactic complexity (with respect to $x_n$) of the common tree of the terms of $S_{x_n}$,
		\item The length of $S_{x_n}$, and
		\item The recursive syntactic complexity of $S'=(S_{x_1},\dots,S_{x_{n-1}})$
	\end{enumerate}
\end{definition}

\begin{definition}
	A \emph{maximal index} for $S$ is one of the numbers
	\begin{equation}
		1,\ell(S_{x_1})+1,\dots,\ell(S_{x_1})+\cdots+\ell(S_{x_{n-1}})+1.	
	\end{equation}
	These indices correspond to ``maximally logarithmic'' terms in $S$, with respect to each variable.
	In particular, the scale $S$ may have at most $\dim C$ maximal indices.
\end{definition}

The main result of this subsection is the following.
\begin{theorem}
	\label{thm: recursive preparation}
	Let $f_1,\dots,f_k\in\LE_{n}$.
	Then there is a finite decomposition of $\rReal^{n}$ into admissible $\LE$-cells, and for each $\LE$-cell $C$ there is a recursive log--exp scale $S:C\to\rReal^{N}$ such that $f_1,\dots,f_k$ are prepared with respect to $S$.

	If $f_1,\dots,f_k$ are of complexity $(\format,\degree)$, we may take the number of cells in this decomposition to be $\polyfd[\format,n,k][D]$, the $LE$-cells and the terms of the corresponding scales to be of complexity $(\order[\format,n,k]{1},\polyfd[\format,n,k][\degree])$, the length of each scale to be at most $\order[\format,n,k]{1}$, the exponents $\lambda_i$ in \eqref{eq: recursively prepared} to be of height at most $\polyfd[\format,n,k][\degree]$, and the unit $u_i$ in \eqref{eq: recursively prepared} to be in $\FD*[\format,n,k][\degree]$.
\end{theorem}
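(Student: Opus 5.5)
We argue by induction on $n$ and, for fixed $n$, by induction on the recursive syntactic complexity of the candidate scale, following the structure of the proofs of \Cref{thm: exp split,thm: LS preparation}. The difference is that now a single map is used for all functions. Since dependence on $k$ is allowed to be non-polynomial, we first replace $f_1,\dots,f_k$ by their common tree, together with the coordinate functions $x_1,\dots,x_n$. All the $f_i$ then share the same maximal subtrees, so one preparation step simultaneously affects all of them.

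\emph{Step 1 (peeling the last variable).} We apply \Cref{thm: exp split} with $\mu=\order{1}$ to the $f_i$ with respect to $y=x_n$. On each resulting cylinder this writes $f_i=g_i\exp(h_i)u_i$ with $g_i\in\LA$, $e(h_i)<e(f_i)$, and $u_i$ a unit.

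Rather than using \Cref{thm: exp split} as a black box, we rerun its inductive step. At each step we descend by replacing a maximal subtree $\exp(s_{q})$ in one of two ways:
\begin{itemize}
\item when the center $\theta\not\equiv 0$, by the restricted exponential \eqref{eq: restricted exp};
\item when the center vanishes, by recording $\exp(s_q)$ as a new term of the exponential part $(E_1,\dots,E_m)$ of $S_{x_n}$.
\end{itemize}
The recorded terms have weakly increasing exponential level because they are extracted in order of decreasing $\mathcal{LE}$ and $\mathcal{E}$. We keep only the monomial $\exp(\lambda s_q)$ and push the coefficient $a(r,\exp(s'))$ further into the recursion.

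When $e=0$ we apply \Cref{thm: LS preparation} to the resulting $\LA$-parts. This produces the log-scale part $(L_\ell,\dots,L_0)$, which we take to be common to all functions. The $\ell=0$ argument of \Cref{sec: ell 0 pulling back expansions} already monomializes everything with respect to a single center, and the $\ell>0$ case does the same recursively. In every case each $f_i$ becomes $\pm\abs{S_{x_n}}^{\lambda_i}\cdot a_i(x')\cdot u_i(\varphi(x'),S_{x_n})$, where $\varphi$ and $a_i$ are $\LE$-functions of $x'=(x_1,\dots,x_{n-1})$ and $u_i$ is a unit in $\structure$.

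\emph{Step 2 (recursion on the base).} We apply the theorem inductively in dimension $n-1$ to all base functions: the $a_i$, the components of $\varphi$, the centers $\theta_j$, the walls of the cylinder, and the base data of the functions $E_j$ and $L_j$ viewed as functions of $x'$. This gives admissible $\LE$-cells $C'$ with recursive scales $S'$ on which these functions are prepared. Using \Cref{thm: sharp cd}, we refine into admissible $\LE$-cells above each $C'$.

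Substituting $a_i=\pm\abs{S'}^{\beta_i}v_i(S')$ and $\varphi=\pm\abs{S'}^{\gamma}w(S')$ then gives $f_i=\pm\abs{S}^{\lambda_i}\tilde u_i(S)$ with $S=(S',S_{x_n})$. Here $\tilde u_i$ is a composition of $\structure$-functions and power maps, and it is bounded away from $0$ and $\infty$ because all the units involved are. Bounds on the number of cells, the complexities, the heights and the lengths follow from those in \Cref{thm: exp split,thm: LS preparation}. The length stays $\order[\format,n,k]{1}$ because each recorded term is a distinct subtree.

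\emph{Main obstacle.} The subtle point is that $\tilde u_i$ must be a function of $S$ alone, whereas the units from Step 1 depend on the auxiliary base functions $\varphi(x')$. We therefore need each component of $\varphi$ to be a monomial times a unit in $S'$, and those units to be compositions with $S'$ in the same sense. This is exactly what the inductive hypothesis provides, provided the $\varphi$ are included among the functions prepared at the lower level. A further difficulty is that the coefficient $a$ in the zero-center case may itself contain exponentials of lower level in $y$, which requires another pass of Step 1. Termination is by induction on the recursive syntactic complexity, as in \Cref{sec: LE preparation proof}. I would first check that the reinserted coefficients do not raise $\#\mathcal{E}$.
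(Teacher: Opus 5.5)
Your proposal is essentially the paper's proof. A common tree makes the $\LE$-preparation descent act uniformly on all $f_i$; maximal exponentials with zero center are recorded as the exponential part of $S_{x_n}$, while nonzero centers are absorbed by the restricted exponential; $\LA$-preparation supplies a common log-scale once no exponentials in $x_n$ remain; and induction on syntactic complexity and on $n$ handles the leftover functions. One detail to tighten in Step 1: besides the coefficient $a(r,\exp(s'))$, you must also feed the arguments $r$ and $\exp(s')$ of the unit $u(r,\exp(s))$ into the same-level recursion, as the paper does with $\varphi_{i,1},\dots,\varphi_{i,m}$, since these depend on $x_n$ and only once they are prepared does the unit become a function of $(\varphi(x'),S_{x_n})$, and ultimately of $S$.
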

\begin{proof}
	Throughout the proof, we consider the functions $f_1,\dots,f_k$ restricted to some $\LE$-cell $C$ (taking it to be all of $\rReal^{n}$ at the outset).
	On cells where some coordinate, say $x_j$, is a function of the previous coordinates, we substitute this function in place of $x_j$ in $f_1,\dots,f_k$ and proceed by induction on $n$.
	We may thus restrict our attention from now on to cells of dimension $n$, that is we may assume no coordinate of these cells is of graph type.

	If the functions $f_i$ are all in $\LA_{n}$, we may apply $\LA$-preparation (\Cref{thm: LS preparation}) to reduce to the case where they are given in the form
	\begin{equation}
		\label{eq: log scale partial preparation}
		f_i=\abs{S_{x_n}}^{\lambda_i} \cdot \varphi_{i,0} \cdot u_i(\varphi_{i,1},\dots,\varphi_{i,m},S_{x_n}),
	\end{equation}
	where $S_{x_n}$ is a log--scale (and so, up to reordering, a log--exp scale) of length $N=\order[\format]{1}$ on $C$ and $\lambda_i\in\qRationals^N$, the function $u_i\in\structure$ is positive and bounded away from $0$ and infinity, and $\varphi_{i,0},\dots,\varphi_{i,m}\in\LE_{n-1}$, for $m=\order[\format]{1}$.
	By induction on~$n$, we may reduce to the case where $\varphi_{1,0},\dots,\varphi_{k,m}$ are prepared with respect to a recursive log--exp scale $(S_{x_1},\dots,S_{x_{n-1}})$, and thus the functions~$f_i$ are prepared with respect to the recursive log--exp scale $(S_{x_1},\dots,S_{x_{n}})$.
	The relevant effective bounds follow directly from the construction.

	Otherwise, if one of the functions $f_i$ is not in $\LA_{n}$, we proceed by induction on syntactic complexity (with respect to $x_n$) of their common tree, as follows.
	We repeat the arguments of the proof of LE-preparation in \Cref{sec: LE preparation proof}, constructing maps $T_i\in \LE_n$ which map the $\LE$-cell $C$ into a cell $D_i\in\structure$.
	Using a common tree for $f_1,\dots,f_k$, the maps $T_i$ are the same for all $i$.
	Thus, after possibly passing to a smaller $\LE$-cell, we obtain expansions
	\begin{equation}
		\label{eq: log exp scale partial preparation}
		f_i=\exp(s_q)^{\lambda_i} \cdot \varphi_{i,0}\cdot u_i(\varphi_{i,1},\dots,\varphi_{i,m},\exp(s_q)),
	\end{equation}
	where $s_q,\varphi_{i,0},\dots,\varphi_{i,m}\in\LE_{n}$ and the maximal subtrees of $\varphi_{1,0},\dots,\varphi_{k,m}$ are subtrees of the common tree for $f_1,\dots,f_k$.
	The common tree of $\varphi_{1,0},\dots,\varphi_{k,m}$ has lower syntactic complexity (with respect to $x_n$) than that of the common tree of $f_1,\dots,f_k$ --- indeed, it either has lower exponential level or else fewer maximal subtrees (as it is missing $\exp(s_q)$).
	By induction, we may refine $C$ such that the functions $\varphi_{1,0},\dots,\varphi_{k,m}$ are prepared with respect to a recursive log--exp scale $S'$.
	Concatenating $S'$ with $\exp(s_q)$ yields a recursive log--exp scale $S$ on $C$ with respect to which the functions $f_1,\dots,f_k$ are prepared.
	The relevant effective bounds follow directly from the construction.
\end{proof}

By using different scales for different functions, it is possible to recover the polynomial dependence on the number of functions, as follows.
\begin{corollary}
	\label{cor: different scales poly k 1}
	Let $f_1,\dots,f_k\in\LE_{n}$.
	Then there is a finite decomposition of $\rReal^{n}$ into admissible $\LE$-cells, and for each $\LE$-cell $C$ there are recursive log--exp scales $S_i:C\to\rReal^{N}$ such that each $f_i$ is prepared with respect to $S_i$, respectively.

	If $f_1,\dots,f_k$ are of complexity $(\format,\degree)$, we may take the number of cells in this decomposition to be $\polyfd[\format][D,k]$, the $LE$-cells and the terms of the corresponding scales to be of complexity $(\order[\format]{1},\polyfd[\format][\degree])$, the length of each scale to be at most $\order[\format]{1}$, the exponents $\lambda_i$ in \eqref{eq: recursively prepared} to be of height at most $\polyfd[\format][\degree]$, and the unit $u_i$ in \eqref{eq: recursively prepared} to be in $\FD*[\format][\degree]$.
\end{corollary}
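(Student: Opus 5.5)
The plan is to apply \Cref{thm: recursive preparation} to each function $f_i$ separately, which gives the dependence on $k$ only through the number of cells, and then to merge the $k$ resulting decompositions into one. Concretely, for each $i=1,\dots,k$ we apply \Cref{thm: recursive preparation} to the single function $f_i$ (so the parameter ``$k$'' there equals $1$). This yields a decomposition $\mathcal{C}_i$ of $\rReal^n$ into $\polyfd[\format][\degree]$ admissible $\LE$-cells, with the following properties. On each cell $C\in\mathcal{C}_i$ we have a recursive log--exp scale $S_{i,C}$ of length $\order[\format]{1}$. Its terms have complexity $(\order[\format]{1},\polyfd[\format][\degree])$. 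With respect to it, $f_i$ is prepared with exponents of height $\polyfd[\format][\degree]$ and a unit in $\FD*[\format][\degree]$. None of these bounds depends on $k$, because the common tree now consists of $f_i$ alone.

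Next we take a common refinement of $\mathcal{C}_1,\dots,\mathcal{C}_k$ into admissible $\LE$-cells. A naive pairwise intersection, as in \Cref{rem: refine two covers}, would give $\polyfd^k$ pieces, so we do not use it. Instead we proceed as in the proof of \Cref{thm: s exp sharp,thm: sharp cd}. Each cell of $\mathcal{C}_i$ has complexity $(\order[\format]{1},\polyfd)$, so it lies in $\structure(\exp)_{\order[\format]{1},\polyfd}$ by \Cref{prop: complexity translation} and \Cref{lem: complexity to indicator}. The family $\bigcup_i\mathcal{C}_i$ consists of $\polyfd[\format][\degree,k]$ such sets. Sharp cell decomposition (\Cref{thm: sharp cd}) then gives a decomposition of $\rReal^n$ into $\polyfd[\format][\degree,k]$ admissible $\LE$-cells of complexity $(\order[\format]{1},\polyfd[\format][\degree])$, each compatible with every cell of every $\mathcal{C}_i$. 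Thus each new cell $\widetilde C$ is contained in a unique $C_i\in\mathcal{C}_i$ for every $i$.

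The main obstacle is checking that a recursive log--exp scale on $C_i$ restricts to a recursive log--exp scale on the smaller admissible cell $\widetilde C\subset C_i$, so that we may set $S_i=\restrict{S_{i,C_i}}{\widetilde C}$. Recursive scales are defined relative to the cylindrical structure of the cell, so this has to be verified rather than assumed. The conditions in \Cref{def: log scale} are constant nonzero signs and center inequalities, which are pointwise and therefore persist on subsets. Two points need care.

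First, $\widetilde C$ may be a graph in some coordinate $x_j$ while $C_i$ is not. By \Cref{def: log exp scale}, the $x_j$-component of the scale must then be empty. In that case we substitute the graph function for $x_j$, as at the start of the proof of \Cref{thm: recursive preparation}. The terms of $S_{i,x_j}$ become functions of the earlier coordinates, which we absorb into the unit and the prefactor. To keep $f_i$ prepared, we may have to re-prepare these terms on the base by induction on $n$. I expect this to require rerunning the argument of \Cref{thm: recursive preparation} dimension by dimension, with the refinement interleaved, rather than refining only at the end.

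Second, the bases of the sub-scales must be compatible. This holds because $\widetilde C$ is admissible and its projections refine those of $C_i$.

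Finally, the bounds for the final decomposition are as follows. The length of each scale $S_i$ is $\order[\format]{1}$, its terms have complexity $(\order[\format]{1},\polyfd)$, the exponents have height $\polyfd$, and the units lie in $\FD*[\format][\degree]$; all of these are inherited from the single-function applications. The only dependence on $k$ is in the number of cells, which comes from sharp cell decomposition.
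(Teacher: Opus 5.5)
Your proposal follows the paper's proof: apply \Cref{thm: recursive preparation} to each $f_i$ separately, so that scale lengths, complexities, exponents and units do not depend on $k$, and then merge the $k\cdot\poly_F(D)$ resulting cells using \Cref{thm: sharp cd}. The restriction issue you raise, for refined cells that are graphs in a coordinate where the original cell was not, is real and the paper does not spell it out, but no interleaving is needed. On such a cell, discard the old scales, substitute the graph function as at the start of the proof of \Cref{thm: recursive preparation}, and apply the corollary itself in dimension $n-1$ to the substituted functions, which have complexity $(O_F(1),\poly_F(D))$, together with the indicator of the projected cell; by induction on $n$ the resulting bounds stay $\poly_F(D,k)$ and $(O_F(1),\poly_F(D))$.
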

\begin{proof}
	We apply \Cref{thm: recursive preparation} to each of the functions $f_i$ separately.
	We then apply \Cref{thm: sharp cd} to the resulting $k\cdot \polyfd[\format][\degree]$-many $\LE$-cells to obtain the desired decomposition of $\rReal^n$.
\end{proof}

\subsection{Factoring through a single complex cell}
\label{sec: single cell}

We now state the complete preparation theorem we need for our diophantine applications in \Cref{sec: point counting}.
As in~\cite[Definition 2.19]{BinyaminiCarmonNovikovComplexCells}, we will say that $\phi:\cComplex^N\to\cComplex^N$ is a \emph{power map} if each coordinate $\phi_j$ is of the form $\phi_j(z_1,\dots,z_N)=\pm z_j^{q_j}$ for some non-zero $q_j\in\zIntegers$.

\begin{definition}[Prepared via complex cells]
	\label{def: complex cell prepared}
	Let $C\subset\rReal^n$ be an admissible $\LE$-cell.
	Let $0<\delta<1$ and let $t>1$.
	Let $\boldsymbol{\delta}$ be as in \Cref{lem: thin annuli} with respect to $t$ and $\delta$.
	An $\LE$-function $f:C\to\rReal$ is \emph{prepared via complex cells} on $C$ if there exist a recursive log--exp scale $S$ of length $N$ on $C$, a definable real complex cell $\ext{\cell}{\boldsymbol{\delta}}\subset\cComplex^{N}$, a power map $\phi:\ext{\cell}{\boldsymbol{\delta}}\to\cComplex^{N}$, and a definable real holomorphic function $g:\ext{\cell}{\boldsymbol{\delta}}\to\cComplex$, compatible with $0$, such that the following conditions hold.
	\begin{enumerate}
		\item We have that $S(C)\subset\phi(\rRealPos\cell)$ and ${f=g\comp\phi^{-1}\comp S}$ on $C$.
		\item The fibers of $\cell$ may only be discs at maximal indices of $\structure$.
	\end{enumerate}
	In particular, the number of disc fibers of $\cell$ is at most the dimension of $C$.
\end{definition}

\begin{remark}
	Recall from~\cite[Lemma 5.1]{CarmonAnalyticallyGenerated} that a nowhere-vanishing holomorphic function $g$ on a complex cell $\ext{\cell}{\delta}$ is equal to the product of a power map with effectively bounded exponents and a unit with effectively bounded logarithmic width (when restricted to $\cell$).
\end{remark}

\begin{theorem}[Preparation via complex cells]
	\label{thm: single cell}
	Let $f_1,\dots,f_k\in\LE_{n}$ be of complexity $(\format,\degree)$.
	Let $0<\delta<\order{1}$ and let $t>\order*{1}$.
	Then there is a decomposition of $\rReal^n$ into $\polyfd[\format,n,k][D,t,\delta^{-1}]$ admissible $\LE$-cells, on each of which each function $f_i$ is prepared via a common complex cell $\ext{\cell}{\boldsymbol{\delta}}$ and a common $\log$--$\exp$-scale $S$.

	We may take the $LE$-cells and the terms of the corresponding scales to be of complexity $(\order[\format,n,k]{1},\polyfd[\format,n,k][\degree])$, the length of each scale to be at most $\order[\format,n,k]{1}$, the complex cells and the relevant holomorphic functions $g_i$ to be in $\FD*[\format,n,k][\degree]$, and the exponents in the relevant power maps $\phi$ to be of size at most $\polyfd[\format,n,k][\degree]$.
\end{theorem}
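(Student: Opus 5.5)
Start from \Cref{thm: recursive preparation}, applied to $f_1,\dots,f_k$ together with the coordinate functions $x_1,\dots,x_n$. This gives a decomposition of $\rReal^n$ into $\polyfd[\format,n,k][\degree]$ admissible $\LE$-cells. On each cell $C$ we get a recursive log--exp scale $S:C\to\rReal^N$ of length $N=\order[\format,n,k]{1}$, and every $f_i$ takes the form $\pm\abs{S}^{\lambda_i}u_i(S)$ with $u_i\in\FD*[\format,n,k][\degree]$. After multiplying $\lambda_i$ by a common denominator $Q=\polyfd[\format,n,k][\degree]$, the exponents become integers.

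Choose signs $\sigma_j$ so that $\sigma_jS_j>0$ on $C$; this is possible because every term of the scale has constant sign. Set $\phi_j(z)=\sigma_jz_j^{Q}$, and let $\widetilde S=\phi^{-1}\comp S$, the positive $Q$-th roots of the $\sigma_jS_j$. Then $f_i=G_i\comp\widetilde S$ with
\begin{equation*}
	G_i(z)=\pm z^{Q\lambda_i}\,u_i(\phi(z)).
\end{equation*}
Each $G_i$ is $\structure$-definable and real analytic on $\rRealPos^N$. The image $\widetilde S(C)$ is an $\structure$-definable subset of $\rRealPos^N$ only in the sense that it lies inside the $\structure$-definable o-minimal cell $D$ obtained, as in the proofs of \Cref{sec: prepartion theorems}, by preparing the $u_i$ and the walls.

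Next, apply the $\structure$-preparation machinery of \cite[Theorem 5.3]{CarmonAnalyticallyGenerated} (the complex-cellular form behind \Cref{thm: S-preparation}) to the functions $G_i$ and to the indicator of $D$ in $\rReal^N$. This yields real cellular maps $\ext{\cell_j}{1/2}\to\rReal^N$ covering $D$. On each of these, every $G_i$ is a holomorphic function which either vanishes identically or vanishes nowhere. We want the cellular maps to be power maps in the coordinates, so that they compose with $\phi$ into a power map. To arrange this, choose the scale coordinates so that the cells have the form $\rRealPos\cell$ with the identity as cellular map. The intention is that the fibers are annuli $\{r_1(z')<\abs{z_j}<r_2(z')\}$ whose radii are given by the walls pulled back through $S$. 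This is where the center conditions in the scale are used: they guarantee that each term is comparable to a function of the preceding ones, which is what lets the relevant fibers be annuli or punctured discs rather than general discs. At non-maximal indices, the exponential terms and the non-leading log-terms have a zero-center expansion by construction, and therefore correspond to annular or punctured-disc fibers. Only the most logarithmic term of each variable, at the maximal index, may carry a disc.

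Then refine with \Cref{lem: thin annuli} to pass from $\ext{\cell}{1/2}$ to the extensions $\ext{\cell_j}{\boldsymbol{\delta}}$, with parameter $\delta$ on the disc coordinates and $\delta^t$ on the others. This costs a factor $\poly_N(t,\delta^{-1})$ in the number of cells. Finally, intersect the pullbacks $S^{-1}(\phi(\rRealPos\cell_j))$ with $C$. These pullbacks are $\LE$-sets of controlled complexity, since the cell walls are $\structure$-functions composed with $S$. Use \Cref{thm: sharp cd} to refine into admissible $\LE$-cells, each mapped into a single $\phi(\rRealPos\cell_j)$; a subsequent refinement only shrinks $C$, so the scale is inherited.

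The main obstacle is the second step: showing that the $\structure$-cellular cover can be chosen with disc fibers only at maximal indices. For this I would exploit the explicit form of the scale. At a non-maximal index, the coordinate $w$ is bounded away from $0$ relative to its base by the scale relations (via \eqref{eq: log scale center condition} and the ordering of exponential levels). Each $u_i$ is a unit, so the prepared functions $G_i$ have no zero set in that coordinate. Hence the cellular parametrization of \cite{CarmonAnalyticallyGenerated} can be taken to have annular fibers there. I would prove this by rerunning the \s CPT induction with the additional constraint that the fiber avoids a neighbourhood of $0$ in those coordinates, arguing as in \cite[Lemma 94]{BinyaminiNovikov2019}.
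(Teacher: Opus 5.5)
There is a genuine gap, and it sits exactly at the step you flag as the main obstacle. You claim the $\mathcal{S}$-cellular cover of the $G_i$ can be chosen with identity (pure power) cellular maps and non-disc fibers at every non-maximal index. This is false in general, so rerunning the CPT with an extra constraint cannot deliver it.

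The zero centers you invoke ``by construction'' come from the real $\mathcal{S}$-preparation used when the scale was built. They do not constrain the centers of a later complex cellular cover. Moreover, positivity of $u_i$ on the real image $S(C)$ says nothing about zeros of $u_i\comp\phi$ on complex fibers.

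For example, take $f(x)=2+e^x$ on $\{1<e^x<3\}$ with the scale $S=(x,e^x)$. Then $f=u(S)$ with $u(w)=2+w_2$, and $e^x$ sits at a non-maximal index. Your $G(z)=2+z_2^Q$ vanishes on the circle $\abs{z_2}=2^{1/Q}$. Any zero-centred annulus, punctured disc or disc complement in $z_2$ whose real part (or extension) reaches $2^{1/Q}$ contains such zeros. So the cover must use translated discs, i.e.\ nonzero centers, at this non-maximal index. At maximal indices the CPrT centers are also generally nonzero, so your cellular maps are not power maps there either.

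The missing idea is to change the scale, not the cover. Apply $\mathcal{S}$-preparation to the $F_i$ together with the coordinate functions, and accept the CPrT form $\phi_j=\pm z_j^{q_j}+\varphi_j(z_1,\dots,z_{j-1})$. Reduce by sharp cell decomposition to $S(C)\subset\phi(\cell)$, and induct on the recursive syntactic complexity of $S$.

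If $\varphi_j\not\equiv0$ at a non-maximal index, the center condition makes $S_j$ comparable to an $\LE$-function $\sigma$ of the earlier data. Go back to the stage where $S_j$ entered the scale. There, either rewrite $\exp(s)=\sigma\cdot\exp(s-\log\sigma)$ with a restricted exponential (one fewer maximal subtree), or shorten the log-scale via \Cref{lem: LS complexity drop}. Either way the recursive syntactic complexity strictly drops.

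Once all non-maximal centers vanish, compatibility of $\phi(\cell)$ with the coordinate hyperplanes rules out disc fibers at those indices. At maximal indices, absorb the center into the scale: replace $S_j$ by $S_j-\varphi_j(z_1,\dots,z_{j-1})$, with the $z$'s written as $\LE$-functions of $x$. This is legitimate because $S_1,\dots,S_{j-1}$ depend only on earlier variables, and the center condition keeps the result a log-scale. It also makes $\phi$ a power map.
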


\begin{remark}
	In \Cref{thm: single cell} above, the complex cell $\ext{\cell}{\boldsymbol{\delta}}$ and the scale $S$ depend on the $\LE$-cell but are the same for all functions $f_i$ (restricted to this cell).
	This is needed for the arguments of \Cref{sec: point counting}, but comes at the cost of losing polynomial dependence on the number of functions, $k$.
	For a similar statement with polynomial dependence on $k$ but different complex cells and scales for each function, see \Cref{cor: different scales poly k 2}.
\end{remark}

\begin{proof}[Proof of \Cref{thm: single cell}]
	As in the proof of \Cref{thm: recursive preparation}, we consider the functions $f_1,\dots,f_k$ restricted to some $\LE$-cell $C$ (taking it to be all of $\rReal^{n}$ at the outset).
	On cells where some coordinate, say $x_j$, is a function of the previous coordinates, we substitute this function in place of $x_j$ in $f_1,\dots,f_k$ and proceed by induction on $n$.
	We may thus restrict our attention from now on to cells of dimension $n$.

	By \Cref{thm: recursive preparation}, we may assume that there exists a recursive log--exp scale $S$ on $C$ such that $f_1,\dots,f_k$ are prepared with respect to $S$.
	We proceed by induction on the recursive syntactic complexity of $S$.

	Let $\ell=\order[\format,n,k]{1}$ be the length of $S$.
	Each of the functions $f_i$ factors as $F_i\comp S$ for some $F_i:\rReal^\ell\to\rReal$ in $\FD*[\format,n,k][\degree]$ (by the bounds in the statement of \Cref{thm: recursive preparation}).
	Applying $\structure$-preparation to $\rReal^\ell$ and to the functions $F_i$ and to the coordinate functions $x_1,\dots,x_\ell$, we obtain a cover of $\rReal^\ell$ by the images of the positive real parts of real complex cells $\cell$ under real cellular maps $\phi$.
	We may assume by \Cref{lem: thin annuli} that these complex cells admit $\boldsymbol{\delta}$-extensions.
	By~\cite[Theorem 7, CPrT]{BinyaminiNovikov2019}, each map~$\phi$ may be taken to be such that its coordinates~$\phi_j$ are of the form
	\begin{equation}
		\phi_j(z_1,\dots,z_N)=\pm z_j^{q_j}+\varphi_j(z_1,\dots,z_{j-1}),
	\end{equation}
	where $q_j\in\zIntegers\setminus\{0\}$ and $\varphi_j$ is holomorphic and compatible with $0$.
	The restrictions of the functions $\varphi_j$ to the positive real part of $\cell$ give the center functions of \Cref{thm: S-preparation}.	

	Applying \Cref{thm: sharp cd} to all preimages $S^{-1}(\phi(\cell))$ and to the $\LE$-cells covering $\rReal^n$, we reduce to the case where $S(C)\subset \phi(\cell)$ for one of these complex cells $\cell$.
	Let $S_j$ denote the $j$-th coordinate of $S:C\to\rReal^\ell$.
	Thus, for each~$\varX=(x_1,\dots,x_n)\in C$, there exists a unique $\varZ=(z_1,\dots,z_\ell)\in\rRealPos\cell$ such that
	\begin{equation}
		S_j(\initial{\varX}{1}{n})=\phi_j(\initial{\varZ}{1}{\ell})=\pm z_j^{q_j}+\varphi_j(\initial{\varZ}{1}{j-1})
	\end{equation}
	for all $j$.
	Rearranging, we get
	\begin{equation}
		\label{eq: z in terms of x}
		z_j=(\pm(S_j(\initial{\varX}{1}{n})-\varphi_j(\initial{\varZ}{1}{j-1})))^\frac{1}{q_j}.
	\end{equation}
	Recursively substituting \eqref{eq: z in terms of x} into itself, we may consider each coordinate $z_j$ as an $\LE$-function of $\varX$.

	We now consider the case where $\varphi_j$ is not identically $0$ for some non-maximal index $j$ of $S$.
	In this case, we will replace $S$ by a recursive log--exp scale of lower recursive syntactic complexity, as follows.
	If the $j$-th coordinate of $S$ is of the form $\exp(s)$ for some $s\in\LE_{m}$, we return to the expansion corresponding to \eqref{eq: log exp scale partial preparation} at the relevant stage of the construction of $S$, from which $\exp(s)$ was added to the scale $S$.
	As in \Cref{sec: LE pulling back expansions}, we may rewrite the parse tree of the function on the left-hand side of \eqref{eq: log exp scale partial preparation} --- replacing $\exp(s)$ by $\exp(s-\log\sigma)\cdot\sigma$, for a suitable $\sigma\in\LE_m$ and using a restricted exponential.
	We proceed to construct a new recursive log--exp scale preparing $f_1,\dots,f_k$ from this point, and the resulting scale will be of lower recursive syntactic complexity than $S$ since it is missing $\exp(s)$ and no additional maximal subtrees are created in this way.

	In the same way, if $\varphi_j$ is not identically zero for a non-maximal logarithmic term in $S$, we return to the expansion corresponding to \eqref{eq: log scale partial preparation} at the relevant stage of the construction of $S$.
	After possibly passing to a smaller $\LE$-cell, we replace the log-scale on the right-hand side of \eqref{eq: log scale partial preparation} by a shorter one, using \Cref{lem: LS complexity drop} as in \Cref{{sec: ell positive expansion pull back}}.
	We proceed to construct a new recursive log--exp scale preparing $f_1,\dots,f_k$ from this point, and the resulting scale will be of lower recursive syntactic complexity than $S$ since the length of the corresponding log--exp scale is reduced.

	Proceeding in this way, by induction on the recursive syntactic complexity of~$S$, we may assume that $\varphi_j$ is identically $0$ for all $j$ which are not maximal indices of $S$.
	Since the image $\phi(\cell)$ is, by construction, compatible with the coordinate hyperplanes of $\cComplex^\ell$, the corresponding coordinates of $\cell$ are not of type~$\disc$.

	Finally, if $\varphi_j$ is not identically zero for $j$ a maximal index of $S$, we replace the component $S_j$ of $S$ with its translation $S_j(\varX)-\varphi_j(\initial{\varZ}{1}{j-1})$, where we consider $\varZ$ as an $\LE$-function of $\varX$ as in \eqref{eq: z in terms of x}.
	In this way, $\varphi_j$ may be viewed as a function of $S_1(\varX),\dots,S_{j-1}(\varX)$.
	Since $j$ is a maximal index, if $S_j$ is a function of  the coordinates $\initial{\varX}{1}{n_j}$, then the terms $S_1,\dots,S_{j-1}$ depend only on the previous coordinates $\initial{\varX}{1}{n_{j}-1}$.
	The center conditions \eqref{eq: center condition}, \eqref{eq: log scale center condition} imply that
	\begin{equation}
		0<\abs{S_j - \varphi_j}<\frac{1}{2}\abs{S_j}<\frac{1}{2}\abs{\log\abs{S_{j+1}}}.
	\end{equation}
	Thus replacing $S_j$ by $S_j(\varX)-\varphi_j(\initial{\varZ}{1}{j-1})$ still yields a log--scale with respect to the variable $x_{n_j}$.
	In this way, we may assume that the map $\phi$ is a power map.
	\qedhere
\end{proof}

As in \Cref{cor: different scales poly k 1}, by using different scales and complex cells for different functions, it is possible to recover the polynomial dependence on the number of functions.

\begin{corollary}
	\label{cor: different scales poly k 2}
	Let $f_1,\dots,f_k\in\LE_{n}$ be of complexity $(\format,\degree)$.
	Let ${0<\delta<\order{1}}$ and let ${t>\order*{1}}$.
	Then there is a decomposition of $\rReal^n$ into $\polyfd[\format][D,k,t,\delta^{-1}]$ admissible $\LE$-cells, on each of which each function $f_i$ is prepared via a complex cell $\ext{\cell_i}{\boldsymbol{\delta}}$ and a $\log$--$\exp$-scale $S_i$.

	We may take the $LE$-cells and the terms of the corresponding scales to be of complexity $(\order[\format]{1},\polyfd[\format][\degree])$, the length of each scale to be at most $\order[\format]{1}$, the complex cells and the relevant holomorphic functions $g_i$ to be in $\FD*[\format][\degree]$, and the exponents in the relevant power maps $\phi$ to be of size at most $\polyfd[\format][\degree]$.
\end{corollary}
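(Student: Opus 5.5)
The plan mirrors the proof of \Cref{cor: different scales poly k 1}: apply the single-function version and then refine by sharp cell decomposition. For each $i=1,\dots,k$ separately, I would apply \Cref{thm: single cell} to the single function $f_i$ (so the ``$k$'' in that theorem equals $1$), with the given $\delta$ and $t$. This yields a decomposition $\mathcal{C}_i$ of $\rReal^n$ into $\polyfd[\format,n][D,t,\delta^{-1}]$ admissible $\LE$-cells. On each cell $C\in\mathcal{C}_i$, $f_i$ is prepared via a complex cell $\ext{\cell_{i,C}}{\boldsymbol{\delta}}$, a recursive log--exp scale $S_{i,C}$ and a power map $\phi_{i,C}$. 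The cells and scales have complexity $(\order[\format]{1},\polyfd[\format][\degree])$, the scale lengths are $\order[\format]{1}$, the complex cells and $g_i$ lie in $\FD*[\format][\degree]$, and the power-map exponents are bounded by $\polyfd[\format][\degree]$. Since $\format\geq n$, the dependence on $n$ is absorbed into dependence on $\format$.

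Next, I would take the collection of all cells in $\bigcup_i\mathcal{C}_i$. There are $k\cdot\polyfd[\format][D,t,\delta^{-1}]$ of them, each lying in $\structure(\exp)_{\order[\format]{1},\polyfd[\format][D]}$ by \Cref{prop: complexity translation} and \Cref{lem: complexity to indicator}. Applying \Cref{thm: sharp cd} to this collection gives a decomposition of $\rReal^n$ into $\polyfd[\format][D,k,t,\delta^{-1}]$ admissible $\LE$-cells compatible with all of them. Each new cell $C''$ is then contained in a unique $C\in\mathcal{C}_i$ for every $i$, and I would use $S_i=\restrict{S_{i,C}}{C''}$ together with the same complex cell, power map and holomorphic function $g_i$.

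The main point to check is that preparation via complex cells survives restriction to an admissible subcell. Condition (1) holds trivially: $S_i(C'')\subset S_i(C)\subset\phi(\rRealPos\cell)$, and the factorization $f_i=g_i\comp\phi^{-1}\comp S_i$ persists on $C''$. The delicate part is that a recursive log--exp scale is defined relative to the cylindrical structure of the cell, including its dimension and which coordinates are graphs, and condition (2) refers to maximal indices. Restricting to a lower-dimensional subcell $C''$ could make some coordinate $x_j$ a graph coordinate, in which case $S_{x_j}$ should be empty. I expect this to be the main obstacle.

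To handle it, I would first try to argue that the restricted functions still satisfy the defining properties of a log--exp scale (constant sign and the center conditions), because these are pointwise conditions inherited by subsets. The maximal indices would then only be relabeled, and discs would still occur only at maximal indices. If the graph-coordinate issue genuinely breaks the definition, the fallback is to rerun \Cref{thm: single cell} for $f_i$ on each lower-dimensional cell $C''$, after substituting the graph functions as at the start of its proof. This costs only polynomially many further applications of single-function complexity, so the bounds are unchanged.
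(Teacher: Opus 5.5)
Your proposal is correct and is the paper's proof: apply \Cref{thm: single cell} to each $f_i$ separately, then refine the resulting $k\cdot\poly_F(D,t,\delta^{-1})$ admissible $\LE$-cells by \Cref{thm: sharp cd}. The paper leaves the passage to subcells implicit. Full-dimensional subcells simply inherit the scale and complex cell, but on lower-dimensional subcells $C''$ plain restriction could leave more disc fibers than $\dim C''$, so your fallback (substitute the graph functions and re-prepare, inducting on dimension as at the start of the proof of \Cref{thm: single cell}) is the step actually needed.
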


\begin{proof}
	We apply \Cref{thm: single cell} to each of the functions $f_i$ separately.
	We then apply \Cref{thm: sharp cd} to the resulting $k\cdot \polyfd[\format][\degree,t,\delta^{-1}]$-many $\LE$-cells to obtain the desired decomposition of $\rReal^n$.	
\end{proof}

\subsubsection{The case of \texorpdfstring{$\Ranexp$}{Ran,exp}}
\label{sec: Ranexp preparation}
In the non-sharp setting of $\Ranexp$, we consider $\LE$-functions and $\LE$-cells with respect to $\structure=\Ran$.
We have the following version of \Cref{thm: single cell}.
\begin{theorem}
	\label{thm: single cell Ranexp}
	Let $f_1,\dots,f_k:\rReal^n\to\rReal$ be definable in $\Ranexp$.
	Let $0<\delta<\order{1}$ and let $t>\order*{1}$.
	Then there is a finite decomposition of $\rReal^n$ into  admissible $\LE$-cells, on each of which each function $f_i$ is prepared via a common complex cell $\ext{\cell}{\boldsymbol{\delta}}$ and a common $\log$--$\exp$-scale $S$.
	
	Fixing $k$, if the functions $f_i\in\{f_\lambda\}_{\lambda\in\Lambda}$ belong to a single definable family $\Lambda$, then we may take the number of cells in this decomposition to be $\polyfd[\Lambda,k][t,\delta^{-1}]$, the $LE$-cells and the terms of the corresponding scales to belong to a single definable family, the length of each scale to be uniformly bounded over $\Lambda$, the complex cells and the relevant holomorphic functions $g_i$ to belong to a single definable family, and the exponents in the relevant power maps $\phi$ to be uniformly bounded over~$\Lambda$.
\end{theorem}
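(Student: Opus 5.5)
The plan is to rerun the entire chain of arguments leading to \Cref{thm: single cell} with $\structure=\Ran$, keeping every step definable and uniform in the parameter $\lambda\in\Lambda$. The sharp bounds are replaced throughout by uniform finiteness in definable families. Concretely, each use of $\structure$-preparation (\Cref{thm: S-preparation}) is replaced by the subanalytic preparation theorem in complex cellular form, namely the complex cellular preparation theorem (CPrT) of~\cite[Theorem 7]{BinyaminiNovikov2019} for $\Ran$. Each use of \Cref{lem: thin annuli} is replaced by~\cite[Lemma 94]{BinyaminiNovikov2019}. In the $\Ran$ versions, the number of cells, the complex cells and the cellular maps belong to definable families depending only on the family of input functions. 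The count of cells is polynomial in $t,\delta^{-1}$, and the exponents of the resulting power maps are uniformly bounded. We also use that a family of globally subanalytic functions has bounded format and is uniformly analytically generated. This lets the $\Ran$-preparation stand in for \Cref{thm: S-preparation} with the ``degree'' absorbed into the family.

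Next I would redo \Cref{sec: prepartion theorems} for definable families. First, write a definable family of $\Ranexp$-functions piecewise as $\LE$-functions. This uses the van den Dries--Speissegger/Lion--Rolin description of definable functions by terms, together with compactness/uniform finiteness. The result is that the family $\{f_\lambda\}$ is covered by finitely many parse-tree shapes, whose $\Ran$-nodes lie in fixed definable families parametrized by $\lambda$. Then run the inductions of \Cref{thm: exp split}, \Cref{thm: LS preparation}, \Cref{thm: recursive preparation} verbatim. The induction is on syntactic complexity, on logarithmic depth and on $n$. At each step the new functions are obtained by composing members of definable families, and there are uniformly finitely many of them. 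So the cylinders, centers, scales and units all lie in single definable families, and the scale length is bounded by the tree-format, hence uniformly over $\Lambda$. In the same way, the heights of exponents are bounded, since they come from finitely many families of $\Ran$-preparations.

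Finally, repeat the proof of \Cref{thm: single cell}. Factor the $f_i$ through the common scale $S$ as $F_i\comp S$ with $F_i$ in a definable family of $\Ran$-functions on $\rReal^\ell$. Apply CPrT with $\boldsymbol{\delta}$-extensions to the $F_i$ and the coordinates, and pull back by $S$ using sharp-free cell decomposition in $\Ranexp$. Then use the same descent on recursive syntactic complexity to force the centers $\varphi_j\equiv0$ at non-maximal indices, and translate at maximal indices, so that $\phi$ becomes a power map. The only place the dependence on $t,\delta^{-1}$ enters is the count of cells from CPrT and \Cref{lem: thin annuli}. These are polynomial in $t,\delta^{-1}$ with constants depending on the family, and all other choices are finite uniform in $\Lambda$; this gives $\polyfd[\Lambda,k][t,\delta^{-1}]$.

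The main obstacle I expect is the uniformity bookkeeping in the descent on recursive syntactic complexity. Each time we return to an earlier stage and rebuild the scale, we must ensure that the new objects again lie in finitely many definable families independent of $\lambda$. In particular, the polynomial dependence on $t,\delta^{-1}$ must not be iterated in a way that compounds badly. I would handle this by noting that the descent has length bounded by the tree-format, so the $t,\delta$-dependent CPrT step is applied a uniformly bounded number of times. I would also note that all intermediate refinements before the final complex-cell step are independent of $t,\delta$.
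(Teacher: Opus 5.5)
Your outline is viable, but it gets uniformity by a different and much heavier route than the paper.

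\textbf{The paper's route.} The paper proves only the non-uniform statement by rerunning the proof of \Cref{thm: single cell} with $\structure=\Ran$. It then obtains all the uniformity at once (cf.\ \cite[Remark 28]{BinyaminiNovikov2019}): it applies the theorem a single time to the total space of the family, with the parameter as the first coordinate, and specializes. The fiber $C_\lambda$ of an admissible $\LE$-cell $C$ over $\lambda$ is again an admissible $\LE$-cell. Fixing the block of the recursive scale attached to the parameter coordinate yields a recursive scale $S_\lambda$ on $C_\lambda$. The fibered structure of $\ext{\cell}{\boldsymbol{\delta}}$ lets one fix its first coordinates to get $\ext{\cell_\lambda}{\boldsymbol{\delta}}$ with $S_\lambda(C_\lambda)\subset\phi_\lambda(\rRealPos\cell_\lambda)$. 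On this fiber cell, $\phi$ and the $g_i$ simply restrict, and disc fibers remain only at maximal indices. The cell count, scale lengths, exponents and definable families are then inherited from finitely many objects on the total space.

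\textbf{Your route.} You re-prove every step of \Cref{sec: prepartion theorems} and of \Cref{thm: single cell} in uniform form. This forces you to:
carry a uniform induction hypothesis throughout;
split $\Lambda$ definably whenever a case distinction ($\theta_i\equiv 0$, $\sign a_i$, $\varphi_j\equiv 0$, \dots) depends on $\lambda$;
and check that each round of the descent stays within finitely many families.
None of this yields anything beyond what the fibering argument gives. Your own first step, describing $\{f_\lambda\}$ by terms with $\lambda$ treated as extra variables, is already the total-space trick. Carried through to the end, it would have made the rest of the bookkeeping unnecessary.

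\textbf{Two small corrections to your bookkeeping.} First, in the proof of \Cref{thm: single cell} the $\boldsymbol{\delta}$-extensions are produced before the centers $\varphi_j$ are inspected. So the $t,\delta$-dependent step occurs in every round of the descent, not only at the end, and your claim that all earlier refinements are independent of $t,\delta$ does not match how the argument is arranged. Your bounded-depth argument already gives polynomial dependence anyway. Second, the pullback step also depends on $t,\delta$: there you must decompose compatibly with the $\poly(t,\delta^{-1})$-many preimages $S^{-1}(\phi(\cell))$. In the non-sharp setting this needs a bound polynomial in the number of sets drawn from a fixed definable family, and that point is required in either approach.
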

\begin{proof}
	The proof is similar to that of \Cref{thm: single cell}, ignoring questions of effectivity.
	Uniformity in definable families is obtained as follows (cf.\ \cite[Remark 28]{BinyaminiNovikov2019}).
	First, we apply the theorem to the total space $X$ of the definable family~$\Lambda$, obtaining a decomposition into admissible $\LE$-cells.
	Let $C$ be one of these cells and let $S$ be the corresponding recursive $\log$--$\exp$-scale on $C$.
	
	Assume for simplicity that the parameters of the family $\Lambda$ are given by the first coordinate $x_1$ of $X$.
	Fixing the value $x_1=\lambda$ in order to specialize to a member $X_{\lambda} $ of the family, we also specialize $C$ to an admissible $\LE$-cell $C_\lambda$ and the functions $f_1,\dots,f_k$ to functions $f_{1,\lambda},\dots,f_{k,\lambda}$.

	By fixing the first $\order[\Lambda]{1}$ coordinates of $S$ which correspond to the variable~$x_1$ (according to the choice $x_1=\lambda$), we obtain a recursive $\log$--$\exp$-scale $S_\lambda$ on $C_\lambda$.
	The cellular structure of the complex cell $\ext{\cell}{\boldsymbol{\delta}}$ corresponding to $C$ allows us to also fix its first $\order[\Lambda]{1}$ coordinates and specialize to a complex cell $\ext{\cell_\lambda}{\boldsymbol{\delta}}$ such that $S_\lambda(C_\lambda)\subset \phi_\lambda(\rRealPos\cell_\lambda)$, where $\phi_\lambda$ is the restriction of the relevant power map~$\phi$ to $\ext{\cell_\lambda}{\boldsymbol{\delta}}$.
	Thus each function $f_{i,\lambda}$ is given by $g_{i,\lambda}\comp \phi_{\lambda}^{-1}\comp S_\lambda$, where $g_{i,\lambda}$ is the restriction of the relevant holomorphic map $g_i:\ext{\cell}{\boldsymbol{\delta}}\to\cComplex$ to $\ext{\cell_\lambda}{\boldsymbol{\delta}}$.
\end{proof}

\section{Interpolation and point counting}
\label{sec: point counting}
In this section, we prove Wilkie's conjecture for $\structure(\exp)$ (\Cref{thm: wilkie intro}).
We prove \Cref{thm:C} and its analogue for $\structure(\exp)$-definable sets in \Cref{sec: algebraic point interpolation} below.
We repeat the statement of \Cref{thm: wilkie intro} here for the convenience of the reader.
For a set $X\subset\rReal^n$, we write $X(g,H)$ for the set of algebraic points in $X$ of degree at most $g$ (that is, points $x\in X$ such that $[\qRationals(x_1,\dots,x_n):\qRationals]\leq g$) and multiplicative Weil height at most $H$.
We write $X^{\operatorname{alg}}$ for the union of all connected, positive dimensional, semialgebraic subsets of $X$, and set $X^{\operatorname{trans}}=X\setminus X^{\operatorname{alg}}$.
Our second main result is the following, where we use the FD-filtration for $\structure(\exp)$ constructed in \Cref{sec: so min}.
\begin{theorem}
	\label{thm: wilkie}
	Let $X\subset \rReal^n$ be  in $\structure(\exp)_{\format,\degree}$.
	Then 
	\begin{equation}
		\label{eq: Wilkie conjecture bound}
		\# X^{\operatorname{trans}}(g,H)\leq\poly_{\format}(\degree,g,\log H).
	\end{equation}
\end{theorem}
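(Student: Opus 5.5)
We will follow the standard Pila--Wilkie strategy in its sharp form, as in~\cite{BinyaminiNovikovZak2024}, but with the interpolation step carried out via \Cref{thm: single cell} in place of sharp derivatives. The argument splits into an interpolation step, which produces hypersurfaces, and an induction on dimension, which handles what those hypersurfaces cut out.

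\textbf{Interpolation.} Let $X\subset\rReal^n$ be of dimension $m<n$. By \Cref{thm: sharp cd} we may assume $X$ is a single admissible $\LE$-cell of dimension $m$. After an affine-inversion trick ($x\mapsto 1/x$ on coordinates with $\abs{x}>1$, which preserves height and degree of algebraic points) we may assume $X\subset[-1,1]^n$.

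Apply \Cref{thm: single cell} to the coordinate functions $x_1,\dots,x_n$ restricted to $X$. Here $k=n\le\format$, so the lost $k$-dependence is absorbed into $\format$. Choose $\delta=1/\poly_\format(\degree,g,\log H)$ and $t=\order[\format]{1}$ large. This writes each piece as $x_i=G_i\comp\phi^{-1}\comp S$, where the $G_i$ are holomorphic on a common cell $\ext{\cell}{\boldsymbol{\delta}}\subset\cComplex^N$ with $N=\order[\format]{1}$. Crucially, $\cell$ has at most $m$ disc fibres, and the non-disc fibres carry extension parameter $\delta^t$. The number of pieces is $\poly_\format(\degree,\delta^{-1})$.

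On each piece we run the Bombieri--Pila / Binyamini--Novikov determinant method on the holomorphic map $G=(G_1,\dots,G_n)$ on $\ext{\cell}{\boldsymbol{\delta}}$. Because $G$ is bounded there (by $\order{1}$ after rescaling), the interpolation determinant of monomials of degree $d$ evaluated at $\mu$ algebraic points in $\cell$ admits a Schwarz-lemma bound of order $\delta^{e}$ with $e\sim\mu^{1+1/m}$. The annular coordinates contribute only factors like $\delta^{t\cdot(\cdot)}$, which we check are negligible. Comparing with the Liouville lower bound $H^{-O(g d\mu)}$ for a nonzero determinant, we conclude that $X(g,H)\cap(\text{piece})$ lies in one hypersurface of degree $d=\order[\format]{g^{m+1}}\cdot(\log H)^m$ after choosing $\delta$ appropriately. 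Summing over pieces gives $\poly_\format(\degree,g,\log H)$ hypersurfaces of degree $\poly_\format(g,\log H)$.

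I expect the main obstacle to be the hyperbolic estimate: showing that the $\delta^t$-thin non-disc fibres make the effective dimension $m$ rather than $N$. I will adapt \cite[Lemma 94 and Section 7]{BinyaminiNovikov2019} by bounding the Taylor coefficients in the annular variables using \Cref{lem: thin annuli}.

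\textbf{Induction on dimension.} Intersect $X$ with each hypersurface $V$. The set $X\cap V$ lies in $\structure(\exp)_{\order[\format]{1},\poly_\format(\degree,g,\log H)}$, using weak-sharp o-minimality (\Cref{thm: s exp sharp}) together with the generation axiom for polynomials. If $\dim(X\cap V)<\dim X$, we recurse on $X\cap V$; the recursion depth is at most $n\le\format$, so polynomial bounds compose to polynomial bounds. If $\dim(X\cap V)=\dim X$, we use the standard block decomposition. Via sharp cell decomposition into cells that are either algebraic (and hence contained in $X^{\operatorname{alg}}$) or of lower dimension in $V$, we show that the part of $X^{\operatorname{trans}}$ meeting a full-dimensional piece $V\cap X$ is controlled inductively. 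This follows the argument of \cite[Section 3]{BinyaminiNovikovZak2024}, whose formal deduction from interpolation uses only sharp o-minimality and sharp cell decomposition, both supplied by \Cref{thm: S exp so min}. Finally, zero-dimensional pieces contribute at most their number of points, which is polynomially bounded. Assembling these bounds yields \eqref{eq: Wilkie conjecture bound}.
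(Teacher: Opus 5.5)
Your skeleton matches the paper's: reduce to one admissible $\LE$-cell, interpolate via \Cref{thm: single cell}, intersect, and induct on dimension. The interpolation step, which the paper isolates as \Cref{thm: interpolation}, fails as you set it up, for two reasons.

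\textbf{The Liouville bound.} The lower bound $H^{-O(gd\mu)}$ for a nonzero interpolation determinant is only available when all $\mu$ points lie in one field of degree $\leq g$, e.g.\ for rational points. For points of degree $\leq g$ in different fields, $\Delta=\det(p_i^{\alpha_j})$ lies in the compositum $\qRationals(p_1,\dots,p_\mu)$, whose degree can be as large as $g^{\mu}$. Liouville then only yields $\abs{\Delta}\geq H^{-O(g^{\mu}d\mu)}$, which is useless. So the determinant method does not give polynomial dependence on $g$. This is why the paper uses Wilkie's Thue--Siegel approach (\Cref{lem: upper bound}). \Cref{lem: siegel} produces a single integer polynomial $P$ in $m+1$ of the coordinates, with coefficients at most $N+1$, which is uniformly tiny on the whole cell. \Cref{lem: Liouville bound} is then applied to $P(\boldsymbol{\xi})$ one point at a time, where the degree really is $\leq g$.

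\textbf{The choice $t=O_F(1)$.} This does not make the non-disc fibres negligible. With bounded $t$, a non-disc exponent costs only a constant multiple of a disc exponent. Once weights exceed $t$, the number of normalized monomials of weight at most $w$ grows like $w^{\ell}$ rather than $w^{m}$, where $\ell$ is the scale length (your $N$). Your Schwarz bound then degrades to that of an $\ell$-dimensional cell, and $\ell$ may well exceed $n$. You need $\delta^{t}$ to beat the Liouville bound, so $t$ must grow polynomially in $d$ and $\log H$. The paper takes $\delta=O_\ell(1)$ and $t=d(\log N)^{1/(m+1)}$ with $N=H^d$; \Cref{thm: single cell} permits this at the cost of $\poly(t,\delta^{-1})$ cells.

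\textbf{The induction step.} A single hypersurface $V\subset\rReal^n$ has dimension $n-1$. If $\dim X<n-1$, a cell of $X\cap V$ of dimension $\dim X$ need be neither semialgebraic nor of lower dimension, and nothing is gained. For example, take $X=\{(x,e^{x},0):0<x<1\}$ and $V=\{x_3=0\}$. The paper instead interpolates in each $(\dim X+1)$-tuple of coordinates and intersects all the resulting zero sets. This gives an algebraic set $Y$ with $\dim Y\leq\dim X$. Decompose $X\cap Y$ by \Cref{thm: sharp cd}: every cell of dimension $\dim X$ lies in $X^{\operatorname{alg}}$ and is discarded. The remaining cells have smaller dimension, so the induction on $\dim X$ closes with polynomial bounds.
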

\begin{remark}
	Let $\structure$ be the structure analytically generated from $\RrPfaff$ (see~\cite[Definition 2.1]{CarmonAnalyticallyGenerated} for this notion).
	This structure contains the restricted exponential function since both it and the restricted sine function are Pfaffian.
	\Cref{thm: wilkie} for $\structure(\exp)$ then implies, in particular, Wilkie's original conjecture for $\Rexp$.
	This conjecture was recently proved by Binyamini--Novikov--Zak in~\cite{BinyaminiNovikovZak2024} with bounds of the form $\poly_X(g,\log H)$.
\end{remark}

The proof of \Cref{thm: wilkie} relies on the following analogue of \Cref{thm:C} for $\structure(\exp)$-definable sets.
\begin{theorem}
	\label{thm: interpolation}
	Let $X\subset [0,1]^n$ in $\structure(\exp)_{\format,\degree}$.
	Assume that $\dim X<n$.

	Then $X(g,H$) is contained in the union of at most $\poly_{\format}(\degree,g,\log H)$ algebraic hypersurfaces of degree at most $\order[F]{1}\cdot g^{\dim X +1}(\log H)^{\dim X}$.
\end{theorem}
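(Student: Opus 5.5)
We first reduce to the case of a single admissible $\LE$-cell.
By \Cref{thm: sharp cd}, $X$ is a union of $\polyfd$ admissible $\LE$-cells of tree-complexity $(\order[\format]{1},\polyfd)$, each of dimension at most $m=\dim X<n$.
It therefore suffices to treat one such cell $C$.
We apply \Cref{thm: single cell} to the coordinate functions $x_1,\dots,x_n$ on $C$, with $k=n$, so that polynomial dependence on $k$ is not needed.
We take $\delta$ a small constant and $t=\order[\format]{1}\log H$ (or comparable), to be fixed later.
This refines $C$ into $\poly_{\format}(\degree,\log H)$ admissible $\LE$-cells $C_j$.
On each $C_j$ we get a recursive scale $S$, a real complex cell $\ext{\cell}{\boldsymbol{\delta}}\subset\cComplex^N$ with $N=\order[\format]{1}$, a power map $\phi$, and holomorphic $g_1,\dots,g_n$ on $\ext{\cell}{\boldsymbol{\delta}}$ such that $x_i=g_i\comp\phi^{-1}\comp S$.
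The cell $\cell$ has at most $\dim C_j\le m$ disc fibers, and its remaining fibers are points, annuli, punctured discs or disc complements, whose extension parameter is $\delta^t$.
Thus $C_j\cap X(g,H)$ is contained in $G(\rRealPos\cell)$, where $G=(g_1,\dots,g_n):\ext{\cell}{\boldsymbol{\delta}}\to\cComplex^n$ is a bounded holomorphic map with image in $[0,1]^n$ on the real part.

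Next we run a Bombieri--Pila/Cluckers--Pila--Wilkie style interpolation determinant argument on $G$.
Fix a degree $d$ and let $M=\binom{n+d}{n}$ be the number of monomials.
Take any $M$ points $p_\alpha=G(z_\alpha)$, $z_\alpha\in\rRealPos\cell$, of degree $\le g$ and height $\le H$, and form $\Delta=\det(P_\beta(p_\alpha))$ over monomials $P_\beta$ of degree $\le d$.
Arithmetically, if $\Delta\ne 0$ then $\abs{\Delta}\ge H^{-\order{g dM}}$, by the usual Liouville/norm estimate for algebraic numbers of degree $\le g$.
Analytically, the functions $P_\beta\comp G$ are holomorphic and bounded on $\ext{\cell}{\boldsymbol{\delta}}$, and we use the hyperbolic Schwarz lemma for interpolation determinants on complex cells (as in \cite{BinyaminiNovikov2019}).
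This should give
\[
\log\abs{\Delta}\le -c\, M^{1+1/m}\log(1/\delta)+ \order{M}\cdot(\text{bounded terms}),
\]
provided the non-disc fibers are thin enough that they contribute negligibly.
The key point is that, with extension $\delta^t$ in the $N-m$ non-disc coordinates, the gain from those directions behaves like $t\log(1/\delta)$ per unit of multiplicity.
The counting of multiplicities is therefore governed by the $m$ disc coordinates alone, which gives the exponent $1+1/m$ instead of $1+1/N$.
Comparing with the arithmetic lower bound forces $\Delta=0$ once $M^{1/m}\gtrsim g d\log H$.
Since $M\sim d^n/n!$, this is where the degree bound $d=\order[\format]{1} g^{m+1}(\log H)^m$ comes from, after optimizing over $\delta$ (a constant) and choosing $t$.
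When $\Delta=0$ for every choice of $M$ points, the points in $G(\rRealPos\cell)$ lie on one hypersurface of degree $d$.

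The number of hypersurfaces is then the number of cells $C_j$, namely $\poly_{\format}(\degree,g,\log H)$, since $t$ and $\delta^{-1}$ are polynomial in $g$ and $\log H$.
The main obstacle is the analytic estimate in the second step.
One must verify that annulus-type and other non-disc coordinates, even though they raise the dimension of $\cell$ to $N$, only contribute to the Taylor-vanishing-order count with weight $t\log(1/\delta)\gg\log(1/\delta)$.
This should follow by adapting the proof of \cite[Lemma 94]{BinyaminiNovikov2019} and the interpolation-determinant bound in \cite{BinyaminiNovikovZak2024}.
I would first handle products of discs and $\delta^t$-thin annuli directly, through Laurent expansions in the annular variables.
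The sharpness of all bounds comes from \Cref{thm: single cell} keeping every complexity at $(\order[\format]{1},\polyfd)$.
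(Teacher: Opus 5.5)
There is a genuine gap in the arithmetic half of your argument. The lower bound $\abs{\Delta}\ge H^{-O(gdM)}$ holds for rational points, but it fails for algebraic points of degree $g>1$. The $M$ points $p_\alpha$ generally lie in different fields $\qRationals(p_\alpha)$, so $\Delta$ lies in their compositum, whose degree can be as large as $g^M$. The product formula then only gives $\log\abs{\Delta}\ge -[\qRationals(\Delta):\qRationals]\cdot O(dM\log H+M\log M)$. That exponent, of order $g^M dM\log H$, cannot be beaten by any analytic bound of the form $-cM^{1+1/m}\log(1/\delta)$.

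This loss is real, not an artifact of the proof. In the simplest case $n=d=1$ one has $\Delta=\xi_2-\xi_1$. There are $\gg_g H^{g(g+1)}$ algebraic numbers of degree $g$ and height at most $H$ in $[0,1]$, so pigeonholing gives pairs with $\abs{\xi_1-\xi_2}\ll_g H^{-g(g+1)}$. Hence no lower bound with exponent linear in $g$ can hold. Sorting the points by number field does not rescue this either: the number of fields that can occur grows polynomially in $H$, which would destroy the $\poly(\log H)$ count. As set up, your interpolation-determinant argument only handles $g=1$.

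The paper avoids determinants altogether and builds a single auxiliary polynomial, following Wilkie. By \Cref{lem: siegel} it finds one $P\in\zIntegers[\xi_1,\dots,\xi_{m+1}]$ in only $m+1$ of the coordinate functions, of degree at most $d$ and with coefficients at most $N+1$, such that $P\circ f$ is uniformly small on $\cell$ (\Cref{lem: upper bound}). The proof expands $P\circ f$ in normalized monomials:
\begin{itemize}
\item terms with $\abs{\beta}>t$, and terms involving a non-disc variable, are bounded by $\poly(N,t)\,O(1)^d\delta^t$ (this is exactly where the $\delta^t$-thin non-disc fibres enter, matching your heuristic);
\item the roughly $t^m$ remaining disc-only coefficients are made small by Siegel's lemma with roughly $d^{m+1}$ unknowns.
\end{itemize}
The Liouville bound (\Cref{lem: Liouville bound}) is then applied to $P(\xi)$ one point $\xi$ at a time. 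Since $\qRationals(\xi)$ has degree at most $g$, the exponent is linear in $g$. Taking $N=H^d$, $t=d(\log N)^{1/(m+1)}$ and $d\approx g^{m+1}(\log H)^m$ forces $P(\xi)=0$.

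Three secondary points:
\begin{itemize}
\item Your own comparison $M^{1/m}\gtrsim gd\log H$ with $M\sim d^n$ yields $d\approx(g\log H)^{m/(n-m)}$, not the $g^{m+1}(\log H)^m$ you state.
\item The thinness of the non-disc fibres only helps if $t\gtrsim M^{1/m}$, not $t\approx\log H$.
\item The boundedness of $G$ on $\ext{\cell}{\boldsymbol{\delta}}$ needs an argument such as \Cref{rem: small functions in extension}.
\end{itemize}
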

We prove \Cref{thm: interpolation} in \Cref{sec: algebraic point interpolation} below.

\begin{proof}[Proof of \Cref{thm: wilkie}]
	The proof follows the general strategy of the proof of the Pila--Wilkie theorem \cite{PilaWilkie2006} which is by now standard, and so we only recall the main points here.

	Fix $g,H\in\nNatural$.
	Since the height and degree of an algebraic point are preserved under coordinate inversion and negation, we may assume $X\subset[0,1]^n$.
	By \Cref{thm: sharp cd}, we may decompose $X$ into $\polyfd[\format][\degree]$ admissible $\LE$-cells in $\structure(\exp)_{\order[\format]{1},\polyfd[\format][\degree]}$.
	It therefore suffices to treat the case where $X$ itself is an admissible $\LE$-cell (by changing the polynomial in the right-hand side of \eqref{eq: Wilkie conjecture bound}).
	In particular, we have in this case that $X$ is pure dimensional.
	Denote its dimension by $\dim X$.

	If $\dim X=0$, then $X$ is a union of $\polyfd$ points and we are done, and so we assume otherwise from now.
	As $\format\geq \dim X$, it suffices to prove \Cref{thm: wilkie} with the right-hand side of \eqref{eq: Wilkie conjecture bound} replaced by $\poly_{\format,\dim X}(\degree,g,\log H)$.

	Using \Cref{thm: interpolation}, we find, for every $(\dim X+1)$-tuple of coordinate functions, a non-zero polynomial in these coordinates of degree at most $\poly_{\format,\dim X}(\degree,g,\log H)$ whose vanishing set contains $X(g,H)$.
	Intersecting the zero-sets of all of these polynomials yields an algebraic set $Y$ of dimension at most $\dim X$. 

	We consider the intersection $X\cap Y$, which is of format $\order[\format,\dim X]{1}$ and degree $\polyfd[\format,\dim X][\degree,g,\log H]$.
	By \Cref{thm: sharp cd}, we may decompose ${X\cap Y}$ into $\polyfd[\format,\dim X][\degree,g,\log H]$ admissible $\LE$-cells of format $\order[\format,\dim X]{1}$ and degree $\polyfd[\format,\dim X][\degree,g,\log H]$.
	Any of these cells which attains the maximal dimension $\dim X$ is contained in $X^{\operatorname{alg}}$ and may thus be discarded.
	For each of the remaining cells, we may proceed by induction on dimension.
\end{proof}

\subsection{Interpolating algebraic points}
\label{sec: algebraic point interpolation}
In this section, we prove \Cref{thm: interpolation}, as well as its $\Ranexp$ analogue, \Cref{thm:C}.
We follow an approach proposed by Wilkie in~\cite{Wilkie2015}.
See also~\cite[Section 6.1]{Binyamini2022} and~\cite[Appendix B]{BinyaminiNovikov2019} for arguments similar to those in this section.

We will need the following variant of the Thue--Siegel lemma.
For a matrix $A\in\operatorname{Mat}_{m\times n}(\rReal)$, write $\norm{A}_{\infty}$ for $\max_{1\leq i\leq m}\sum_{j=1}^n{\abs{A_{i,j}}}$.
\begin{lemma}[{\cite[Lemma 4.11]{Waldschmidt2000}}]
	\label{lem: siegel}
	Let $A\in\operatorname{Mat}_{m\times n}(\rReal)$ and $N\in\nNatural$.
	Then there exists a vector $0\neq v\in\zIntegers^n$, all of whose entries are bounded by $N+1$, such that the entries of $Av$ are all bounded by
	\begin{equation}
		N\cdot N^{-\frac{n}{m}}\cdot \norm{A}_{\infty}.
	\end{equation}
\end{lemma}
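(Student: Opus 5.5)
This is the classical Siegel lemma via the pigeonhole principle (Dirichlet box principle), and I would prove it that way.

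First reduce to the case $\norm{A}_\infty>0$; if $A=0$, any $v=e_1$ works. Consider the set $V=\{0,1,\dots,N\}^n\subset\zIntegers^n$, which has $(N+1)^n$ elements. Apply the linear map $v\mapsto Av$ to $V$. For each row $i$, the $i$-th entry $(Av)_i=\sum_j A_{i,j}v_j$ lies in an interval $I_i$ of length $N\sum_j\abs{A_{i,j}}\leq N\norm{A}_\infty$. Indeed, splitting by the sign of $A_{i,j}$, it lies between $N\sum_{A_{i,j}<0}A_{i,j}$ and $N\sum_{A_{i,j}>0}A_{i,j}$. Thus $A(V)$ is contained in the box $\prod_i I_i\subset\rReal^m$, each side of length at most $N\norm{A}_\infty$.

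Next, subdivide each side into $M$ equal subintervals, where $M$ is chosen so that $M^m<(N+1)^n$. The natural choice is $M=\lceil N^{n/m}\rceil$ when $N^{n/m}$ is an integer. To avoid integrality issues, I would rather take $M=\lfloor (N+1)^{n/m}\rfloor$ adjusted downward by one if needed, or use closed subintervals of length exactly $N\norm{A}_\infty/N^{n/m}$. Then at most $\lceil N^{n/m}\rceil^m$ small boxes are needed, and we compare this with $(N+1)^n$. By pigeonhole, two distinct $v,v'\in V$ have $Av,Av'$ in the same small box. Setting $w=v-v'$ gives $0\neq w\in\zIntegers^n$ with entries of absolute value at most $N\leq N+1$, and each entry of $Aw$ bounded by the side length $N\norm{A}_\infty/M$, which is at most $N\cdot N^{-n/m}\norm{A}_\infty$ provided $M\geq N^{n/m}$.

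The only delicate step is the counting: we need an integer $M\geq N^{n/m}$ with $M^m<(N+1)^n$. Such an integer need not exist in general; for instance, if $N^{n/m}$ is just below an integer $k$, then $k^m$ may exceed $(N+1)^n$. The fix I would try is to use boxes of side exactly $h=N\norm{A}_\infty N^{-n/m}$, closed, so that points at distance $\leq h$ coordinatewise are identified. Covering an interval of length $L=N\norm{A}_\infty$ by closed intervals of length $h$ requires $\lceil L/h\rceil=\lceil N^{n/m}\rceil$ of them, so the issue is the same. Alternatively, and this is the cleanest route, use $v\in\{0,\dots,N+1\}^n$, which is permitted since the bound on entries is $N+1$. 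This gives $(N+2)^n$ points, and $\lceil N^{n/m}\rceil^m\leq (N^{n/m}+1)^m$. One then checks $(N^{n/m}+1)^m<(N+2)^n$ in the relevant range (e.g.\ $n\geq m$, where $(x^{n/m}+1)^{m}\leq (x+1)^n$ by convexity, i.e.\ $(a+1)^{p}\geq a^p+1$ for $p=n/m\geq 1$ applied with $a=N^{1/m}$...). If $n<m$ the claimed bound exceeds $N\norm{A}_\infty$ only mildly and a direct choice $v=e_1$ may not suffice, so I would handle that by the same pigeonhole with the enlarged range. The interval lengths then become $(N+1)\sum_j\abs{A_{ij}}$, and I would absorb the resulting constant by this adjustment. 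I expect this bookkeeping of the rounding to be the main obstacle, while the pigeonhole argument itself is routine.
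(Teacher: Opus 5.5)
You have the right strategy: the Dirichlet box principle is the standard proof of this Siegel-type lemma, and the paper gives no proof of its own, only the citation to Waldschmidt. As written, though, your proposal does not close. The places where you depart from the direct count are mistaken.

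\textbf{The rounding obstruction does not exist for $n\geq m$.} Keep $V=\{0,\dots,N\}^n$ and take $M=\lceil N^{n/m}\rceil$. The inequality you already quote, $(N+1)^{p}\geq N^{p}+1$ for $p=n/m\geq 1$, gives $M<N^{n/m}+1\leq (N+1)^{n/m}$. Hence $M^m<(N+1)^n$, which is exactly what pigeonhole needs. Enclose each $I_i$ in an interval of length $N\norm{A}_\infty$ and split it into $M$ closed pieces. You get $0\neq w\in\zIntegers^n$ with $\abs{w_j}\leq N$ and $\abs{(Aw)_i}\leq N\norm{A}_\infty/M\leq N\cdot N^{-n/m}\norm{A}_\infty$. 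Your scenario of $N^{n/m}$ lying just below an integer $k$ with $k^m>(N+1)^n$ can only occur when $n<m$.

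\textbf{The enlarged range loses the exact constant.} Using $\{0,\dots,N+1\}^n$ enlarges the box sides to $(N+1)\norm{A}_\infty$. The statement has exact constants, so there is nothing into which the factor $(N+1)/N$ can be ``absorbed''. You would have to re-choose $M$ and redo the count.

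\textbf{The case $n<m$ is handled incorrectly.} For $N\geq 1$ the bound $N^{1-n/m}\norm{A}_\infty$ is at most $N\norm{A}_\infty$, not larger. What matters is that it is at least $\norm{A}_\infty$ when $n\leq m$. So $v=e_1$ \emph{does} suffice, since $\abs{(Ae_1)_i}=\abs{A_{i,1}}\leq\norm{A}_\infty$. By contrast, the pigeonhole with the enlarged range that you propose for this case can genuinely fail. For example, with $n=1$, $m=2$, $N=3$ you would need an integer $M\geq 4/\sqrt{3}$, hence $M\geq 3$, with $M^2<(N+2)^n=5$, which is impossible.

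The complete proof is therefore:
\begin{itemize}
\item if $n\leq m$, take $v=e_1$;
\item if $n\geq m$, run the direct box argument with $V=\{0,\dots,N\}^n$ and $M=\lceil N^{n/m}\rceil$.
\end{itemize}
The two cases overlap harmlessly at $n=m$, and this even yields entries bounded by $N$ rather than $N+1$.
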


We now set up some notation to be used for the rest of this section.
Let $X\subset[-1,1]^n$ be an admissible $\LE$-cell in $\structure(\exp)_{\format,\degree}$ with $\dim X < n$.
Fix $H,g\in\nNatural$ and let $\boldsymbol{\delta}$ be as in \Cref{lem: thin annuli} with respect to parameters $t>\order*{1}$ and $0<\delta<\order{1}$.
The values of $t$ and $\delta^{-1}$ will be determined later, and will be taken to be bounded by $\polyfd[\format][\degree,g,\log H]$.
Applying \Cref{thm: single cell} to $X$ and the coordinate functions $x_1,\dots,x_n$, we reduce to the case where $X$ admits a recursive log--exp scale $S:X\to\rReal^\ell$, for $\ell=\order[\format]{1}$, such that the coordinate functions on $X$ satisfy $x_i=f_i\comp\phi^{-1}\comp S$, where $\phi:\ext{\cell}{\boldsymbol{\delta}}\to\cComplex^\ell$ is a power map on a real complex cell~$\ext{\cell}{\boldsymbol{\delta}}$ with $m\leq\dim X$ disc coordinates, $S(X)\subset\phi(\rRealPos\cell)$, and $f_i:\ext{\cell}{\boldsymbol{\delta}}\to\cComplex$ are holomorphic.
We assume that the components of $S$ are all in $\structure(\exp)_{\format,\degree}$ and that $\ext{\cell}{\boldsymbol{\delta}},\phi\in\FD[\format][\degree]$.
Finally, for $r\in\nNatural$ and $\alpha\in\zIntegers^r$, we write $\abs{\alpha}=\sum_{i=1}^r \abs{\alpha_i}$ and let $\operatorname{pos}(\alpha)$ be the vector whose $i$-th coordinate is~$\abs{\alpha_i}$.

\begin{remark}
	\label{rem: small functions in extension}
	In the construction of the complex cells $\ext{\cell}{\boldsymbol{\delta}}$ and the functions $f_i$ in the proof of the $\structure$-preparation theorem in~\cite[Theorem 5.3]{CarmonAnalyticallyGenerated}, one may take the functions $f_i$ to be either identically equal to $1$ or $0$ on $\ext{\cell}{\boldsymbol{\delta}}$ or else nowhere equal to $1$  or $0$ on $\ext{\cell}{\boldsymbol{\delta}}$.
  	Assume we are in this latter case and take $t,\delta^{-1}=\polyfd[\format][\degree]$.
  	We note that $\abs{f_i}\leq 1$ on $\phi^{-1}(S(X))$ and so, by~\cite[Lemma 55]{BinyaminiNovikov2019}, we have $\abs{f_i}\leq 2$ on $\cell$.
	In the same way, by first taking the cells $\cell$ to admit the larger extension $\ext*{\ext{\cell}{\boldsymbol{\delta}}}{\boldsymbol{{\delta}}}$, we may assume that $\abs{f_i}\leq 2$ on $\ext{\cell}{\boldsymbol{\delta}}$.
\end{remark}

Keeping the same notation as above, we have the following lemma.

\begin{lemma}
	\label{lem: upper bound}
	Fix $N,d\in\nNatural$ and let $f$ be an $(m+1)$-tuple of functions among $f_1,\dots,f_n$.
	For simplicity, assume these are $f_1,\dots,f_{m+1}$.
	One may choose parameters $t,\delta^{-1}=\polyfd[\ell][d,\log N]$ such that there exists a non-zero polynomial $P\in\zIntegers[\xi_1,\dots,\xi_{m+1}]$ of degree at most $d$ and with coefficients of size at most $N+1$, for which the function $\log(\abs{P\comp f})$, where defined, is bounded above by
	\begin{equation}
		\label{eq: log upper bound}
		\order[\ell]{\log N+d}-\order*[\ell]{1}\cdot d(\log N)^{\frac{1}{m+1}}
	\end{equation}
	on $\cell$.
\end{lemma}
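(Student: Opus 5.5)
Following Wilkie's approach (cf.\ \cite[Appendix B]{BinyaminiNovikov2019}), we construct $P$ via \Cref{lem: siegel} applied to Taylor coefficients, and then pass from smallness of these coefficients to smallness on $\cell$ via the geometry of $\ext{\cell}{\boldsymbol{\delta}}$. Consider the $\binom{d+m+1}{m+1}\sim d^{m+1}$ monomials $f^\alpha=f_1^{\alpha_1}\cdots f_{m+1}^{\alpha_{m+1}}$ with $\abs{\alpha}\leq d$. These are holomorphic on $\ext{\cell}{\boldsymbol{\delta}}$ and, by \Cref{rem: small functions in extension}, bounded by $2^d$ there. We fix a base point $p\in\cell$ and expand each $f^\alpha$ around $p$ in the natural cellular coordinates: power series in the $m$ disc coordinates and Laurent-type expansions in the non-disc (annular, punctured, or point) coordinates. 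For a truncation order $K$ to be chosen, we take the matrix $A$ whose columns are indexed by $\alpha$ and whose rows are indexed by the Taylor--Laurent coefficients of multi-order at most $K$ in the disc variables. The number of these is about $K^m$, up to a factor for the non-disc coordinates that we control below. By Cauchy estimates on $\ext{\cell}{\boldsymbol{\delta}}$, every entry is bounded by $2^d$, so $\norm{A}_\infty\leq d^{m+1}2^d$.

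We choose $K$ so that the number of rows is at most half the number of columns, i.e.\ $K^m\approx d^{m+1}/\order[\ell]{1}$. \Cref{lem: siegel} then gives $0\neq v\in\zIntegers^{\#\alpha}$ with entries at most $N+1$ whose low-order coefficients are bounded by $N\cdot N^{-2}\cdot d^{m+1}2^d$, i.e.\ $e^{\order[\ell]{d}}/N$. We set $P=\sum v_\alpha\xi^\alpha$. By construction, the high-order part of the expansion of $P\comp f$, of disc-order greater than $K$, is bounded on $\cell$ by $(N+1)d^{m+1}2^d\cdot\delta^{K}$. The low-order part is bounded by $e^{\order[\ell]{d}}N^{-1}\cdot\order{1}^{K}$. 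The exponent $1/(m+1)$ in \eqref{eq: log upper bound} comes from optimizing $K$ and the number of coefficients against $d$ and $N$. A cleaner route matching \eqref{eq: log upper bound} is to instead take $K\approx d(\log N)^{1/(m+1)}\cdot(\text{const})$, use rows up to order $K$ so that $\#\text{rows}\approx K^m\ll\#\text{cols}\cdot\log N$ fails only mildly, and pick $\delta=e^{-\order{1}}$ so that $\delta^K$ yields $-\order*[\ell]{1}d(\log N)^{1/(m+1)}$ in the logarithm. Choosing $\delta^{-1}=\poly(d,\log N)$ if needed keeps the first term $\order[\ell]{\log N+d}$.

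The main obstacle is the non-disc coordinates, since there could be up to $\ell-m$ of them and they must not enter the count of rows. Here we use \Cref{lem: thin annuli}: their extension parameter is $\delta^t$ rather than $\delta$. Expanding in such a coordinate $w$ about the core circle of the annulus, the Laurent coefficients of order $j$ decay like $\delta^{t\abs{j}}$ relative to the size on $\ext{\cell}{\boldsymbol{\delta}}$. Taking $t\approx K$, which is polynomial in $d$ and $\log N$, we may truncate each such coordinate at order $0$: all coefficients of nonzero order in non-disc variables are already below $e^{-K}$ times $2^d(N+1)d^{m+1}$, which is absorbed into the bound. The rows therefore only count disc indices, about $K^m$ of them, and the dimension count behaves as if $\cell$ had dimension $m$. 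The technical work is to carry out the Cauchy estimates uniformly in the cellular (fibered) coordinates of $\cell$ inside $\ext{\cell}{\boldsymbol{\delta}}$. We would do this fiberwise, following \cite[Appendix B]{BinyaminiNovikov2019}, replacing global expansions by the standard fiberwise estimate $\abs{h}_{\cell}\leq \max(\text{low-order coefficient bound},\ \delta^{K}\abs{h}_{\ext{\cell}{\boldsymbol{\delta}}})$ applied inductively over the fibers.
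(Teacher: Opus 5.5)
Your final parameter choice is the paper's: $\delta$ a small constant depending on $\ell$, truncation order $t\asymp d(\log N)^{1/(m+1)}$, all terms with a non-zero non-disc exponent discarded thanks to the $\delta^t$-thin extension, and \Cref{lem: siegel} applied to the coefficients indexed by disc multi-indices. However, the step that produces the exponent $1/(m+1)$ is never carried out, and the computation you do write down does not prove the lemma. Taking at most half as many rows as columns only gives a low-order part of size $e^{O(d)}/N$. But $-\log N+O(d)$ is not bounded by \eqref{eq: log upper bound} once $d\gg(\log N)^{m/(m+1)}$ and $N$ is large, which is exactly the regime used later (with $N=H^d$, against the Liouville bound). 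What you need is the full exponent in \Cref{lem: siegel}. There are $\asymp_\ell d^{m+1}$ columns (the $\alpha$ with $\abs{\alpha}\le d$) and $\asymp_\ell t^m$ rows (disc multi-indices with $\abs{\beta}\le t$), so $\abs{\sum_\alpha\nu_\alpha c_{\alpha,\beta}}\le N\cdot N^{-\Omega_\ell(d^{m+1}/t^m)}\cdot 2^d(d+1)^{m+1}$. For $t=d(\log N)^{1/(m+1)}$ the gain $N^{-\Omega_\ell(d^{m+1}/t^m)}=e^{-\Omega_\ell(d(\log N)^{1/(m+1)})}$ has the same size as the tail $\delta^t$. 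This balance should replace your remark that $K^m\ll(\text{number of columns})\cdot\log N$ ``fails only mildly''.

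The second gap is the expansion itself. Expanding about a single base point $p$ does not control $f^\alpha$ on all of $\cell$. The fiber radii of a complex cell are holomorphic functions of the preceding variables, so $\cell$ need not lie in a polydisc about $p$ contained in $\ext{\cell}{\boldsymbol{\delta}}$, and Cauchy estimates at $p$ give no uniform bound on $\cell$. A fiberwise max-estimate as you describe also does not by itself give what \Cref{lem: siegel} needs: one fixed matrix of \emph{constant} coefficients controlling $P\comp f$ uniformly on $\cell$. The paper uses the global expansion $f^\alpha=\sum_{\beta\in\zIntegers^\ell}c_{\alpha,\beta}\varZ^{[\beta]}$ in normalized monomials from \cite[Proposition 29]{BinyaminiNovikov2019}. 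There $c_{\alpha,\beta}\in\rReal$, $\norm{\varZ^{[\beta]}}_{\cell}\le 1$, and $\abs{c_{\alpha,\beta}}<\boldsymbol{\delta}^{\operatorname{pos}(\beta)}\norm{f^\alpha}_{\ext{\cell}{\boldsymbol{\delta}}}\le\boldsymbol{\delta}^{\operatorname{pos}(\beta)}2^d$. The sum then splits into three parts:
\begin{enumerate}
\item $\abs{\beta}>t$, bounded using $\sum_{T>t}(2T+1)^\ell\delta^T=O_\ell(t^\ell)\delta^t$;
\item $\abs{\beta}\le t$ with some non-disc exponent non-zero, where each coefficient is at most $2^d\delta^t$ and there are at most $(2t+1)^\ell$ of them;
\item the pure-disc terms, handled by Siegel.
\end{enumerate}
The bound $\norm{\varZ^{[\beta]}}_{\cell}\le1$ also removes the factor $O(1)^K$ you attach to the low-order part. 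Had that factor really been present, it would be of the same exponential order as the Siegel gain. You would then have to take the constant in $K$ small and $\delta$ correspondingly smaller to absorb it.
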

\begin{proof}
Let $\alpha\in\nNatural^{m+1}$ with $\abs{\alpha}\leq d$.
We consider the Laurent expansion of $f^\alpha=\prod_i f_i^{\alpha_i}$ on~$\cell\subset\cComplex^\ell$ in terms of \emph{normalized monomials} (see~\cite[Section 3.2]{BinyaminiNovikov2019}):
\begin{equation}
	f^\alpha=\sum_{\beta\in \zIntegers^{\ell}} c_{\alpha,\beta}\cdot \varZ^{[\beta]}\qc c_{\alpha,\beta}\in\rReal.
\end{equation}
We have, by \cite[Proposition 29]{BinyaminiNovikov2019}, that $\norm{\varZ^{[\beta]}}_{\cell}\leq1$ and $\abs{c_{\alpha,\beta}}<\boldsymbol{\delta}^{\operatorname{pos}(\beta)}\cdot \norm{f^\alpha}_{\ext{\cell}{\boldsymbol{\delta}}}$.
Since each $f_i$ may be taken to be of size at most $2$ on $\ext{\cell}{\boldsymbol{\delta}}$ (see \Cref{rem: small functions in extension}), we may assume $\abs{c_{\alpha,\beta}}<\boldsymbol{\delta}^{\operatorname{pos}(\beta)}\cdot 2^d$.

Write $P=\sum_{\abs{\alpha}\leq d} \nu_\alpha \boldsymbol{\xi}^\alpha$, where $\boldsymbol{\xi}=(\xi_1,\dots,\xi_{m+1})$ and each $\nu_{\alpha}\in \zIntegers$ is of size at most~$N+1$.
We consider the expansion
\begin{equation}
	\label{eq: polynomial expansion all terms}
	P\comp f=\sum_{\abs{\alpha}\leq d}\nu_\alpha f^\alpha = \sum_{\substack{\abs{\alpha}\leq d,\\\beta\in\zIntegers^\ell}}\nu_\alpha \cdot c_{\alpha,\beta}\cdot\varZ^{[\beta]}.
\end{equation}
We wish to choose the coefficients $\nu_\alpha$, not all zero, such that \eqref{eq: polynomial expansion all terms} is uniformly small on $\cell$.
We split the right-hand side of \eqref{eq: polynomial expansion all terms} into three terms and bound each of them separately.
The three cases we consider are the following.
\begin{enumerate}
	\item Summands where $\abs{\beta}>t$;
	\item Summands where $\abs{\beta}\leq t$, and at least one of the non-disc coordinates of $\varZ$ appears with a non-zero exponent; and
	\item Summands where $\abs{\beta}\leq t$, and only disc coordinates of $\varZ$ appear with a non-zero exponent.
\end{enumerate}
All the bounds below are over $\cell$, on which we have $\vert{\varZ^{[\beta]}}\vert\leq 1$ for all $\beta$. 

A direct computation shows that
\begin{equation}
	\begin{split}
		\sum_{\abs{\beta}>t}  \delta^{\abs{\beta}} &< \sum_{\substack{T\in\nNatural\\T>t}} (2T+1)^\ell \delta^T\\
		&=O_\ell(t^\ell)\delta^t.
	\end{split}
\end{equation}
Therefore, for summands in \eqref{eq: polynomial expansion all terms} with $\abs{\beta}>t$, we have
\begin{align}
	\label{eq: high taylor coefficients bound}
	\begin{split}
		\Big\vert{\sum_{\substack{\abs{\alpha}\leq d,\\\abs{\beta}>t}}\nu_\alpha \cdot c_{\alpha,\beta}\cdot\varZ^{[\beta]}}\Big\vert
	&\leq 
	\sum_{\substack{\abs{\alpha}\leq d,\\\abs{\beta}>t}} N\cdot \delta^{\abs{\beta}}\cdot 2^d\\
	&\leq  (d+1)^{m+1} \cdot (N+1)\cdot 2^d \cdot O_\ell(t^\ell)\delta^t\\
	&=\polyl(N,t)\cdot\order[\ell]{1}^d\cdot\delta^t.
	\end{split}
\end{align}

For summands in \eqref{eq: polynomial expansion all terms} with $\abs{\beta}\leq t$, such that at least one of the non-disc coordinates of $\varZ$ appears with a non-zero exponent, we have that $\boldsymbol{\delta}^{\operatorname{pos}(\beta)}$ is, by definition of $\boldsymbol{\delta}$, at most $\delta^{t}$.
We may thus also bound this part of \eqref{eq: polynomial expansion all terms} by
\begin{equation}
	\label{eq: big extension bound}
	(d+1)^{m+1}\cdot (2t+1)^\ell\cdot (N+1)\cdot 2^d\cdot\delta^t=\polyl(N,t)\cdot\order[\ell]{1}^d\cdot\delta^t.
\end{equation}
It is in this part where we crucially use the theory of complex cells and the fact that we may take the extension parameter of non-disc coordinates to be exponentially smaller than in the disc coordinates.

It thus remains to bound the part of \eqref{eq: polynomial expansion all terms} corresponding to $\abs{\beta}\leq t$, where only variables corresponding to one of the $m$ disc fibers of $\cell$ may appear with a non-zero exponent.
We assume for convenience that these are the coordinates $z_1,\dots,z_m$ and that $\beta\in\nNatural^m$.
We have
\begin{align}
	\Big\vert{\sum_{\substack{\abs{\alpha}\leq d,\\\abs{\beta}\leq t}}\nu_\alpha \cdot c_{\alpha,\beta}\cdot\varZ^{[\beta]}}\Big\vert 
	&\leq   (t+1)^{m} \max_{\abs{\beta}\leq t}\Big\vert{\sum_{\abs{\alpha}\leq d}\nu_\alpha \cdot c_{\alpha,\beta}}\Big\vert 
\end{align}
and so we wish to bound each of the sums $\Big\vert{\sum_{\abs{\alpha}\leq d}\nu_\alpha \cdot c_{\alpha,\beta}}\Big\vert$ for all $\abs{\beta}\leq t$.
There are $\binom{d+m+1}{m+1}\sim d^{m+1}$ choices for vectors $\alpha$ with $\abs{\alpha}\leq d$, up to multiplicative constants depending on $\ell$. 
Similarly, there are $\binom{t+m}{m}\sim t^m$ sums that we are trying to simultaneously bound, up to multiplicative constants depending on $\ell$.

Applying \Cref{lem: siegel} to the transpose of the matrix $(c_{\alpha,\beta})_{\alpha,\beta}$, we find coefficients $\nu_\alpha$ of size at most $N+1$, not all of them zero, such that
\begin{equation}
	\label{eq: siegel application}
	\Big\vert{\sum_{\abs{\alpha}\leq d}\nu_\alpha \cdot c_{\alpha,\beta}}\Big\vert \leq N \cdot N^{-\order*[\ell]{1}\cdot\frac{d^{m+1}}{t^m}} \cdot 2^d \cdot (d+1)^{m+1}
\end{equation}
for all $\abs{\beta}\leq t$.

Choosing $\delta=\order[\ell]{1}$ and $t=d(\log N)^{\frac{1}{m+1}}$, the estimates \eqref{eq: high taylor coefficients bound} to \eqref{eq: siegel application} give
\begin{equation}
	\label{eq: bounds before logs}
	\abs{P \comp f}
	\leq
	\polyl(N)\cdot\order[\ell]{1}^d\cdot e^{-\order*[\ell]{1} \cdot d(\log N)^{\frac{1}{m+1}}}.
\end{equation}
Taking logarithms in \eqref{eq: bounds before logs} now yields \eqref{eq: log upper bound}.
\end{proof}

In the notation of \Cref{lem: upper bound} above, we compare the upper bound \eqref{eq: log upper bound} with the following Liouville lower bound for $\abs{P\comp f}$.
\begin{lemma}[{\cite[Lemma 14]{Habegger2018}}]
	\label{lem: Liouville bound}
	Assume $\xi_1,\dots,\xi_{m+1}\in\overline{\qRationals}$ are of degree at most~$g$ and multiplicative Weil height at most $H$.
	Then either $P(\boldsymbol{\xi})=0$, or else
	\begin{equation}
		\label{eq: liouville inequality}
		\abs{P(\boldsymbol{\xi})} \geq (d^{m+1} N H ^{d(m+1)})^{-g}.
	\end{equation}
\end{lemma}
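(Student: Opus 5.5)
The plan is the classical Liouville argument: bound $\abs{P(\boldsymbol{\xi})}$ below through the norm of $\boldsymbol{\xi}$ from $K=\qRationals(\boldsymbol{\xi})$ down to $\qRationals$, which is a non-zero rational of controlled denominator. Set $K=\qRationals(\xi_1,\dots,\xi_{m+1})$, so $[K:\qRationals]\leq g$. Assume $P(\boldsymbol{\xi})\neq 0$ and let $\alpha=P(\boldsymbol{\xi})\in K^\times$. The product formula gives $\prod_{v}\abs{\alpha}_v^{d_v}=1$ over all places $v$ of $K$, with local degrees $d_v=[K_v:\qRationals_v]$ normalized so that $\sum_{v\mid\infty}d_v=[K:\qRationals]$. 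Fix the archimedean place $v_0$ corresponding to the given embedding $K\subset\rReal$ (or $\cComplex$), at which we want the lower bound. Then
\begin{equation}
	\abs{\alpha}_{v_0}^{d_{v_0}}=\prod_{v\neq v_0}\abs{\alpha}_v^{-d_v},
\end{equation}
so it suffices to bound $\abs{\alpha}_v$ from above at every other place.

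At a non-archimedean place $v$, the ultrametric inequality and the integrality of the coefficients $\nu_\beta$ give $\abs{P(\boldsymbol{\xi})}_v\leq\max_{\abs{\beta}\leq d}\abs{\boldsymbol{\xi}^\beta}_v\leq\prod_i\max(1,\abs{\xi_i}_v)^{d}$. At an archimedean place $v$, the triangle inequality over the at most $\binom{d+m+1}{m+1}\leq (d+1)^{m+1}$ monomials, each coefficient of size at most $N+1$, gives $\abs{P(\boldsymbol{\xi})}_v\leq (d+1)^{m+1}(N+1)\prod_i\max(1,\abs{\xi_i}_v)^{d}$. The same bound also holds at $v_0$, which is harmless.

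Now multiply these bounds over $v\neq v_0$ with weights $d_v$. The height factors combine to at most $\prod_i H(\xi_i)^{d[K:\qRationals]}\leq H^{d(m+1)g}$, using $H(\xi_i)^{[K:\qRationals]}=\prod_v\max(1,\abs{\xi_i}_v)^{d_v}$ and the fact that each factor is $\geq 1$, so dropping $v_0$ only helps. The archimedean constants contribute at most $\bigl((d+1)^{m+1}(N+1)\bigr)^{g}$. This yields
\begin{equation}
	\abs{\alpha}_{v_0}\geq\abs{\alpha}_{v_0}^{d_{v_0}}\geq\bigl((d+1)^{m+1}(N+1)H^{d(m+1)}\bigr)^{-g}.
\end{equation}
The first inequality needs $\abs{\alpha}_{v_0}\leq 1$; when $\abs{\alpha}_{v_0}>1$ the claimed bound is trivial. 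The case $d_{v_0}=2$ (complex place) is handled the same way.

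The only delicate point is the constants. The statement has $d^{m+1}N$ rather than $(d+1)^{m+1}(N+1)$, and depending on how "height" is defined (multiplicative height of the tuple versus the product of the coordinate heights) the exponent is $d(m+1)$ or $d$. I would either count monomials more carefully, or simply note that these discrepancies are absorbed into constants depending on $\ell$ and are irrelevant for comparison with \eqref{eq: log upper bound}. Since the statement is cited from \cite[Lemma 14]{Habegger2018}, I would take it as given and use this sketch only as justification of the stated shape.
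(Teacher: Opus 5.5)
Your argument is correct: it is the standard product-formula proof of Liouville's inequality, which is the usual route to the cited \cite[Lemma 14]{Habegger2018}, and the paper gives no proof of its own beyond that citation. You read the hypothesis as $[\qRationals(\boldsymbol{\xi}):\qRationals]\leq g$, which matches the definition of $X(g,H)$. With that reading all your steps go through and give the lower bound $\bigl((d+1)^{m+1}(N+1)H^{d(m+1)}\bigr)^{-g}$.

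One correction to your last paragraph. Your first option, recovering the literal constant $d^{m+1}N$ by counting monomials more carefully, cannot work, because \eqref{eq: liouville inequality} as displayed fails for small parameters. Take $m=0$, $d=1$, $N=1$, $g=2$, $\xi_1=\sqrt{2/3}$ and $P=1-\xi_1$. The minimal polynomial of $\xi_1$ is $3x^2-2$, so $H(\xi_1)=\sqrt{3}$. Then $\abs{P(\xi_1)}=1-\sqrt{2/3}\approx 0.18$, while the right-hand side of \eqref{eq: liouville inequality} is $1/3$. The failure comes from the archimedean factor at the conjugate place, $1+\sqrt{2/3}\approx 1.82>1=d^{m+1}N$. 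That factor is exactly what your monomial count times the coefficient bound controls, so $(d+1)^{m+1}(N+1)$ is the honest constant.

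Your second option is therefore the right resolution. Only \eqref{eq: log lower bound} is used downstream, and your bound gives $\log\abs{P(\boldsymbol{\xi})}\geq -g\bigl((m+1)\log(d+1)+\log(N+1)+d(m+1)\log H\bigr)\geq -O_\ell(1)\cdot g\,(\log N+d\log H)$ whenever $H\geq 2$. That is all the comparison with \eqref{eq: log upper bound} requires.
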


When $P(\boldsymbol{\xi})\ne 0$ in \Cref{lem: Liouville bound}, taking logarithms in~\eqref{eq: liouville inequality} gives
\begin{equation}
	\label{eq: log lower bound}
	\begin{split}
		\log(\abs{P(\boldsymbol{\xi})}) &\geq -g((m+1)\log d+\log N+d(m+1)\log H)\\
		&\geq -\order*[\ell]{1}\cdot g\cdot(\log N+ d\log H).
	\end{split}
\end{equation}
Comparing \eqref{eq: log upper bound} and \eqref{eq: log lower bound}, a direct computation shows that ${P\comp f}$ must vanish on those points of $\cell$ where the components $f_1,\dots,f_{m+1}$ of $f$ are algebraic of degree at most $g$ and height at most $H$ if we set 
\begin{equation}
	N = H^d \qc d\geq \order*[\ell]{g^{m+1} (\log H)^m}.
\end{equation}
In particular, this may be done with $d \leq \order[\ell]{g^{m+1} (\log H)^m}$.
Composing $P$ with the coordinate functions on the $\LE$-cell $X$ which correspond to the components $f_1,\dots,f_{m+1}$ of $f$ finishes the proof of \Cref{thm: interpolation}. 
\qed

\subsubsection{The case of \texorpdfstring{$\Ranexp$}{Ran,exp}}

The proof of \Cref{thm:C} is similar to that of \Cref{thm: interpolation}.
We now explain the necessary changes.
First, we replace throughout the proof all bounds of the form $\poly_{\format}(\degree,\cdot)$ or $\order[\format]{1}$ by $\poly_{X}(\cdot)$ or $\order[X]{1}$, respectively.
In addition, \Cref{lem: thin annuli} is replaced by its $\Ran$ analogue~\cite[Lemma 94]{BinyaminiNovikov2019}.

The main point is the following.
Instead of \Cref{thm: single cell}, we use \Cref{thm: single cell Ranexp} to reduce to the case where the coordinate functions $x_1,\dots,x_n$ on each member $X_t$ of the family $X\subset T\cross [0,1]^n$ (or, more precisely, their restrictions to each element in a decomposition of $X_t$ into $\LE$-cells) are prepared via complex cells with respect to a common recursive $\log$--$\exp$-scale.
We now proceed as in the proof of \Cref{thm: interpolation}, for each set $X_t$ separately --- this ensures that we construct interpolation hypersurfaces for each $X_t$ rather than just for those $t$ which are themselves algebraic of bounded height and degree.

The uniform bounds (over $T$) on the number of resulting $\LE$-cells and the lengths of the relevant recursive $\log$--$\exp$-scales, as well as the uniform bounds on the sufficient sizes of $t,\delta^{-1}$ in \Cref{rem: small functions in extension}, yield uniform bounds for the number of resulting algebraic hypersurfaces and their degrees, which concludes the proof of \Cref{thm:C}. \qed

\bibliographystyle{abbrv}
\bibliography{references}
\end{document}